\newtheorem{thm}{Theorem}[section]
\newtheorem{lemma}[thm]{Lemma}
\newtheorem{cor}[thm]{Corollary}
\newtheorem{prop}[thm]{Proposition}
\newtheorem*{thm*}{Theorem}
\newtheorem*{prop*}{Proposition}
\theoremstyle{remark}
\newtheorem{rk}[thm]{Remark}
\newtheorem*{claim}{Claim}
\newtheorem{constr}[thm]{Construction}
\newtheorem{warn}[thm]{Warning}
\newtheorem{ex}[thm]{Example}
\theoremstyle{definition}
\newtheorem{defi}[thm]{Definition}
\newtheorem{lud}[thm]{Definition and Lemma}
\newtheorem{dac}[thm]{Definition and Corollary}
\numberwithin{equation}{section}
\newcommand{\Ho}{\textup{Ho}}
\newcommand{\HO}{\mathscr{H}\kern-2.5pt\textit{o}}
\newcommand{\Yo}{\mathscr{Y}\kern-4pt\textit{o}}
\newcommand{\HOinf}{\ensuremath{\HO_\infty}}
\newcommand{\HOcof}{\ensuremath{\HO_{\textup{cof}}}}
\newcommand{\HOrel}{\ensuremath{\HO_{\textup{rel}}}}
\newcommand{\HOD}{\ensuremath{\HO^D_{\textup{cof}}}}
\newcommand{\HOND}{{{\HO_{\text{N}}^D}}}
\newcommand{\nerve}{\textup{N}}
\newcommand{\h}{\textup{h}}
\newcommand{\cat}[1]{\textbf{\textup{#1}}}
\newcommand{\op}{{\textup{op}}}
\newcommand{\diag}{{\textup{diag}}}
\newcommand{\blank}{{\textup{--}}}
\newcommand{\pr}{{\textup{pr}}}
\newcommand{\Hom}{{\textup{Hom}}}
\newcommand{\id}{{\textup{id}}}
\newcommand{\ev}{{\textup{ev}}}
\newcommand{\im}{{\textup{im}}}
\newcommand{\incl}{{\textup{incl}}}
\newcommand{\colim}{\mathop{\textup{colim}}}
\newcommand{\forget}{\mathop{\textup{forget}}}
\def\twocell[#1]{\arrow[#1, dash, phantom, "\Rightarrow"{scale=1.125, yshift=-.4pt, description, allow upside down, sloped, inner sep=0pt}]}
\begin{document}
\title[Homotopy (Pre-)Derivators$\dots$]{Homotopy (Pre-)Derivators of\\ Cofibration Categories and Quasi-Categories}
\author{Tobias Lenz}
\address{Mathematisches Institut, Rheinische Friedrich-Wilhelms-Universit\"at Bonn, Endenicher Allee 60, 53115 Bonn, Germany \& Max-Planck-Institut für Mathematik, Vivatsgasse 7, 53111 Bonn, Germany}
\email{lenz@math.uni-bonn.de}
\subjclass[2010]{Primary 55U35}

\begin{abstract} We prove that the homotopy prederivator of a cofibration category is equivalent to the homotopy prederivator of its associated \emph{quasi-category of frames}, as introduced by Szumi\l{}o. We use this comparison result to deduce various abstract properties of the obtained prederivators.
\end{abstract}
\maketitle

\section*{Introduction}
There are various models for abstract homotopy theory, which roughly can be classified into three approaches.

The oldest approach is \emph{homotopical algebra}, a field of study beginning with Quillen's monograph \cite{quillen}. Here the most basic object of study are categories with a subclass of morphisms, called \emph{weak equivalences}, which we would like to think of as invertible. A classical example are the weak homotopy equivalences of topological spaces or the quasi-isomorphisms of chain complexes. 

However, on their own these categories are almost completely intractable, and so the usual models enhance them with additional structure, for example to \emph{cofibration categories} or \emph{model categories}. These additional structures allow one to do actual calculations, and until now the models from homotopical algebra remain the most suited for this.

The approach currently most investigated is \emph{higher category theory}. Here we study ``categories,'' that also have a notion of homotopies between maps, ``higher homotopies between homotopies'' etc. Usually in this approach compositions (and sometimes also identities) are only defined up to ``a coherent system of (higher) homotopies,'' a notion to be made precise by each model individually. Important examples are \emph{quasi-categories} (also known as \emph{$\infty$-categories}) \cite{joyal, htt} and \emph{complete Segal spaces} \cite{segal-spaces}. These models are the best suited for developing an abstract theory of homotopy (co-)limits, because they have built in by definition a theory of \emph{homotopy coherent} diagrams, whereas in homotopical algebra we usually work with strictly commutative diagrams.

Finally, a third approach is provided by \emph{derivators}---introduced by Grothendieck in \cite{derivateurs} as a solution to the notorious issues of triangulated categories---and their various flavours. A \emph{prederivator} $\mathscr D$ is simply a strict $2$-functor $\cat{Cat}\to\cat{CAT}$ which we should imagine mapping a small category $I$ to the appropriate notion of a ``homotopy category of $I$-shaped diagrams'' in some homotopy-theoretic object captured by $\mathscr D$, together with the various restriction functors. A \emph{derivator} is a prederivator that satisfies some additional axioms, that can be thought of as capturing a basic theory of ``homotopy Kan extensions.'' There are also one-sided variants of this: Grothendieck defines \emph{right derivators} allowing a theory of left homotopy Kan extensions (sic) and dually \emph{left derivators}. Derivators are a powerful tool for establishing formulas involving homotopy (co-)limits.

Now it is a natural question to ask whether all these different models are equivalent. This is indeed true (in a precise sense) for ``all models of higher category theory'' by a result of To\"en, cf.~\cite{toen}. One shouldn't however expect that models from different approaches are in general equivalent, even when interpreted in a sensible manner: it seems for example too strong that all complete and cocomplete quasi-categories arise from model categories (and, more severely, the same is to be expected for adjunctions between them), although this is true under additional assumptions, cf.~\cite[Proposition A.3.7.6]{htt}.

But To\"en's result is even stronger: all the models from higher category theory are equivalent in a way that is ``{essentially unique up to sign}'': more precisely each of these models is a higher categorical object itself, and therefore one can define its \emph{space of automorphisms}. To\"en proved that all these spaces are (up to homotopy) discrete with two points, and for quasi-categories the non-trivial component is represented by the assignment $\mathscr C\mapsto\mathscr C^\op$.

In many cases there are classical constructions relating models to each other (also between different approaches), some of which are shown in the diagram below. In the case of higher category theory they are by the above compatible up to sign and equivalence, and in practical cases they are actually known to be compatible up to equivalence. So we can ask the following question: \emph{are also the remaining preferred constructions compatible up to equivalence?}
\par\bigskip
\begin{tikzcd}[row sep=large, column sep=large]
\text{simplicial model categories} \arrow[d, "(\text{--})_c"'] \arrow[rd, "\nerve_\Delta"', bend left=15pt]\\
\text{cofibration categories} \arrow[d, "\text{forget}"'] \arrow[r, "\nerve_f"] & \text{quasi-categories}\arrow[r,"\HOinf"] & \text{prederivators}\\
\text{categories with weak equivalences} \arrow[ru, "\text{quasi-localization}"'{description}, bend right=15pt] \arrow[rru, "\HOrel"', bend right=20pt]
\end{tikzcd}

\subsection*{Own results}
In this paper we investigate the above question in the case of the homotopy prederivators associated to cofibration or quasi-categories. Our main result is the following:

\begin{thm*}
Let $\mathscr C$ be a cofibration category. Then there exists an equivalence $\HOrel(\mathscr C)\simeq\HOinf(\nerve_f(\mathscr C))$, pseudonatural in $\mathscr{C}$.
\end{thm*}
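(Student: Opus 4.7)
The plan is to construct a levelwise equivalence
\[
\HOrel(\mathscr C)(I) = \Ho(\mathscr C^I, W_I) \;\simeq\; \h\bigl(\nerve_f(\mathscr C)^{\nerve(I)}\bigr) = \HOinf(\nerve_f(\mathscr C))(I)
\]
for each small category $I$ by a single natural recipe, and then to upgrade this to a pseudonatural equivalence of prederivators by verifying naturality in $I$ (for the underlying morphism-of-prederivators structure) and in $\mathscr C$.

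For the comparison at fixed $I$, I would exploit the canonical inclusion $\nerve(\mathscr C)\hookrightarrow \nerve_f(\mathscr C)$ sending each object to its constant frame. Post-composing with this embedding turns a strict diagram $I\to\mathscr C$ into a homotopy-coherent diagram $\nerve(I)\to\nerve_f(\mathscr C)$, yielding a functor $\Phi_I\colon\mathscr C^I\to \nerve_f(\mathscr C)^{\nerve(I)}$ at the level of $0$-simplices. Since levelwise weak equivalences in $\mathscr C^I$ map to pointwise equivalences in the quasi-category $\nerve_f(\mathscr C)$, this functor sends $W_I$ into isomorphisms in the target homotopy category, so $\Phi_I$ descends to the desired comparison $\bar\Phi_I\colon \Ho(\mathscr C^I, W_I)\to \h(\nerve_f(\mathscr C)^{\nerve(I)})$.

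The substantive task is showing each $\bar\Phi_I$ is an equivalence. The cleanest strategy is to factor $\bar\Phi_I$ through a frame-cotensor comparison
\[
\nerve_f(\mathscr C^I) \;\simeq\; \nerve_f(\mathscr C)^{\nerve(I)},
\]
followed by Szumi\l{}o's theorem $\h(\nerve_f(\mathscr D))\simeq\Ho(\mathscr D)$ for a cofibration category $\mathscr D$, applied to $\mathscr D=\mathscr C^I$. The cotensor comparison should be fairly direct at the level of $n$-simplices, since both sides are built from coherent diagrams in $\mathscr C$ indexed (essentially) by $\Delta^n\times\nerve(I)$. The main obstacle is that $\mathscr C^I$ is not a cofibration category for arbitrary $I$, so Szumi\l{}o's theorem does not immediately apply. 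I would sidestep this by first handling direct $I$, where $\mathscr C^I$ inherits a pointwise cofibration structure with levelwise weak equivalences, and then extending to general $I$ via a direct resolution $\tilde I\to I$, using that both sides of the claimed equivalence admit enough descent along such a resolution to reduce to the direct case; alternatively, one can invoke a homotopy-invariance property of the assignment $K\mapsto \nerve_f(\mathscr C)^K$ in its simplicial argument and combine it with the fact that $\nerve(\tilde I)\to\nerve(I)$ is a weak equivalence.

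Once $\bar\Phi_I$ is known to be an equivalence for every $I$, strict $2$-naturality in $I$ is built into the construction, since each ingredient commutes on the nose with restriction along functors $J\to I$. Pseudonaturality in $\mathscr C$ follows similarly: the constant-frame embedding, the functor-category formation, the frame-cotensor comparison, and Szumi\l{}o's equivalence are all natural in cofibration functors $\mathscr C\to\mathscr C'$, with any non-strictness localized to the choice of rectification in the direct-case step.
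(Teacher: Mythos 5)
The overall shape of your argument---resolve $I$ by a direct category, invoke the Szumi\l{}o--Kapulkin cotensor comparison and Szumi\l{}o's equivalence $\h\nerve_f(\mathscr D)\simeq\Ho(\mathscr D)$---matches the paper's strategy, but there are two problems. First, the ``constant frame embedding'' $\nerve(\mathscr C)\to\nerve_f(\mathscr C)$ you start from does not exist as a simplicial map: an edge of $\nerve_f(\mathscr C)$ is a \emph{Reedy cofibrant} homotopical diagram $D[1]\to\mathscr C$, and the pullback $p^*X$ of a strict diagram is homotopical but essentially never Reedy cofibrant (for an arrow $f\colon X\to Y$ one would need $X\amalg Y\to Y$ to be a cofibration). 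So your $\Phi_I$ is undefined as stated; the paper is forced into a zig-zag of comparison maps, through $\Ho(\mathscr C^{DI})$ and $\h\nerve_f(\mathscr C^{DI}_{\text{R}})$ together with an auxiliary non-Reedy variant $\nerve_h$, precisely because choosing Reedy replacements would destroy strict functoriality. Second, a minor point: with the paper's (strong) definition of cofibration category, $\mathscr C^I$ \emph{is} a cofibration category for every small $I$ by Theorem~\ref{thm:abc-diagram}, so that particular worry is unfounded---although the resolution $DI\to I$ is still needed, because the Szumi\l{}o--Kapulkin comparison of Theorem~\ref{prop:phi-equiv} concerns the Reedy structure on $\mathscr C^{DI}$, not the levelwise structure on $\mathscr C^I$.

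The serious gap is $2$-naturality in $I$. A prederivator is a strict $2$-functor $\cat{Cat}^\op\to\cat{CAT}$, and a (pseudonatural) morphism of prederivators must be compatible not only with restriction along functors $u\colon J\to I$ but also with the images of natural transformations $\tau\colon f\Rightarrow g$. Your levelwise equivalences pass through the assignments $I\mapsto\Ho(\mathscr C^{DI})$ and $I\mapsto\h\nerve_f(\mathscr C^{DI}_{\text{R}})$, which carry no evident action on $2$-cells at all; one must construct such actions (Constructions~\ref{constr:homotopic-eval} and~\ref{constr:hond-trafo} in the paper) and then verify that every comparison map intertwines them with the canonical actions on $\Ho(\mathscr C^I)$ and $\h\big(\nerve_f(\mathscr C)^{\nerve I}\big)$. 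This verification, carried out with mates and large pasting diagrams, is the bulk of the paper's proof (Propositions~\ref{prop:2-cells-natural}, \ref{prop:hond-vs-hod}, and~\ref{prop:final-equivalence}) and is not ``built into the construction'': commuting on the nose with restriction functors only gives $1$-naturality. Finally, pseudonaturality in $\mathscr C$ is obtained in the paper by checking that the whole zig-zag is strictly natural in exact functors and then inverting the wrong-way equivalence via Lemma~\ref{lemma:der-quasi-inverse}; your sketch does not explain how the rectification choices in your direct-case step are to be made coherently in $\mathscr C$.
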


We will then use this result to deduce several properties of the obtained prederivators. In particular we will give a full proof of the following, for which an argument had previously been sketched in \cite{hoder-quasicat}:

\begin{thm*}
Let $\mathscr C$ be a complete and cocomplete quasi-category. Then its homotopy prederivator $\HOinf({\mathscr C})$ is a derivator.
\end{thm*}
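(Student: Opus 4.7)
The strategy is to reduce the statement to its analogue on the cofibration category side by combining the main comparison theorem with a rectification result. Given a complete and cocomplete quasi-category $\mathscr{C}$, I would first invoke Kapulkin--Szumi\l{}o-style rectification to produce a cofibration category $\mathscr{D}$ together with an equivalence $\nerve_f(\mathscr{D}) \simeq \mathscr{C}$. Combining with the main theorem yields a pseudonatural equivalence $\HOinf(\mathscr{C}) \simeq \HOrel(\mathscr{D})$, and since the derivator axioms are invariant under pseudonatural equivalences of prederivators, it suffices to verify them for $\HOrel(\mathscr{D})$.

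For $\HOrel(\mathscr{D})$, the axioms (Der1) and (Der2) are essentially formal: the product axiom is handled because $\HOrel$ sends products of relative categories to products of homotopy categories, and pointwise detection of isomorphisms reduces to pointwise detection of weak equivalences in the diagram cofibration categories $\mathscr{D}^I$. The substantive content lies in (Der3) and (Der4): the existence of homotopy Kan extensions together with their base change formulas. The existence and Beck--Chevalley property of homotopy left Kan extensions in a cocomplete cofibration category is essentially classical, going back to Cisinski; this yields the left half of the derivator structure on $\HOrel(\mathscr{D})$ and hence, via the equivalence above, on $\HOinf(\mathscr{C})$.

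For the right half---homotopy right Kan extensions together with the dual base change---I would pass to the opposite quasi-category $\mathscr{C}^\op$, which is again complete and cocomplete. Applying the same argument to $\mathscr{C}^\op$ and invoking the natural identification $\HOinf(\mathscr{C}^\op) \simeq \HOinf(\mathscr{C})^\op$ translates the left derivator structure so obtained into the right derivator structure on $\HOinf(\mathscr{C})$, completing the verification.

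The main obstacle is the rectification step in the presence of genuine completeness. Cofibration categories are a fundamentally one-sided notion, carrying homotopy colimits but not necessarily homotopy limits, so presenting a complete quasi-category by a single such structure is subtle; the cleanest route is therefore the two-sided one above, where the left and right halves of the derivator axioms are verified via separate cofibration category presentations of $\mathscr{C}$ and $\mathscr{C}^\op$. A second, more technical point is to ensure that the rectification and the comparison equivalence of the main theorem are functorial enough to interact compatibly with the formation of the diagram categories appearing in the axioms, so that (Der3) and (Der4)---which are statements about arbitrary functors between small categories---genuinely transfer across the equivalence rather than merely at the level of underlying homotopy categories.
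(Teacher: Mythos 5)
Your proposal is correct and follows essentially the same route as the paper: rectify $\mathscr{C}$ as $\nerve_f(\mathscr{D})$ for a cofibration category $\mathscr{D}$ via Szumi\l{}o's equivalence, transfer the comparison theorem to get $\HOinf(\mathscr{C})\simeq\HOcof(\mathscr{D})$, invoke Cisinski's theorem that homotopy prederivators of cofibration categories are right derivators, and obtain the other half by applying the same argument to $\mathscr{C}^{\op}$. The only (harmless) cosmetic difference is that you unfold the verification of the individual axioms for $\HOcof(\mathscr{D})$, which the paper delegates wholesale to Cisinski, and note that the rectification step only ever needs cocompleteness since completeness is handled entirely by the duality.
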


\subsection*{Previous results}
Some previous compatibility results of the above type are already known. More specifically, the work of Dwyer and Kan basically shows that the ``big'' triangle on the left commutes up to equivalence, i.e.~for every simplicial model category $\mathscr C$ the quasi-category $\nerve_\Delta({\mathscr C}^\circ)$ is a quasi-localization of the underlying category with weak equivalences, cf.~\cite{dk, dk-modern}. Szumi\l{}o and Kapulkin have proven the analogous result for the lower left triangle, cf.~\cite{frame-loc}.

We remark however, that the lower right triangle does \emph{not} commute, as we will argue in Example~\ref{ex:not-strong}.

\subsection*{Technical assumptions} We assume Grothendieck's Axiom of Universes and fix a sequence $\textbf{U}\in\textbf{V}\in\textbf{W}$ of universes. We will refer to $\textbf{U}$-small sets as ``sets'' and to $\textbf{V}$-small sets as ``large sets''; analogous terminology will be used for simplicial sets etc.

We use the term ``small category'' for a $\textbf{U}$-small category (i.e.~a category with a ``set'' of objects) and ``category'' for a $\textbf{V}$-small category. A $\textbf{W}$-small category will be called a ``large category''; again we will employ analogous terminology for quasi-categories etc.

We remark that our categories are not assumed to be locally small.

\subsection*{Acknowledgements} The results in this article were obtained as part of the author's master thesis at the University of Bonn. He would like to thank his advisor Stefan Schwede for posing this topic, for various helpful discussions, and for several useful remarks on a previous version of this article. Moreover, the author would like to thank the anonymous referee for suggestions improving the exposition of this paper.

This article was prepared for publication while the author was at the Max Planck Institute for Mathematics and he would like to thank them for their hospitality and support.

\section{A short review of abstract homotopy theory}

\subsection{Some terminology} We assume familiarity with the basic theory of quasi-categories as presented in \cite{htt}. We will employ the same terminology and notation as provided there, except that we use the term \emph{quasi-categories} instead of \emph{$\infty$-categories}.

For convenience and definiteness we fix some terminology on categories with weak equivalences:

\begin{defi}
A \emph{category with weak equivalences} is a category $\mathscr C$ equipped with a wide subcategory $W$, called \emph{weak equivalences}, containing all isomorphisms and satisfying 2-out-of-3, i.e.~whenever we have a diagram
\begin{equation*}
A\xrightarrow{f} B\xrightarrow{g} C
\end{equation*}
such that two out of $f,g,gf$ are weak equivalences, then so is the third.

It is called a \emph{homotopical category} if $W$ satisfies 2-out-of-6, i.e.~given a diagram
\begin{equation*}
A\xrightarrow{f} B\xrightarrow{g} C\xrightarrow{h} D
\end{equation*}
such that $hg$ and $gf$ are weak equivalences, then so are $f,g,h,hgf$.

A functor $F\colon\mathscr C\to\mathscr D$ of (the underlying categories of) categories with weak equivalences is called \emph{homotopical} if it sends weak equivalences to weak equivalences.

We denote by \cat{CATWE} the large 2-category of categories with weak equivalences together with homotopical functors and all natural transformations between them, and by \cat{HCAT} the full 2-subcategory of homotopical categories.
\end{defi}

\begin{ex}
Let $\mathscr{C}$ be an ordinary category. Then there are two extreme ways to make $\mathscr{C}$ into a  homotopical category. Namely, on the one hand we can choose the \emph{minimal homotopical structure} and declare precisely the isomorphisms to be weak equivalences; on the other hand we can choose the \emph{maximal homotopical structure} and declare all morphisms to be weak equivalences. We will at several points make use of the latter and we will employ the notation $\widehat{\mathscr{C}}$ for this.
\end{ex}

\begin{ex}\label{ex:levelwise-we}
If $\mathscr{C}$ is a category with weak equivalences, and $I$ is any category, we can equip $\mathscr{C}^I$ with the \emph{levelwise weak equivalences} making it into a category with weak equivalences again, homotopical if $\mathscr{C}$ was. 

If $I$ is actually a category with weak equivalences on its own, we can consider the full subcategory of homotopical functors with the induced structure.

We will tacitly assume, that $\mathscr{C}^I$ is equipped with this structure, unless otherwise noted.
\end{ex}

Since we do not assume any sort of local smallness, any category with weak equivalences $(\mathscr C,W)$ admits a (strict) \emph{localization}, i.e. a functor $\gamma\colon\mathscr C\to\Ho(\mathscr C)$ sending all morphisms of $W$ to isomorphisms such that any other such functor $\mathscr C\to\mathscr D$ factors uniquely through $\gamma$.

\begin{rk}
In the spirit of category theory we should actually only ask for an ``essentially unique factorization up to isomorphism.'' However, the above variant simplifies notation and has the advantage that it gives rise to a \emph{strict} 2-functor $\Ho\colon\cat{CATWE}\to\cat{CAT}$ which will be relevant later (cf.~Definition~\ref{defi:homotopy-der}).
\end{rk}

\subsection{ABC cofibration categories}
\begin{defi}[R\u{a}dulescu-Banu]
An \emph{ABC cofibration category} is a category $\mathscr C$ together with two classes of morphisms, called \emph{weak equivalences} respectively \emph{cofibrations}, satisfying the following axioms (where the term \emph{acyclic cofibration} refers to a map that is both a cofibration and a weak equivalence):
\begin{enumerate}
\item $\mathscr C$ has an initial object $\varnothing$. We call an object $X$ \emph{cofibrant} if the unique map $\varnothing\to X$ is a cofibration. The chosen initial object $\varnothing$ is cofibrant. Cofibrations are stable under composition. All isomorphisms of $\mathscr C$ are weak equivalences. Moreover, if $f\colon X\to Y$ is an isomorphism such that $X$ is cofibrant, then $f$ is an acyclic cofibration (in particular the notion of cofibrancy is independent of the choice of initial object).
\item Weak equivalences satisfy $2$-out-of-$3$.
\item Given a solid arrow diagram
\begin{equation*}
\begin{tikzcd}
A \arrow[d,tail,"i"']\arrow[dr, phantom, "\ulcorner", very near end] \arrow[r] & C \arrow[d, dashed, "j"]\\
B \arrow[r, dashed] & D
\end{tikzcd}
\end{equation*}
with $A, C$ cofibrant and $i$ a cofibration, the pushout exists. Moreover, $j$ is a cofibration, acyclic if $i$ is.
\item Any morphism $f\colon X\to Y$ with $X$ cofibrant can be factored as $f=pi$ with $p$ a weak equivalence and $i$ a cofibration.\label{item:abc-factorization}
\item If $I$ is a set and $(f_i\colon X_i\to Y_i)$ is a family of maps with all $X_i$ cofibrant and all $f_i$ cofibrations, then the coproducts $\coprod_{i\in I} X_i,\coprod_{i\in I}Y_i$ exist. Moreover, $\coprod_{i\in I} f_i$ is a cofibration, acyclic if all the $f_i$ are.
\item Given a countable sequence
\begin{equation*}
\begin{tikzcd}
X_0 \arrow[r, tail, "f_0"]  & X_1 \arrow[r, tail, "f_1"] & X_2 \arrow[r, tail] & \cdots
\end{tikzcd}
\end{equation*}
with $X_0$ cofibrant and all $f_i$ cofibrations, the colimit $X_\infty$ exists. Moreover, the induced map $X_0\to X_\infty$ is a cofibration, acyclic if all the $f_i$ are.
\end{enumerate}
Dually, \emph{ABC fibration categories} are defined.
\end{defi}

The above notion is defined and extensively studied in \cite{cof}, where it appears as Definition 1.1.1, generalizing previous notions by Anderson \cite{anderson}, Brown \cite{brown}, and others. It is also studied under the name of a \emph{catégorie dérivable à droite homotopiquement cocomplète} in \cite{approximation}.

While the above generality will be needed later, we are mostly interested in the following special case:

\begin{defi}
A \emph{cofibration category} is an ABC cofibration category with all objects cofibrant. Dually, \emph{fibration categories} are defined.
\end{defi}

\begin{warn}\label{warn:cof}
This is stronger than the classical notion of cofibration categories, which omits the last two axioms.
\end{warn}

\begin{ex}
If $\mathscr{C}$ is an ABC cofibration category, then its full subcategory $\mathscr{C}_c$ of cofibrant objects is a cofibration category.
\end{ex}

\begin{prop}\label{prop:cof-approx-basis}
Let $\mathscr C$ be an ABC cofibration category. Then $\mathscr{C}_c\hookrightarrow\mathscr{C}$ descends to an equivalence $\Ho(\mathscr{C}_c)\to\Ho(\mathscr{C})$.
\begin{proof}
Cf.~\cite[Theorem 6.1.6-(1)]{cof}.
\end{proof}
\end{prop}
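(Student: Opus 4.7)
The plan is to construct a quasi-inverse $Q\colon\Ho(\mathscr C)\to\Ho(\mathscr C_c)$ via non-functorial cofibrant replacement and then enforce functoriality at the level of localizations. For each $X\in\mathscr C$, the factorization axiom applied to the map $\varnothing\to X$---whose source is cofibrant by the first axiom---produces a factorization $\varnothing\rightarrowtail QX\xrightarrow{q_X}X$ with $q_X$ a weak equivalence; if $X$ is already cofibrant, take $QX\mathrel{:=}X$ and $q_X\mathrel{:=}\id$. Essential surjectivity of the induced functor $\Ho(\mathscr C_c)\to\Ho(\mathscr C)$ is then immediate, and the family $\{q_X\}$ will eventually descend to one natural isomorphism of the desired equivalence.

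To define $Q$ on morphisms, let $f\colon X\to Y$ be a morphism of $\mathscr C$. The coproduct $QX\sqcup QY$ exists and is cofibrant by applying the coproduct axiom to the cofibrations $\varnothing\rightarrowtail QX$ and $\varnothing\rightarrowtail QY$. Factor the morphism $(fq_X,q_Y)\colon QX\sqcup QY\to Y$ as $QX\sqcup QY\rightarrowtail Z_f\xrightarrow{\sim} Y$ using the factorization axiom; then $Z_f$ is cofibrant, and by $2$-out-of-$3$ the canonical map $QY\to Z_f$ is a weak equivalence. Set $Q(f)\in\Ho(\mathscr C_c)(QX,QY)$ to be the zigzag $QX\to Z_f\xleftarrow{\sim}QY$.

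The main obstacle is to verify that $X\mapsto QX$, $f\mapsto Q(f)$ is a well-defined homotopical functor $\mathscr C\to\Ho(\mathscr C_c)$---i.e.\ independent of all choices, functorial in $f$, and sending weak equivalences to isomorphisms. Independence from the choice of $Z_f$ follows from a ``common refinement'' argument: given two factorizations $Z_f,Z_f'$, the pushout $W\mathrel{:=}Z_f\sqcup_{QX\sqcup QY}Z_f'$ exists by the pushout axiom (all relevant objects being cofibrant), and factoring the induced map $W\to Y$ as $W\rightarrowtail W'\xrightarrow{\sim}Y$ yields, by $2$-out-of-$3$ applied twice, that both $Z_f\to W'$ and $Z_f'\to W'$ are weak equivalences; thus the two candidate zigzags both represent $QX\to W'\xleftarrow{\sim}QY$ in $\Ho(\mathscr C_c)$. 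Functoriality is proved by the same pushout-and-refine technique applied to a comparison between $Z_f$, $Z_g$, and $Z_{gf}$, while homotopicality follows from yet another direct $2$-out-of-$3$ application. Granting this, the universal property of the localization $\mathscr C\to\Ho(\mathscr C)$ produces the required factoring $Q\colon\Ho(\mathscr C)\to\Ho(\mathscr C_c)$; the convention $QX=X$ for cofibrant $X$ gives $Q\circ\incl=\id$ on the nose, and the weak equivalences $q_X$ descend to a natural isomorphism $\incl\circ Q\Rightarrow\id_{\Ho(\mathscr C)}$, providing the equivalence.
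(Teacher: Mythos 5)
Your argument is correct, and it is genuinely self-contained where the paper simply defers to R\u{a}dulescu-Banu \cite{cof} (an instance of a general approximation theorem for exact functors, of which the inclusion $\mathscr C_c\hookrightarrow\mathscr C$ is a special case). What you give is the classical cofibrant-replacement argument, and the ingredients you invoke really are available with only the ABC axioms: $QX\sqcup QY$ exists and is cofibrant, the structure map into $Z_f$ is a cofibration with cofibrant source, so the ``common refinement'' pushouts exist and remain cofibrant; and the reason the two zigzags coincide in $\Ho(\mathscr C_c)$ is that the composites $QX\to Z_f\to W'$ and $QX\to Z_f'\to W'$ are \emph{equal on the nose} by the universal property of the pushout over $QX\sqcup QY$ --- worth saying explicitly, since that equality is what the whole reduction rests on. Two spots are thinner than the rest. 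For composites, ``the same technique'' first requires gluing $Z_f$ and $Z_g$ along $QY$ (legitimate because the coproduct inclusion $QY\to QX\sqcup QY$ is a cofibration, being a pushout of $\varnothing\rightarrowtail QX$, so $QY\rightarrowtail Z_f$ is one too), then factoring the induced map to the target, and only then refining against $Z_{gf}$ over $QX\sqcup QZ$; this works but is a two-stage refinement, not a verbatim repeat. And you never check $Q(\id_X)=\id_{QX}$, which is not automatic because the two structure maps $QX\rightrightarrows Z_{\id_X}$ need not be equal as morphisms; it follows from your independence-of-choice lemma by choosing $Z_{\id_X}$ to be a cylinder factorization $QX\sqcup QX\rightarrowtail C\xrightarrow{\sim}QX$ of the fold map (postcomposed with $q_X$), where both inclusions are sections of a weak equivalence between cofibrant objects and hence agree in $\Ho(\mathscr C_c)$. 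With those two points supplied the proof is complete; what the paper's citation buys instead is the stronger and more general approximation statement, at the cost of invoking external machinery.
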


\begin{ex}\label{ex:model-abc}
Let $\mathscr C$ be a model category. Then $\mathscr C$ becomes an ABC cofibration category by forgetting the fibrations; in particular the subcategory ${\mathscr C}_c$ of cofibrant objects becomes a cofibration category in the same way, cf.~\cite[Proposition 2.2.4]{cof}.
\end{ex}

We now turn our attention to morphisms of cofibration categories:

\begin{defi}
Let $\mathscr C,\mathscr D$ be cofibration categories. A functor $F\colon\mathscr C\to\mathscr D$ of their underlying categories is called \emph{exact} if the following holds:
\begin{enumerate}
\item $F$ preserves cofibrations and acyclic cofibrations.
\item $F$ preserves arbitrary small coproducts and both (countable) sequential colimits and pushouts along cofibrations.
\end{enumerate}
We denote by \cat{COFCAT} the large category of cofibration categories with the exact functors as morphisms.
\end{defi}

As for model categories we have:

\begin{lemma}[Ken Brown's Lemma]
Any exact functor of cofibration categories is homotopical.
\begin{proof}
Cf.~\cite[proof of Lemma 4.1${}^\op$]{brown}.
\end{proof}
\end{lemma}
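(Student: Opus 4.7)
The plan is to adapt the standard Ken Brown argument from model categories, which only uses the parts of the model structure still available in a cofibration category (factorizations, pushouts, and 2-out-of-3). Let $w\colon X\to Y$ be a weak equivalence in $\mathscr{C}$. Since $\mathscr{C}$ is a cofibration category, every object is cofibrant, so the coproduct $X\sqcup Y$ exists and the two coproduct inclusions $\iota_X\colon X\to X\sqcup Y$ and $\iota_Y\colon Y\to X\sqcup Y$ are cofibrations (e.g.\ by axiom (5) applied to the pair $(\id_X, \varnothing\to Y)$, or alternatively by axiom (3) viewing $X\sqcup Y$ as a pushout). Moreover, $X\sqcup Y$ is cofibrant, so by axiom (4) I can factor the map $(w,\id_Y)\colon X\sqcup Y\to Y$ as
\[
X\sqcup Y \xrightarrow{\ j\ } Z \xrightarrow{\ p\ } Y
\]
with $j$ a cofibration and $p$ a weak equivalence.

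Next I verify that $j\iota_X\colon X\to Z$ and $j\iota_Y\colon Y\to Z$ are both acyclic cofibrations. Each is a cofibration as the composition of cofibrations. For the weak equivalence part, the composite $p\circ j\iota_Y$ equals $\id_Y$, so by 2-out-of-3 $j\iota_Y$ is a weak equivalence; similarly $p\circ j\iota_X = w$ is a weak equivalence, so by 2-out-of-3 $j\iota_X$ is as well.

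Now apply $F$: since $F$ is exact, $F(j\iota_X)$ and $F(j\iota_Y)$ are acyclic cofibrations in $\mathscr{D}$, hence in particular weak equivalences. From $F(p)\circ F(j\iota_Y)=\id_{F(Y)}$ and 2-out-of-3 in $\mathscr{D}$, $F(p)$ is a weak equivalence; then $F(w)=F(p)\circ F(j\iota_X)$ is a composition of weak equivalences and therefore itself a weak equivalence, as desired.

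There is no real obstacle here: the argument is essentially the classical one of Brown, and the only point that requires any attention is confirming that coproducts with $Y$ exist and that coproduct inclusions into them are cofibrations, both of which follow from the cofibrancy of every object together with axioms (3) or (5). Note also that $F$ is only required to preserve binary coproducts for this argument, which is covered by the hypothesis that $F$ preserves arbitrary small coproducts.
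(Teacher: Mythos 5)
Your argument is correct and is precisely the classical factorization-lemma proof of Brown that the paper cites (the dual of \cite[Lemma 4.1]{brown}): factor $(w,\id_Y)\colon X\sqcup Y\to Y$, observe that both composites with the coproduct inclusions are acyclic cofibrations, apply $F$, and conclude by 2-out-of-3. One tiny remark: your closing sentence is unnecessary, since the argument never needs $F$ to preserve coproducts at all --- it only uses functoriality and preservation of (acyclic) cofibrations.
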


Accordingly, any such functor $F\colon\mathscr C\to\mathscr D$ descends to a functor $\Ho(F)\colon\Ho({\mathscr C})\to\Ho({\mathscr D})$.

\begin{defi}\label{defi:we-cof-cat}
An exact functor $F\colon\mathscr C\to\mathscr D$ is called a \emph{weak equivalence} if $\Ho(F)$ is an equivalence of categories.
\end{defi}

While this may sound like a na\"\i{}ve definition at first (especially from the viewpoint of higher category theory), Cisinski has shown that in a precise sense such a map ``preserves all quasi-categorical data'' (cf.~\cite[Théorème 3.19]{approximation}, which we generalize in Corollary~\ref{cor:we-diag}, and also \cite[Théorème 3.25$^\op$]{we-good}).

We have the following meta-result:

\begin{thm}[Szumi\l{}o]
The large category $\cat{COFCAT}$ of cofibration categories carries the structure of a fibration category with weak equivalences as defined above.
\begin{proof}
Cf.~\cite[Theorem 2.8]{szumilo-cofcat}
\end{proof}
\end{thm}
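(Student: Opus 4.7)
The plan is to define a suitable class of fibrations and then verify the fibration category axioms, since the weak equivalences are already fixed by Definition~\ref{defi:we-cof-cat}.

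The natural candidates for fibrations are exact functors $P\colon\mathscr{C}\to\mathscr{D}$ satisfying (i) an isomorphism lifting property (every isomorphism in $\mathscr{D}$ with source $P(X)$ lifts to an isomorphism in $\mathscr{C}$ with source $X$) and (ii) a Brown-style approximation property: every morphism $P(X)\to Y$ in $\mathscr{D}$ can, after replacing $Y$ up to weak equivalence, be lifted to a cofibration out of $X$ in $\mathscr{C}$. Acyclic fibrations should then satisfy a strengthened version of (ii) in which no replacement of $Y$ is needed.

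With fibrations so defined, the elementary axioms are comparatively easy. The terminal object is the one-object cofibration category; $2$-out-of-$3$ for weak equivalences is inherited from that of equivalences of categories through the strict $2$-functor $\Ho$; and pullbacks along fibrations can be formed on underlying categories, with cofibrations and weak equivalences detected componentwise, the isomorphism lifting property of one leg ensuring that the relevant universal factorizations exist.

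The main technical obstacle is the factorization axiom: every exact functor $F\colon\mathscr{C}\to\mathscr{D}$ must factor as a weak equivalence followed by a fibration. I would imitate the mapping path space from classical homotopy theory: form the category $\mathscr{P}(F)$ of triples $(C,D,\alpha)$ with $C\in\mathscr{C}$, $D\in\mathscr{D}$, and $\alpha\colon F(C)\xrightarrow{\sim} D$ a weak equivalence, together with morphisms given by the obvious commuting squares, and equip it with cofibrations and weak equivalences detected on both legs. The evident inclusion $\mathscr{C}\hookrightarrow\mathscr{P}(F)$, $C\mapsto(C,F(C),\id)$, should then be a weak equivalence---a quasi-inverse on homotopy categories is given by the first-factor projection, with the pair $(\id_C,\alpha)\colon(C,F(C),\id)\to(C,D,\alpha)$ witnessing the needed natural weak equivalence---while the projection $\mathscr{P}(F)\to\mathscr{D}$, $(C,D,\alpha)\mapsto D$, should enjoy both the isomorphism lifting property (adjust $\alpha$ by postcomposition) and the approximation property (factor a given $D\to Y$ in $\mathscr{D}$ as a cofibration followed by a weak equivalence, then assemble into a cofibration in $\mathscr{P}(F)$ using that $\id_C$ is itself a cofibration since all objects are cofibrant). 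Verifying that $\mathscr{P}(F)$ genuinely assembles into a cofibration category---in particular checking the coproduct and sequential colimit axioms and the gluing-style compatibility needed to detect weak equivalences on both legs---together with stability of acyclic fibrations under pullback, is where I expect the substantive work to lie, and this is precisely the heart of Szumi\l{}o's argument in \cite{szumilo-cofcat}.
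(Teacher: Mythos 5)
First, a point of calibration: the paper does not actually prove this theorem --- the ``proof'' is only the citation to \cite[Theorem 2.8]{szumilo-cofcat}, and the text even remarks that the fibrations of this structure are never used later. So your proposal can only be compared with Szumi\l{}o's own argument. Its overall shape is the right one: one must specify a class of fibrations, the elementary axioms are easy, and the factorization axiom is indeed the heart of the matter and is handled by a mapping path category built from $\mathscr{C}$, $\mathscr{D}$, and weak equivalences $F(C)\to D$, with the section $C\mapsto (C,F(C),\id)$ a weak equivalence and the projection to $\mathscr{D}$ the fibration.

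The genuine gap is in your definition of fibration, and it propagates into your treatment of the pullback axiom. Szumi\l{}o's fibrations (\cite[Definition 2.3]{szumilo-cofcat}) are exact isofibrations that in addition lift \emph{factorizations} (and ``pseudofactorizations'') strictly: given $f\colon X\to Y$ in $\mathscr{C}$ and a factorization $Pf=vu$ into a cofibration followed by a weak equivalence in $\mathscr{D}$, there must exist a factorization $f=\tilde v\tilde u$ with $P\tilde u=u$ and $P\tilde v=v$ on the nose. Your condition (ii), which permits replacing the target up to weak equivalence, cannot be used where the fibration condition is actually needed: in the strict pullback $\mathscr{A}\times_{\mathscr{D}}\mathscr{C}$ along a fibration $P$, one verifies axiom (\ref{item:abc-factorization}) of a cofibration category by factoring the first component in $\mathscr{A}$ and then lifting the image factorization of the second component \emph{exactly} along $P$; no replacement of the target is available there, because the two legs must agree in $\mathscr{D}$ strictly. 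For the same reason your assertion that ``the isomorphism lifting property of one leg ensur[es] that the relevant universal factorizations exist'' is false: isofibrancy gives no control over factorizations. A secondary issue of the same kind: with cofibrations in $\mathscr{P}(F)$ merely ``detected on both legs,'' the projection to $\mathscr{D}$ will not lift factorizations either, which is why Szumi\l{}o's path object is built from Reedy-cofibrant zig-zags rather than from single weak equivalences. In short, the strategy is the correct one, but the class of fibrations you propose is not the one that makes it work, and the pullback axiom --- which you dismiss as comparatively easy --- is precisely where this shows.
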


We will never need information about the fibrations of this structure; however, the definition can be found as \cite[Definition 2.3]{szumilo-cofcat}.

Finally we turn our attention to diagrams in ABC cofibration categories. We recall that in model categories, though suitable model structures on diagram categories need not always exist, they do exist whenever the indexing category is a so-called \emph{Reedy category}. ABC cofibration categories allow a one-sided variant of this:

\begin{defi}
A \emph{direct category} is a category $I$ such that there exists a functor $\deg\colon I\to\textbf{N}$ into the poset of natural numbers with the following property: any morphism $f\colon X\to Y$ such that $\deg X=\deg Y$ is an identity morphism (in particular $X=Y$).
\end{defi}

In other words: there is a ``degree function'' on the objects such that no morphism lowers degree and every non-identity morphism actually strictly raises degree. One should think of this degree function as a tool for allowing inductive constructions and proofs.

\begin{defi}
Let $I$ be a direct category and $i\in I$ an object. The \emph{$i$-th latching category} ${\mathscr L}_iI$ is the full subcategory of the slice category $I\downarrow i$ on all objects except $\id_i$.

Let now $\mathscr C$ be an arbitrary category and $X\colon I\to\mathscr C$ a functor. The \emph{$i$-th latching object} $L_iX$ is (if it exists) the colimit of the composition
\begin{equation*}
\begin{tikzcd}
\mathscr{L}_iI\arrow[r,"\text{forget}\,"] &[.5em] I\arrow[r, "X"] &[-.5em] \mathscr C.
\end{tikzcd}
\end{equation*}
The object $\id_i$ of $I\downarrow i$ is terminal and thus the inclusion $\mathscr{L}_iI\hookrightarrow I\downarrow i$ induces a natural map $L_iX\to X_i$, called the \emph{$i$-th latching map}.
\end{defi}

\begin{defi}
Let $I$ be a (not necessarily small) direct category and $\mathscr C$ be an ABC cofibration category. Then a diagram $X\colon I\to\mathscr C$ is called \emph{Reedy cofibrant} if for every $i\in I$ the latching object $L_iX$ exists, is cofibrant, and the latching map $L_iX\to X_i$ is a cofibration.

A map $f\colon X\to Y$ with $X$ and $Y$ Reedy cofibrant is called a \emph{Reedy cofibration} if for all $i\in I$ the induced map $X_i\amalg_{L_iX}L_iY\to Y_i$ is a cofibration.
\end{defi}

\begin{prop}
Let $\mathscr C$ be an ABC cofibration category and $I$ a small direct category. Then the Reedy cofibrations together with the levelwise weak equivalences turn ${\mathscr C}^I$ into an ABC cofibration category. In particular, the subcategory ${\mathscr C}_{\text{R}}^I$ of Reedy cofibrant diagrams becomes a cofibration category.

Moreover, if $I$ is in addition a category with weak equivalences, then this restricts to the structure of an ABC cofibration category on the full subcategory of homotopical diagrams and to the structure of a cofibration category on the full subcategory of homotopical Reedy cofibrant diagrams.
\begin{proof}
Cf.~\cite[Theorem 9.2.4-(1a)]{cof} and \cite[Theorem 9.3.8-(1a)]{cof}.
\end{proof}
\end{prop}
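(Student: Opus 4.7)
The plan is to verify the six ABC axioms for $\mathscr{C}^I$ with Reedy cofibrations and levelwise weak equivalences by induction on the degree function of $I$. The initial object is the constant diagram at $\varnothing$, which is Reedy cofibrant because every latching category $\mathscr{L}_i I$ has initial value $\varnothing$, so each latching map is an identity. The 2-out-of-3 axiom is trivial because weak equivalences are levelwise. The entire argument is driven by the fact that, for $i \in I$, the latching category $\mathscr{L}_i I$ involves only objects of degree strictly less than $\deg i$, so $L_i X$ is determined by the values of $X$ on a direct category of lower maximal degree, allowing an inductive construction.

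The crucial step is the factorization axiom (\ref{item:abc-factorization}). Given $f \colon X \to Y$ with $X$ Reedy cofibrant, I construct a factorization $X \xrightarrow{i} Z \xrightarrow{p} Y$ with $i$ a Reedy cofibration and $p$ a levelwise weak equivalence stage by stage. Assuming $Z$ has been built on all objects of degree ${<}n$, for $i \in I$ of degree $n$ I form the pushout $X_i \amalg_{L_i X} L_i Z$ inside $\mathscr{C}$ (this is legal since $L_i X \to X_i$ is a cofibration of cofibrant objects and $L_i X \to L_i Z$ is a cofibration by the inductive assumption, using that $L_i$ commutes with pushouts and preserves cofibrations between Reedy cofibrant diagrams), and then factor the induced map $X_i \amalg_{L_i X} L_i Z \to Y_i$ in $\mathscr{C}$ as a cofibration followed by a weak equivalence to produce $Z_i$ and the two maps. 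The pushout axiom in $\mathscr{C}^I$ is established by the same inductive pattern, using that at each stage the pushout $D_i$ is given by $B_i \amalg_{A_i \amalg_{L_i A} L_i B}(C_i \amalg_{L_i C} L_i D)$, where $L_i D = L_i B \amalg_{L_i A} L_i C$ since $L_i$ is a colimit.

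For the coproduct and sequential colimit axioms, both operations are computed levelwise and $L_i$ commutes with them, so cofibrancy of the latching maps is preserved by axioms (5) and (6) in $\mathscr{C}$. The cofibration category structure on $\mathscr{C}^I_{\text{R}}$ is then automatic since every object is cofibrant by definition. Finally, when $I$ carries weak equivalences, I show the full subcategory of homotopical diagrams is closed under all the relevant operations: for a factorization $X \to Z \to Y$ with $X, Y$ homotopical, any weak equivalence $w \colon i \to j$ in $I$ fits into a square whose vertical maps $Z_i \to Y_i$, $Z_j \to Y_j$ are weak equivalences and whose bottom $Y(w)$ is a weak equivalence, so $Z(w)$ is a weak equivalence by 2-out-of-3; the analogous arguments for pushouts along cofibrations, coproducts, and sequential colimits use that these constructions preserve weak equivalences in $\mathscr{C}$ (the relevant acyclicity clauses in the ABC axioms). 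The main obstacle I anticipate is bookkeeping around the pushout axiom: one must check both that the stagewise construction really yields the categorical pushout in $\mathscr{C}^I$ and that the commutation $L_i D \cong L_i B \amalg_{L_i A} L_i C$ genuinely delivers a Reedy cofibration, which requires carefully chasing through the interaction of $L_i$ with iterated pushouts.
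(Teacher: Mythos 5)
The paper does not give an argument here at all: it defers entirely to R\u{a}dulescu-Banu \cite{cof}, and the proof there is exactly the Reedy-style induction on the degree function that you outline, so your overall strategy is the right one. However, your sketch elides the one point that makes the ABC setting genuinely harder than the model-categorical one, and it is a real gap rather than bookkeeping. In a model category every colimit exists, so latching objects are always available; in an ABC cofibration category the axioms only provide pushouts along cofibrations, coproducts of cofibrations, and countable sequential colimits of cofibrations. When you "form the pushout $X_i\amalg_{L_iX}L_iZ$" in the factorization step, you first need $L_iZ$ to \emph{exist}, and more generally every stage of your induction silently assumes that the latching objects of the partially constructed diagrams exist and that $L_i$ of a Reedy cofibration is a cofibration with cofibrant domain. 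This is the content of Proposition~\ref{prop:reedy-colim} (colimits of Reedy cofibrant diagrams over small direct categories exist and are cofibrant), which is not an input you may take for granted: it has to be proved by an interleaved induction in which $\colim_{\mathscr{L}_iI}$ is assembled, degree by degree, as a countable sequential colimit of pushouts of coproducts of (relative) latching maps --- this is precisely where axioms (5) and (6) of the ABC definition enter, and your sketch never uses them for anything except the "coproduct and sequential colimit axioms are levelwise" step. Without this existence result the induction does not get off the ground.

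A second, smaller imprecision: for the homotopical part, closure of the homotopical diagrams under pushouts along Reedy cofibrations (and under coproducts and sequential colimits) is not a direct consequence of "the relevant acyclicity clauses in the ABC axioms." Those clauses say that the pushout of an \emph{acyclic cofibration} is an acyclic cofibration; what you actually need is the gluing lemma (a levelwise weak equivalence of pushout spans along cofibrations of cofibrant objects induces a weak equivalence of pushouts), which is a genuine theorem about cofibration categories, proved via factorization and Ken Brown-type arguments. Your 2-out-of-3 argument for the factorization axiom is fine as stated; the pushout, coproduct, and sequential colimit cases each need their own homotopy-invariance lemma. Both missing ingredients are standard and are supplied in \cite{cof}, but a self-contained proof must establish them, and they carry most of the actual work.
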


The above says in particular that for any homotopical diagram $X\colon I\to\mathscr C$ we can find a Reedy cofibrant homotopical diagram $\widehat X\colon I\to\mathscr C$ together with a weak equivalence $\widehat{X}\to X$. The following relative version of this result will be extremely useful:

\begin{defi}
A \emph{sieve} is a fully faithful embedding $f\colon I\hookrightarrow J$ such that the following holds: if $j\in J$ is an object such that there exists a morphism $j\to f(i)$ for some $i\in I$ then $j$ lies in the  image of $f$.
\end{defi}

We emphasize that in the above defintion we are considering the honest image (as opposed to the essential image).

\begin{lemma}\label{lemma:relative-cofibrant-replacement}
Let $I\hookrightarrow J$ be a homotopical sieve of small homoptical direct categories (i.e.~a homotopical functor that is at the same time a sieve between the underlying categories) and let $\mathscr C$ be a cofibration category. Assume we are given a homotopical diagram $X\colon J\to {\mathscr C}$ and a weak equivalence $f\colon H\to X|_I$ such that $H$ is Reedy cofibrant and homotopical. Then there exists a Reedy cofibrant homotopical diagram $\widehat X$ together with a weak equivalence $g\colon\widehat X\to X$ such that $\widehat{X}|_I=H$ and $g|_I=f$. 
\begin{proof}
This is a special case of \cite[Lemma 1.9-(1)]{szumilo-frames}.
\end{proof}
\end{lemma}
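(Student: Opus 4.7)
The natural strategy is a transfinite induction on the degree function of $J$, defining $\widehat X$ and $g$ one object at a time on $J\setminus I$ and using the ABC factorization axiom at each step. First I would observe that the sieve condition means exactly that for an object $j\in J\setminus I$ every morphism $k\to j$ has source $k$ either in $I$ or in $J\setminus I$, whereas every morphism out of $j$ must land again in $J\setminus I$. Since $J$ is direct, any non-identity morphism into $j$ strictly lowers degree. Thus the latching category $\mathscr{L}_jJ$ at $j\in J\setminus I$ consists only of objects on which $\widehat X$ has already been defined once we have processed all objects of $I$ together with all objects of $J\setminus I$ of smaller degree.

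The inductive step goes as follows. Assume $\widehat X$ has been constructed on $I\cup (J\setminus I)_{<n}$ (extending $H$ on $I$), together with a levelwise weak equivalence $g$ to $X$ restricted to that subcategory, and assume it is Reedy cofibrant, homotopical, and functorial. For $j\in J\setminus I$ of degree $n$, form the latching object $L_j\widehat X$; by the inductive hypothesis it exists and is cofibrant, and by functoriality of $g$ we obtain a canonical comparison map $L_j\widehat X\to L_jX\to X_j$. Apply axiom~\eqref{item:abc-factorization} to factor this as a cofibration followed by a weak equivalence
\begin{equation*}
L_j\widehat X\hookrightarrow \widehat X_j\xrightarrow{\ \sim\ } X_j,
\end{equation*}
and define $g_j$ to be the displayed weak equivalence. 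By the universal property of the latching object, specifying the cofibration $L_j\widehat X\to\widehat X_j$ is the same data as specifying compatible structure maps $\widehat X_k\to\widehat X_j$ for every non-identity $k\to j$ in $J$; this simultaneously extends $\widehat X$ functorially and produces a commuting naturality square witnessing that $g$ remains natural at $j$. Once all objects of degree $n$ in $J\setminus I$ have been processed, one continues to degree $n+1$.

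Three things then need to be verified at the end. That $\widehat X$ is Reedy cofibrant is immediate: on $I$ this is the hypothesis on $H$, and at each new $j$ the latching map is the chosen cofibration by construction. That $g\colon\widehat X\to X$ is a levelwise (hence, as a map of homotopical diagrams, an objectwise) weak equivalence holds by construction of each $g_j$ and by $g|_I=f$. Finally, that $\widehat X$ is homotopical follows from $2$-out-of-$3$: for any weak equivalence $\alpha\colon k\to j$ in $J$, the naturality square for $g$ has $g_k$, $g_j$, and $X(\alpha)$ weak equivalences, so $\widehat X(\alpha)$ is one as well.

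The only point that requires genuine attention is checking that the construction is well-defined and functorial across the boundary between $I$ and $J\setminus I$; in particular, one must ensure that the latching object $L_j\widehat X$ at $j\in J\setminus I$ is correctly computed using the prescribed values $H_k$ for $k\in I$ that map into $j$, and that the induced structure maps $H_k\to \widehat X_j$ agree with the restriction of $f$ composed with $X(k\to j)$ up to the homotopical data. Both issues are handled by the universal property of the colimit defining $L_j\widehat X$, since the sieve property rules out any obstruction coming from morphisms going in the "wrong" direction. I expect that the rest of the argument consists of routine bookkeeping to confirm that the universal maps assemble into a strictly commuting diagram, which is exactly where Szumiło's framework in \cite{szumilo-frames} is designed to apply.
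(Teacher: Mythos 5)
The paper gives no argument of its own here: it simply quotes Szumi\l{}o's Lemma 1.9-(1), whose proof is exactly the induction on degree that you carry out, so your proposal is a correct reconstruction of the cited proof rather than a genuinely different route. Two technical points deserve more care than you give them. First, the intermediate object $L_jX$ in your comparison map $L_j\widehat X\to L_jX\to X_j$ need not exist ($X$ is not assumed Reedy cofibrant, and a cofibration category only has the colimits its axioms guarantee); the map to be factored via axiom~(\ref{item:abc-factorization}) should instead be produced directly from the cocone $\widehat X_k\xrightarrow{g_k}X_k\to X_j$ indexed by the latching category. Second, the existence and cofibrancy of $L_j\widehat X$ is not literally ``the inductive hypothesis'': it follows from Proposition~\ref{prop:reedy-colim} applied to the restriction of the already-constructed part of $\widehat X$ to $\mathscr L_jJ$, which is again Reedy cofibrant because of the standard identification $\mathscr L_{(k,\alpha)}(\mathscr L_jJ)\cong\mathscr L_kJ$. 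With these repairs the argument goes through: functoriality of the extension and naturality of $g$ at $j$ are indeed automatic from the cocone description of maps out of $L_j\widehat X$, homotopicalness follows by $2$-out-of-$3$ as you say, and the place where the sieve hypothesis genuinely enters is the one you gesture at in your last paragraph --- for $i\in I$ every morphism of $J$ into $i$ has source in $I$, so the latching category of $i$ computed in $J$ coincides with the one computed in $I$ and extending $H$ cannot destroy Reedy cofibrancy at objects of $I$. (Your opening paraphrase of the sieve condition for $j\in J\setminus I$ is vacuous; the nontrivial half is the statement about objects of $I$.)
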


We will without further mention use the following:

\begin{lemma}
Let $I,J$ be direct categories with weak equivalences. Then the exponential law isomorphism restricts to an isomorphism ${\mathscr C}^{I\times J}_{\text{R}}\cong({\mathscr C}^I_{\text{R}})^J_{\text{R}}$ of cofibration categories.
\begin{proof}
It is obvious that a diagram $I\times J\to\mathscr C$ is homotopical if and only if its adjunct $J\to\mathscr C^I$ is, and analogously for weak equivalences. For the corresponding statements regarding Reedy cofibrations cf.~\cite[proof of Lemma 3.14]{frame-loc}.
\end{proof}
\end{lemma}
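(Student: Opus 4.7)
The plan is to verify that the classical exponential adjunction $(\blank)^\sharp\colon\mathscr{C}^{I\times J}\xrightarrow{\cong}(\mathscr{C}^I)^J$ respects each piece of the cofibration category structure separately. Since the adjunction sends $X$ to the functor $X^\sharp$ with $X^\sharp(j)(i)=X(i,j)$, homotopicality of diagrams, the levelwise weak equivalence condition on maps, and the restriction to homotopical functors when $I$ and $J$ carry weak equivalences are all manifestly symmetric in the two coordinates and transfer along the exponential law by inspection, as the hint indicates. The substantive content is therefore the comparison of Reedy cofibrant diagrams and of Reedy cofibrations.

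For this I would first establish the following combinatorial description of latching categories: for any direct categories $I,J$ and any $(i,j)\in I\times J$, the slice identification $(I\times J)\downarrow(i,j)=(I\downarrow i)\times(J\downarrow j)$ allows one to write $\mathscr{L}_{(i,j)}(I\times J)$ as the union of the full subcategories $\mathscr{L}_iI\times(J\downarrow j)$ and $(I\downarrow i)\times\mathscr{L}_jJ$, with intersection $\mathscr{L}_iI\times\mathscr{L}_jJ$. A small but crucial point is that---using directness of both $I$ and $J$---there are no morphisms in $\mathscr{L}_{(i,j)}(I\times J)$ connecting an object $(f,\id_j)$ with $f\neq\id_i$ to an object $(\id_i,g)$ with $g\neq\id_j$ in either direction, so the above union really is a pushout of small categories in the relevant sense for computing colimits.

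Pulling any diagram $X\colon I\times J\to\mathscr{C}$ back along these inclusions and using that colimits in $\mathscr{C}^I$ are pointwise, I would then identify
\[
L_{(i,j)}X\;=\;L_iX^\sharp(j)\,\cup_{L_i(L_jX^\sharp)}\,(L_jX^\sharp)(i),
\]
the pushout being taken over the obvious maps obtained from terminality of $(j,\id_j)\in J\downarrow j$ and of $(i,\id_i)\in I\downarrow i$. Under this identification the $(i,j)$-th latching map of $X$ becomes precisely the $i$-th relative latching map (in $\mathscr{C}$) of the $j$-th latching map (in $\mathscr{C}^I$) of $X^\sharp$. Hence $X$ is Reedy cofibrant if and only if each $X^\sharp(j)$ is Reedy cofibrant \emph{and} each latching map $L_jX^\sharp\to X^\sharp(j)$ is a Reedy cofibration in $\mathscr{C}^I$, which is exactly the condition that $X^\sharp$ is Reedy cofibrant as a $J$-shaped diagram in $\mathscr{C}^I_{\text{R}}$. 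The analogous computation applied to a morphism $f\colon X\to Y$ via its Leibniz/pushout-product formulation gives the correspondence of Reedy cofibrations on both sides.

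The main obstacle is the combinatorial pushout decomposition of $\mathscr{L}_{(i,j)}(I\times J)$ together with the verification that the induced identifications of latching objects and latching maps are natural and strictly compatible with all restrictions---so that one genuinely obtains an equality of cofibration category structures under the exponential law, not merely a zig-zag of comparisons. This is precisely the content of the proof of \cite[Lemma 3.14]{frame-loc} cited by the author, and I would simply appeal to that calculation to conclude.
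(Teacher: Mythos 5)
Your proposal is correct and takes essentially the same route as the paper: the paper likewise treats the homotopical and weak-equivalence conditions as immediate from inspection and delegates the Reedy-cofibration comparison to the proof of \cite[Lemma 3.14]{frame-loc}, which is exactly the latching-category decomposition $\mathscr{L}_{(i,j)}(I\times J)=\bigl(\mathscr{L}_iI\times(J\downarrow j)\bigr)\cup\bigl((I\downarrow i)\times\mathscr{L}_jJ\bigr)$ and the resulting relative-latching-map identification that you spell out. The only difference is that you make the combinatorics explicit where the paper simply cites.
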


The axioms of an ABC cofibration category guarantee the existence of some specific colimits. The following result greatly extends this:

\begin{prop}\label{prop:reedy-colim}
Let $\mathscr C$ be an ABC cofibration category, $I$ a small direct category, and $X\colon I\to\mathscr C$ Reedy cofibrant. Then $\colim_I X$ exists and is cofibrant. Moreover, this colimit is preserved by any exact functor.
\begin{proof}
The first statement is \cite[Theorem 9.3.5-(1a)]{cof} and the second one follows immediately from the explicit construction given there.
\end{proof}
\end{prop}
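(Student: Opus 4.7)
My plan is to build $\colim_I X$ inductively along the degree filtration of $I$. Since $I$ is small and direct, fix a degree function $\deg\colon I\to\textbf{N}$ and write $I_n\subseteq I$ for the full subcategory on the objects of degree at most $n$, so that $I_0\subseteq I_1\subseteq\cdots$ with $\bigcup_n I_n=I$. Restriction to any $I_n$ preserves Reedy cofibrancy because the latching category at $i$ lives entirely in $I_{\deg(i)-1}$. I will show by induction on $n$ that $\colim_{I_n}X|_{I_n}$ exists, is cofibrant, and that the comparison map $\colim_{I_{n-1}}X\to\colim_{I_n}X$ induced by inclusion is a cofibration. For $n=0$ the category $I_0$ is discrete, each $L_iX$ is the empty colimit $\varnothing$ so that $X_i$ is cofibrant, and axiom~(5) gives the coproduct $\coprod_{\deg i=0}X_i\cong\colim_{I_0}X$ and its cofibrancy.

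For the inductive step, $I_n$ is obtained from $I_{n-1}$ by freely attaching each degree-$n$ object $i$ along its latching category $\mathscr{L}_iI\subseteq I_{n-1}$. Translating this into colimits, $\colim_{I_n}X$ should be the pushout $P$ of
\[
\begin{tikzcd}
\coprod_{\deg i=n}L_iX \arrow[r] \arrow[d, tail] & \colim_{I_{n-1}}X \arrow[d]\\
\coprod_{\deg i=n}X_i \arrow[r] & P.
\end{tikzcd}
\]
By induction $\colim_{I_{n-1}}X$ is cofibrant, and by Reedy cofibrancy each $L_iX$ is cofibrant and each latching map $L_iX\to X_i$ a cofibration; axiom~(5) therefore makes the left-hand vertical map a cofibration between cofibrant objects. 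Axiom~(3) now produces $P$ and shows the right-hand leg $\colim_{I_{n-1}}X\to P$ to be a cofibration, whence $P$ is cofibrant. A direct cocone argument (a cocone on $X|_{I_n}$ is the same as a cocone on $X|_{I_{n-1}}$ together with compatible maps $X_i\to Z$ for $\deg i=n$) identifies $P$ with $\colim_{I_n}X$.

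If the degree function of $I$ is bounded we are finished; otherwise the sequence $\colim_{I_0}X\hookrightarrow\colim_{I_1}X\hookrightarrow\cdots$ consists of cofibrations starting from a cofibrant object, so axiom~(6) produces its colimit, which a final cocone argument identifies with $\colim_IX$. Preservation by exact functors is then automatic, as every step of the construction is a coproduct, a pushout along a cofibration, or a countable sequential colimit of cofibrations—exactly the colimits an exact functor is required to preserve. The main obstacle, as I see it, lies in the universal-property bookkeeping that identifies each intermediate pushout with the correct colimit over $I_n$, and in correctly setting up the map $\coprod_{\deg i=n}L_iX\to\colim_{I_{n-1}}X$ from the latching-category inclusions; this is purely formal but requires care in matching the attaching data of the degree-$n$ stratum with the restriction of the cocone from $I_{n-1}$.
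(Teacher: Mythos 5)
Your construction is correct and is precisely the explicit degree induction that the paper outsources to \cite[Theorem 9.3.5-(1a)]{cof}: filter $I$ by degree, attach each stratum via a pushout along the coproduct of latching maps, and pass to the sequential colimit, with axioms (5), (3), (6) supplying existence, cofibrancy, and preservation. The one point deserving an extra line is the exactness claim: to see that $F$ carries your construction for $X$ to the corresponding one for $F\circ X$ you also need $F(L_iX)\cong L_i(FX)$, which holds because each latching object is itself the colimit of a Reedy cofibrant diagram over the direct category $\mathscr{L}_iI$ whose objects have strictly smaller degree, so the same (strong) induction covers it.
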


With this established, it easily follows:

\begin{prop}
Let $f\colon\mathscr C\to\mathscr D$ be an exact functor of cofibration categories and $I$ a small direct category. Then pushforward along $f$ yields an exact functor $\mathscr C^I_{\text{R}}\to\mathscr D^I_{\text{R}}$, in particular it preserves Reedy cofibrant diagrams.\qed
\end{prop}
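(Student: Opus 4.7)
The plan is to reduce each clause in the definition of exactness to the pointwise behavior of $f$, exploiting that latching objects are themselves colimits of Reedy cofibrant diagrams. First I would show that $f_*$ sends Reedy cofibrant diagrams to Reedy cofibrant diagrams. For $X\colon I\to\mathscr C$ Reedy cofibrant and $i\in I$, the restriction $X|_{\mathscr L_iI}$ is itself Reedy cofibrant over $\mathscr L_iI$ (with the induced degree function), because the latching category of $\mathscr L_iI$ at any object $(j\to i)$ is canonically isomorphic to $\mathscr L_jI$ in a way compatible with the latching maps. Applying Proposition~\ref{prop:reedy-colim} to $X|_{\mathscr L_iI}$ then yields a canonical isomorphism $L_i(f_*X)\cong f(L_iX)$ under which the $i$-th latching map of $f_*X$ is identified with $f$ applied to the $i$-th latching map of $X$. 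Since $f$ preserves cofibrations, $f_*X$ is Reedy cofibrant.

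Next I would extend this to the preservation of (acyclic) Reedy cofibrations. Given a Reedy cofibration $\varphi\colon X\to Y$, the preceding identification shows that the relative latching object of $f_*\varphi$ at $i$ agrees with the pushout $f(X_i)\amalg_{f(L_iX)}f(L_iY)$ formed in $\mathscr D$. Since $L_iX\to X_i$ is a cofibration by Reedy cofibrancy of $X$, exactness of $f$ gives a canonical isomorphism of this pushout with $f(X_i\amalg_{L_iX}L_iY)$; under it, the relative latching map of $f_*\varphi$ coincides with $f$ applied to that of $\varphi$, and so is a cofibration, acyclic if $\varphi$ was.

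For the remaining clauses, small coproducts, sequential colimits along Reedy cofibrations, and pushouts along Reedy cofibrations all exist and are computed pointwise in $\mathscr C^I_{\text{R}}$ and $\mathscr D^I_{\text{R}}$, so preservation by $f_*$ follows directly from pointwise exactness of $f$. The only genuine content is the identification $L_i(f_*X)\cong f(L_iX)$, which I expect to be the main obstacle: it rests on verifying that Proposition~\ref{prop:reedy-colim} applies, i.e.\ that the restriction of a Reedy cofibrant diagram to a latching subcategory is again Reedy cofibrant. Once this is established, the rest of the argument is formal.
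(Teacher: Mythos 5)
Your argument is correct and is precisely the one the paper intends: the proposition is stated without proof immediately after Proposition~\ref{prop:reedy-colim} with the remark ``with this established, it easily follows,'' and the key step is exactly your identification $L_i(f_*X)\cong f(L_iX)$, obtained by recognizing the latching object as the colimit of the Reedy cofibrant restriction $X|_{\mathscr L_iI}$ (Reedy cofibrant because the latching category of $\mathscr L_iI$ at $\alpha\colon j\to i$ is indeed $\mathscr L_jI$) and invoking that exact functors preserve colimits of Reedy cofibrant diagrams. The only cosmetic point is that acyclicity of a Reedy cofibration is here a levelwise condition by definition, so its preservation is most directly seen from Ken Brown's Lemma ($f$ is homotopical) rather than by tracking acyclicity through the relative latching maps.
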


What makes ABC cofibration categories very convenient is that they are in full generality closed under forming diagram categories:

\begin{thm}[Cisinski, R{\u a}dulescu-Banu]\label{thm:abc-diagram}
Let $\mathscr C$ be an ABC cofibration category and $I$ a small category. Then ${\mathscr C}^I$ equipped with the levelwise weak equivalences and levelwise cofibrations is an ABC cofibration category. If $I$ is a category with weak equivalences, the same holds true for the full subcategory of homotopical diagrams.

In particular, for every cofibration category $\mathscr C$ and every small category $I$, ${\mathscr C}^I$ equipped with the levelwise weak equivalences and cofibrations is again a cofibration category. If $I$ is a category with weak equivalences, the same holds true for the full subcategory of homotopical diagrams.
\begin{proof}
Cf.~\cite[Theorem 9.5.5-(1)]{cof} and \cite[Theorem 9.5.6-(1)]{cof}.
\end{proof}
\end{thm}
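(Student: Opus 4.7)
The plan is to verify the six axioms of an ABC cofibration category on $\mathscr C^I$ equipped with levelwise weak equivalences and levelwise cofibrations. Axioms (1), (2), (5), and (6) reduce at once to the corresponding axioms in $\mathscr C$: the constant diagram at an initial object of $\mathscr C$ is again initial and cofibrant in $\mathscr C^I$; stability of cofibrations under composition, 2-out-of-3, and the preservation of cofibrations by coproducts respectively countable sequential colimits are all pointwise conditions, and the required colimits in $\mathscr C^I$ exist because they may be formed pointwise using the corresponding colimits in $\mathscr C$. Axiom (3) is similar: given a pushout diagram with $A\to B$ a levelwise cofibration and $A$, $C$ levelwise cofibrant, the pointwise pushouts exist in $\mathscr C$ by axiom (3) there, assemble into a functor, and form a pushout in $\mathscr C^I$ whose parallel map is a levelwise (acyclic) cofibration.

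The real work is axiom (4), the factorization $X\to Z\to Y$ of a map in $\mathscr C^I$ with $X$ pointwise cofibrant into a levelwise cofibration followed by a levelwise weak equivalence. Pointwise factorizations in $\mathscr C$ do not in general assemble into a functor on $I$, so some coherent construction is needed. My approach would be to reduce to the direct-category case by replacing $I$ with the category of simplices $\Delta/\nerve(I)$, which is direct and comes with a canonical projection $p\colon\Delta/\nerve(I)\to I$. On $\mathscr C^{\Delta/\nerve(I)}$ one would factor the restriction $p^*f$ inductively on simplicial degree, applying at each object $\sigma$ axiom (4) of $\mathscr C$ to the relative latching map $X_\sigma\cup_{L_\sigma X}L_\sigma Z\to Y_\sigma$ whose source is cofibrant by axiom (3). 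To return to $\mathscr C^I$, one would then compute a pointwise left Kan extension along $p$, using axioms (5) and (6) to ensure the needed colimits exist and remain cofibrations.

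The main obstacle is the descent step: one must verify that this Kan extension really produces a levelwise cofibration and a levelwise weak equivalence over $I$, and not merely pointwise statements over $\Delta/\nerve(I)$. An alternative, closer to R\u{a}dulescu-Banu's actual argument in \cite{cof}, is to avoid the reduction entirely and construct $Z$ directly in $\mathscr C^I$ by a countable transfinite process: starting from $Z^0=X$, one iteratively repairs obstructions to having a factorization of the desired form by pushouts along cofibrations, using axiom (6) of $\mathscr C$ to pass to the colimit and conclude that the composite $X\to Z$ is a cofibration. This circumvents the descent issue but requires careful bookkeeping over the small set of objects of $I$ to ensure termination and coherence.

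For the homotopical version, once the factorization $X\hookrightarrow Z\xrightarrow{\sim}Y$ has been constructed with $Y$ homotopical, $Z$ is automatically homotopical: for each weak equivalence $\alpha\colon i\to j$ in $I$, 2-out-of-3 applied to the naturality square with weak equivalences $Z_i\xrightarrow{\sim}Y_i$, $Z_j\xrightarrow{\sim}Y_j$ and $Y(\alpha)$ forces $Z(\alpha)$ to be a weak equivalence. The other axioms restrict to homotopical diagrams without difficulty. Finally, the cofibration category statement is a special case of the ABC one, since when every object of $\mathscr C$ is cofibrant every diagram in $\mathscr C^I$ is levelwise cofibrant.
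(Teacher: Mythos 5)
Your handling of axioms (1), (2), (3), (5), (6) and of the two reductions at the end (homotopicalness of the intermediate object via 2-out-of-3, and the all-objects-cofibrant case) is correct, and you rightly identify the factorization axiom (4) as the entire content of the theorem; note that the paper does not prove it either, but cites \cite[Theorems 9.5.5-(1) and 9.5.6-(1)]{cof} and remarks that in this generality it is a deep result relying heavily on infinite colimits. Your proposed argument for axiom (4), however, has two genuine gaps. First, the category of simplices $\Delta\downarrow\nerve(I)$ is \emph{not} direct: degeneracy operators give non-identity morphisms that strictly lower simplicial degree, so it is only a Reedy category. To get a direct category one must restrict to injective monotone maps, which is exactly the category $DI$ of Construction~\ref{constr:d}; with that fix the Reedy-inductive factorization of $p^*f$ does go through. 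Second, and more seriously, the descent step fails as described: if $p^*X\rightarrowtail Z'\xrightarrow{\sim}p^*Y$ is the factorization over $DI$, the left Kan extension $p_!Z'$ does not sit between $X$ and $Y$, because the counit $p_!p^*X\to X$ is not an isomorphism; at best you factor a map merely weakly equivalent to $f$, which does not verify axiom (4) for $\mathscr C^I$. (Computing $p_!$ pointwise would moreover require colimits over the comma categories $p\downarrow i$, which are neither finite nor direct, so their existence is not guaranteed by the axioms.) Your fallback --- iteratively repairing obstructions by pushouts and passing to a colimit --- is the right general shape of R\u{a}dulescu-Banu's argument, but as stated it names no actual construction: what the obstructions are, what one pushes out along, and why the process terminates is precisely the hard content of \cite[Chapter 9]{cof}, and none of it is supplied here.

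A smaller point: for the homotopical subcategory, ``the other axioms restrict without difficulty'' hides a real step in axiom (3) and its analogues (5) and (6). To see that a pushout of homotopical diagrams along a levelwise cofibration is again homotopical, you need that pushouts along cofibrations preserve levelwise weak equivalences, i.e.\ the gluing lemma for cofibration categories (and likewise for coproducts and sequential colimits of cofibrations). These are standard facts, but they are themselves consequences of the axioms --- in particular of the factorization axiom --- rather than formalities, and should at least be invoked explicitly.
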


We remark that in the above generality this is a deep theorem that relies heavily on the existence of infinite colimits for constructing the factorizations. It is a triviality if one assumes functorial factorizations.

\begin{rk}
Let $I, J$ be categories with weak equivalences and let $\mathscr C$ be a cofibration category. Then the exponential law isomorphism restricts to an isomorphism ${\mathscr C}^{I\times J}\cong({\mathscr C}^I)^J$ of cofibration categories.
\end{rk}

We conclude with some easy observations about the functoriality of this construction:

\begin{lemma}
Let $I,J$ be categories with weak equivalences, and let $\mathscr C,\mathscr D$ be cofibration categories.
\begin{enumerate}
\item If $f\colon I\to J$ is a homotopical functor, then restriction along $f$ yields an exact functor $f^*\colon\mathscr C^J\to\mathscr C^I$.
\item If $F\colon \mathscr C\to\mathscr D$ is an exact functor, then pushforward along $F$ yields an exact functor $F_*\colon\mathscr C^I\to\mathscr D^I$.\qed
\end{enumerate}
\end{lemma}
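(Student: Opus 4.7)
The plan is to reduce everything to the observation that, by construction (cf.~Example~\ref{ex:levelwise-we} and Theorem~\ref{thm:abc-diagram}), weak equivalences and cofibrations in $\mathscr{C}^I$, $\mathscr{C}^J$, $\mathscr{D}^I$ are defined levelwise, and moreover the colimits appearing in the ABC cofibration category axioms (and hence also in the cofibration category axioms, via the subcategory of cofibrant objects) are likewise computed levelwise. Since $f^*$ simply evaluates at the objects $f(i)\in J$, and $F_*$ simply postcomposes with $F$, both operations commute with any colimit computed objectwise in $\mathscr{C}$ respectively $\mathscr{D}$.

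For part (1), I would first check that $f^*$ is well-defined between the full subcategories of homotopical diagrams: if $X\colon J\to\mathscr{C}$ sends weak equivalences to weak equivalences and $f\colon I\to J$ is homotopical, then $X\circ f$ is homotopical as the composition of homotopical functors. Cofibrations and acyclic cofibrations in $\mathscr{C}^J$ are levelwise such, and restricting to $I$ via $f$ merely selects a subfamily, so they are preserved. Coproducts, sequential colimits, and pushouts along cofibrations in $\mathscr{C}^J$ and $\mathscr{C}^I$ are computed levelwise in $\mathscr{C}$, and $f^*$ evidently commutes with taking values at a given $i\in I$.

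For part (2), I would first invoke Ken Brown's lemma to ensure that $F$ is homotopical, so $F\circ X$ is homotopical whenever $X$ is, making $F_*$ well-defined on the relevant subcategories. Levelwise (acyclic) cofibrations are preserved because $F$ preserves them by exactness. The same exactness of $F$ guarantees preservation of the required levelwise colimits, since these colimits exist in $\mathscr{C}$ by the axioms and are then carried into the corresponding colimits in $\mathscr{D}$ by $F$.

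There is no real obstacle: the content of the lemma is entirely that the levelwise structure on diagram categories is visibly compatible with functors acting on the indexing category or on the value category. The only conceptual point worth isolating is the use of Ken Brown's lemma to promote exactness of $F$ to homotopicality, which is needed so that $F_*$ restricts to homotopical diagrams.
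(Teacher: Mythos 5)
Your proof is correct and fills in exactly the routine verification that the paper omits (the lemma is stated with a \qed as an ``easy observation''): everything reduces to the fact that weak equivalences, cofibrations, and the colimits required by the axioms are all levelwise in the diagram categories. The one genuinely non-formal point --- invoking Ken Brown's Lemma so that $F_*$ restricts to homotopical diagrams --- is correctly identified and handled.
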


\subsection{Derivators}
The theory of \emph{derivators} was invented by Grothendieck in \cite{derivateurs} and then further studied among others by Cisinski, Maltsiniotis, Keller, and Groth, cf.~e.g.~\cite{basic-loc,modelgood,maltsi,groth-publ}. A slight variation of this notion was also independently introduced and studied by Heller \cite{heller}.

We are mostly following the monograph \cite{groth} and the article \cite{groth-publ} here (in particular our convention on $2$-cells is ``opposite'' to the one used by Grothendieck and Cisinski).

\begin{defi}\label{defi:preder}
A \emph{prederivator} $\mathscr D$ is a strict $2$-functor $\cat{Cat}^\op\to\cat{CAT}$. We call $\mathscr D(*)$ the \emph{underlying category} of $\mathscr D$.
\end{defi}

\begin{ex}
Let $\mathscr C$ be a category. Then $\Yo(\mathscr C)\mathrel{:=}\cat{CAT}(\blank,{\mathscr C})$ is a prederivator, called the \emph{prederivator represented by $\mathscr C$}. Its underlying category is canonically isomorphic to $\mathscr C$ (by evaluating at the unique object of $*$).
\end{ex}

\begin{ex}
Let $\mathscr D$ be a prederivator, $A$ a small category. Then $\mathscr D^A\mathrel{:=}\mathscr D(A\times\blank)$ is again a prederivator.
\end{ex}

\begin{constr}
Let $\mathscr D$ be prederivator and $I$ a small category. Then we have for any $i\in I$ an evaluation functor $\text{ev}_i\mathrel{:=} \text{incl}_i^*\colon \mathscr D(I)\to\mathscr D(*)$ and moreover, for any morphism $f\colon i\to j$ in $I$ a natural transformation $\incl_i\Rightarrow\incl_j$ inducing $\ev_i\Rightarrow\ev_j$. Together these assemble into a functor $\diag\colon\mathscr D(I)\to\mathscr D(*)^I$ called the \emph{underlying (incoherent) diagram functor}.

Similarly, if $A$ is another small category, we have a \emph{partial underlying diagram functor} $\mathscr D(A\times I)\to\mathscr D(A)^I$.
\end{constr}

Note that while this map is an isomorphism in the case of represented prederivators it will in general not even be an equivalence.

\begin{defi}
The (strict) $2$-category of prederivators has
\begin{enumerate}
\item \emph{objects} the prederivators
\item \emph{morphisms} the pseudonatural transformations
\item \emph{2-cells} the modifications. 
\end{enumerate}
\end{defi}

The following is a well-known result in $2$-category theory; for example, a very similar result has been proven by Kelly, cf.~\cite[Proposition 1.3]{kelly}. However, we couldn't find an explicit proof of the version stated below, so we briefly sketch the (classical) argument:

\begin{lemma}\label{lemma:der-quasi-inverse}
Let $F,G\colon\mathscr C\to\mathscr D$ be strict $2$-functors of strict $2$-categories and let $\sigma\colon F\Rightarrow G$ be a pseudonatural transformation, such that for each $c\in\mathscr C$ the morphism $\sigma_c$ is an equivalence in $\mathscr D$. Then $\sigma$ is an equivalence in the $2$-category of strict $2$-functors $\mathscr C\to\mathscr D$ (a \emph{pseudonatural equivalence}), i.e.~there exists a pseudonatural transformation $\tau\colon G\Rightarrow F$ such that $\sigma\tau\cong\id$ and $\tau\sigma\cong\id$. Moreover, any choice of levelwise quasi-inverses of $\sigma$ gives rise to such a quasi-inverse $\tau$.
\begin{proof}[Sketch of proof]
We fix for each $I\in\mathscr C$ a quasi-inverse $\tau_I\colon G(I)\to F(I)$ of $\sigma_I$ and (invertible) $2$-cells $\epsilon_I\colon \tau_I\sigma_I\Rightarrow\id$ and $\eta_I\colon \id\Rightarrow\sigma_I\tau_I$ exhibiting the pair $\tau_I,\sigma_I$ as an adjoint equivalence in $\mathscr D$.

Denote by $\gamma_u$ the structure isomorphisms of $\sigma$. We then define for any morphism $u\colon I\to J$ in $\mathscr C$ the $2$-cell $\gamma_u^\prime$ as the inverse of the pasting
\begin{equation*}
\begin{tikzcd}
G(I) \arrow[dr, "\id"'{name=a}, bend right=20pt] \arrow[r, "\tau_I"] & F(I) \arrow[d,"\sigma_I"] \twocell[from=a, "\scriptstyle\eta_I"{shift={(-7pt,6pt)}}] \arrow[r, "F(u)"] & F(J) \arrow[d, "\sigma_J"']\arrow[dr, "\id"{name=b}, bend left=20pt]\\
 & G(I) \twocell[ur, "\scriptstyle\gamma_u"{shift={(-7pt,6pt)}}] \arrow[r, "G(u)"'] & G(J) \twocell[to=b, "\scriptstyle\epsilon_I"{shift={(-7pt,6pt)}}]\arrow[r, "\tau_J"'] & F(J)
\end{tikzcd}
\end{equation*}
(we remark that the above is indeed invertible as a pasting of invertible cells), i.e.~the inverse of the canonical mate of $\gamma_u$ with respect to the above adjunction.

A straight-forward albeit lengthy calculation shows that this makes $\tau$ into a pseudonatural transformation and moreover that the $\eta_I$, $\epsilon_I$ assemble into invertible modifications $\id\Rrightarrow\sigma\tau$ and $\tau\sigma\Rrightarrow\id$ as desired.
\end{proof}
\end{lemma}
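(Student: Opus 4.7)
My plan is to construct the quasi-inverse componentwise and then propagate all $2$-categorical data along the chosen equivalences via the mate calculus. First, for each $I\in\mathscr{C}$ I would pick a quasi-inverse $\tau_I\colon G(I)\to F(I)$ of $\sigma_I$ and use the standard fact that any equivalence in a strict $2$-category can be promoted to an adjoint equivalence (by replacing one of the provisional $2$-cells if necessary) to obtain a unit $\eta_I\colon\id\Rightarrow\sigma_I\tau_I$ and counit $\epsilon_I\colon\tau_I\sigma_I\Rightarrow\id$ satisfying the triangle identities. Nothing in this step uses $\mathscr{C}$ or the strict $2$-functoriality of $F,G$.

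Next, for each morphism $u\colon I\to J$ in $\mathscr{C}$, I need a structure isomorphism $\gamma'_u\colon F(u)\tau_I\Rightarrow\tau_J G(u)$. The canonical candidate is the mate of $\sigma$'s structure cell $\gamma_u$ under the adjunctions $\tau_I\dashv\sigma_I$ and $\sigma_J\dashv\tau_J$; concretely one whiskers $\eta_I$ into the source, inserts $\gamma_u^{\pm 1}$ into the middle, and whiskers $\epsilon_J$ at the target, yielding an explicit pasting. Because $\gamma_u$ is invertible and every whiskered component is invertible, $\gamma'_u$ is automatically invertible, so the only content lies in the coherence checks that follow.

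Three verifications then remain: that the $\gamma'_u$ satisfy the composition and identity axioms of a pseudonatural transformation $G\Rightarrow F$; that the $\eta_I$ are intertwined by $\gamma_u$ and $\gamma'_u$ so as to give an invertible modification $\id_G\Rrightarrow\sigma\tau$; and dually for $\epsilon$. Each reduces, by substituting the definition of $\gamma'_u$ and using exactly one triangle identity to cancel an $\eta\epsilon$ zig-zag, to the corresponding axiom for $\sigma$ (respectively to $\sigma$'s pseudonaturality). Conceptually, all three checks are instances of the single statement that the mate construction is functorial with respect to the relevant double-categorical structure, so that mates of pseudonatural structure cells are pseudonatural and mates of identities under adjoint equivalences are modifications.

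The main obstacle is thus not conceptual but combinatorial: each of the three axioms unfolds into a pasting diagram with half a dozen $2$-cells, and keeping track of whiskerings and associators is where errors tend to creep in. I would either perform the computation directly via string diagrams (where the triangle identities visibly straighten a zig-zag) or, more economically, cite the result of Kelly referenced by the author, which packages exactly these verifications into a single general theorem and leaves only the pointwise choice of adjoint equivalence for the reader to make.
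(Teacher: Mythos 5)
Your proposal follows essentially the same route as the paper's own sketch: promote the levelwise equivalences to adjoint equivalences, define the structure cells of $\tau$ as (inverses of) the canonical mates of the structure cells $\gamma_u$ of $\sigma$, and reduce the coherence checks to those for $\sigma$ via the triangle identities, with Kelly's result available as a packaged alternative. The argument is correct and matches the paper's construction.
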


In particular a morphism $F\colon\mathscr D\to\mathscr E$ of prederivators is an equivalence if and only if it is so levelwise.

Prederivators on their own are not yet that interesting; what one actually studies is the following:

\begin{defi}\label{defi:derivator}
A prederivator $\mathscr D$ is called a \emph{right derivator} (resp.~\emph{left derivator}) if it satisfies the following axioms:
\begin{enumerate}
\item If $I$ is a set and $(A_i)_{i\in I}$ is a family of small categories, then the map
\begin{equation*}
(\text{ev}_i)_{i\in I}\colon\mathscr D\left(\coprod_{i\in I} A_i\right)\to\prod_{i\in I}{\mathscr D}(A_i)
\end{equation*}
is an equivalence of categories. In particular ${\mathscr D}(\varnothing)$ is equivalent to the terminal category $*$.
\item For any small category $A$ the functor $\diag\colon{\mathscr D}(A)\to{\mathscr D}(*)^A$ is conservative.
\item (\emph{Existence of homotopy Kan extensions}) For each functor $u\colon A\to B$ of small categories the restriction $u^*\colon{\mathscr D}(B)\to{\mathscr D}(A)$ has a left (resp.~right) adjoint, which we denote by $u_!$  (resp.~$u_*$).
\item (\emph{Kan extensions are pointwise}) For any functor $u\colon A\to B$ of small categories and each $b\in B$ the canonical mate transformation of the slice square
\begin{equation*}
\begin{tikzcd}
\mathscr{D}(u\downarrow b) & \arrow[l, "\text{forget}^*"'] \mathscr{D}(A)\twocell[dl]\\
\mathscr{D}(*)\arrow[u, "\pr^*"] & \arrow[l,"\ev_b"] \mathscr{D}(B)\arrow[u, "u^*"']
\end{tikzcd}
\qquad\text{resp.}\qquad
\begin{tikzcd}
\mathscr{D}(b\downarrow u) & \arrow[l, "\text{forget}^*"'] \mathscr{D}(A)\\
\mathscr{D}(*)\twocell[ur]\arrow[u, "\pr^*"] & \arrow[l,"\ev_b"] \mathscr{D}(B)\arrow[u, "u^*"']
\end{tikzcd}
\end{equation*}
(which is a natural transformation $\pr_!\circ\text{forget}^*\Rightarrow \ev_b\circ u_!$ resp.~$\ev_b\circ u_*\Rightarrow \pr_*\circ\text{forget}^*$) is an isomorphism.
\end{enumerate}

Finally, $\mathscr D$ is called a \emph{derivator} if it is both a left and a right derivator.
\end{defi}

\begin{warn}
The choice of ``left'' and ``right'' above is the original one made by Grothendieck and later adopted by Cisinski. It refers to the terminology ``left exact'' and ``right exact'' rather than ``left adjoint'' and ``right adjoint,'' so in a \emph{right} derivator the restriction functors have \emph{left} adjoints.

We remark however, that also the opposite convention is in use by some authors following Heller.
\end{warn}

\begin{ex}\label{ex:repr-derivator}
Let $\mathscr C$ be a complete and cocomplete category. Then $\Yo(\mathscr C)$ is a derivator which we accordingly call the \emph{derivator represented by $\mathscr C$}. Indeed the above should be seen as a very basic axiomatization of the classical theory of Kan extensions.
\end{ex}

A straight-forward albeit lengthy calculation shows:

\begin{lemma}
Let $F\colon\mathscr D\to\mathscr E$ be an equivalence of prederivators. Then $\mathscr D$ is a left derivator (resp.~right derivator resp.~derivator) if and only if $\mathscr E$ is.\qed
\end{lemma}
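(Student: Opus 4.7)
The plan is to use Lemma~\ref{lemma:der-quasi-inverse}: an equivalence of prederivators $F\colon\mathscr D\to\mathscr E$ admits a pseudo-inverse $G\colon \mathscr E\to\mathscr D$, and each component $F_A\colon\mathscr D(A)\to\mathscr E(A)$ is an equivalence of categories. Moreover, pseudonaturality of $F$ supplies, for every functor $u\colon A\to B$, an invertible $2$-cell $\gamma_u\colon F_A\, u^*_{\mathscr D}\Rightarrow u^*_{\mathscr E}\, F_B$. By symmetry it suffices to show that $\mathscr E$ satisfies each derivator axiom whenever $\mathscr D$ does; since the axioms for ``left'' and ``right'' derivators are formally independent, this covers all three cases of the statement at once. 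In every case the argument amounts to transporting the corresponding structure on $\mathscr D$ through $F$ using the $\gamma_u$ and the equivalences $F_A$.

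For axiom (1) the square
\[
\begin{tikzcd}
\mathscr D\bigl(\coprod_{i\in I} A_i\bigr) \arrow[r, "(\ev_i)_{i\in I}"] \arrow[d, "F_{\coprod_{i} A_i}"'] & \prod_{i\in I} \mathscr D(A_i) \arrow[d, "\prod_i F_{A_i}"]\\
\mathscr E\bigl(\coprod_{i\in I} A_i\bigr) \arrow[r, "(\ev_i)_{i\in I}"'] & \prod_{i\in I} \mathscr E(A_i)
\end{tikzcd}
\]
commutes up to the isomorphism assembled from the $\gamma_{\incl_i}$, and both vertical maps are equivalences because equivalences are closed under products. Hence the top map is an equivalence iff the bottom one is. For axiom (2), the $\gamma_{\incl_i}$ for $\incl_i\colon *\to A$ together with the naturality data for morphisms $i\to j$ in $A$ assemble into an isomorphism $\diag_{\mathscr E}\circ F_A\cong F_{*}^{A}\circ\diag_{\mathscr D}$, where $F_{*}^{A}\colon\mathscr D(*)^A\to\mathscr E(*)^A$ denotes postcomposition with the equivalence $F_{*}$; since $F_A$ and $F_{*}^{A}$ are both conservative, conservativity of $\diag_{\mathscr D}$ is equivalent to that of $\diag_{\mathscr E}$. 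For axiom (3), combining $\gamma_u$ with the adjoint equivalence data for $F_B\dashv G_B$ exhibits $u^*_{\mathscr E}$ as naturally isomorphic to $F_A\, u^*_{\mathscr D}\, G_B$; so if $u^*_{\mathscr D}$ admits a left (respectively right) adjoint $u_?^{\mathscr D}$, then $F_B\, u_?^{\mathscr D}\, G_A$ is a corresponding adjoint of $u^*_{\mathscr E}$.

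The main obstacle --- albeit purely technical --- is axiom (4). Here one has to verify that, under the identifications above, the mate transformation of the slice square in $\mathscr E$ corresponds to the mate of the slice square in $\mathscr D$, up to pasting with invertible $\gamma$'s and with unit/counit cells of the adjoint equivalences $F_\bullet\dashv G_\bullet$. This is a formal consequence of the behaviour of mates under pasting with invertible $2$-cells, but requires a careful bookkeeping of all the pseudonaturality isomorphisms. Once this identification is established, invertibility of the $\mathscr E$-mate is equivalent to invertibility of the $\mathscr D$-mate because equivalences of categories reflect isomorphisms.
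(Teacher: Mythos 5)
Your argument is correct and is precisely the ``straight-forward albeit lengthy calculation'' that the paper omits: transporting each axiom through the levelwise equivalences $F_A$ and the pseudonaturality isomorphisms $\gamma_u$, with the only delicate point being the mate bookkeeping for axiom (4), exactly as you identify. One cosmetic remark: in your treatment of axiom (2), conservativity of $F_A$ alone does not give the implication from $\diag_{\mathscr D}$ conservative to $\diag_{\mathscr E}$ conservative --- you also use that $F_A$ is essentially surjective, which of course holds since it is an equivalence.
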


While Example~\ref{ex:repr-derivator} provides a first sanity check for the axioms in Definition~\ref{defi:derivator}, this does not yet connect derivators to abstract homotopy theory.

\begin{defi}\label{defi:homotopy-der}
Let $\mathscr C$ be a category with weak equivalences. Then its \emph{homotopy prederivator} $\HOrel(\mathscr C)$ is defined as the composition of strict $2$-functors
\begin{equation*}
\cat{Cat}^\op\to\cat{CATWE}\stackrel{\Ho}{\longrightarrow}\cat{CAT},
\end{equation*}
where the first map is the obvious lift of $\Yo(\mathscr C)$, cf.~Example~\ref{ex:levelwise-we}. We remark that the second map is indeed a strict $2$-functor since we choose \emph{strict} localizations.
\end{defi}

We make $\HOrel$ into a strict $1$-functor $\cat{CATWE}\to\cat{PREDER}$ via pushforward.

In general, $\HOrel$ will be far from a derivator (and it is also not the correct notion, cf.~Example~\ref{ex:not-strong} below). However, we have the following result:

\begin{thm}[Cisinski]\label{thm:modelgood}
Let $\mathscr C$ be a model category. Then the homotopy pre\-derivator of its underlying category with weak equivalences is a derivator.
\begin{proof}
Cf.~\cite[Théorème 6.11]{modelgood}.
\end{proof}
\end{thm}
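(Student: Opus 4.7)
The plan is to verify the four axioms of Definition~\ref{defi:derivator} for $\mathscr{D} := \HOrel(\mathscr{C})$, exploiting the two-sided nature of a model category: $\mathscr{C}$ is simultaneously an ABC cofibration category (Example~\ref{ex:model-abc}) and, dually, an ABC fibration category, so by Theorem~\ref{thm:abc-diagram} and its dual, $\mathscr{C}^I$ carries both structures for every small category $I$ (both with the levelwise weak equivalences). Throughout one should reduce to the bifibrant (cofibrant-fibrant) diagrams using Proposition~\ref{prop:cof-approx-basis} and its dual; this means morphisms in $\mathscr{D}(A) = \Ho(\mathscr{C}^A)$ can be represented by honest maps between bifibrant diagrams modulo an appropriate homotopy relation.

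The first axiom is formal: $\mathscr{C}^{\coprod_i A_i} \cong \prod_i \mathscr{C}^{A_i}$ as categories with (levelwise) weak equivalences, and strict localization clearly commutes with set-indexed products, so $\mathscr{D}(\coprod_i A_i) \cong \prod_i \mathscr{D}(A_i)$ on the nose. For the conservativity axiom, represent a morphism $f$ in $\Ho(\mathscr{C}^A)$ whose underlying diagram is an isomorphism by a direct map $\widehat f\colon\widehat X\to\widehat Y$ between bifibrant diagrams; then $\diag(f)$ being an isomorphism in $\mathscr{D}(*)^A$ forces $\widehat f$ to be a pointwise weak equivalence, and standard model-categorical arguments (every pointwise weak equivalence between bifibrant objects admits a two-sided homotopy inverse) upgrade this to an isomorphism in $\Ho(\mathscr{C}^A)$.

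For the existence of Kan extensions (Axiom~3), fix $u\colon A\to B$. The ordinary left Kan extension functor $u_!\colon\mathscr{C}^A\to\mathscr{C}^B$ does not preserve weak equivalences in general, so one constructs its total left derived functor $\mathbf L u_!$ by first resolving $X\colon A\to\mathscr{C}$ by a diagram $\widehat X$ that is suitably ``Reedy cofibrant'' relative to a chosen direct-category resolution of $A$ (or, equivalently, by applying a two-sided bar construction), and then forming $u_!\widehat X$. The fact that $\mathscr{C}^A$ is an ABC cofibration category and admits Reedy cofibrant resolutions (Lemma~\ref{lemma:relative-cofibrant-replacement} on sub-diagram categories, combined with Proposition~\ref{prop:reedy-colim}) ensures this is homotopically well-behaved, and Ken Brown's Lemma gives the requisite functoriality. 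The adjunction $(\mathbf L u_!,u^*)$ then descends from the ordinary adjunction $(u_!,u^*)$. The dual construction, using the ABC fibration structure, produces the right adjoint $\mathbf R u_*$.

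Finally, Axiom~(4) demands that the mate at an object $b\in B$ of the slice square be an isomorphism, i.e.~that $\ev_b\mathbf L u_!X \cong \mathbf L\pr_!(X\circ\forget)$ naturally in $X$, where the right-hand side is a homotopy colimit over $u\downarrow b$. After choosing compatible resolutions on $A$ and on $u\downarrow b$ (which is possible because the forgetful functor is itself a direct functor), this reduces to the classical point-set identification $(u_! X)_b = \colim_{u\downarrow b}X\circ\forget$ for an honest colimit-preserving cofibrant resolution, together with a cofinality argument. The main obstacle is precisely this compatibility: one must check that the cofibrant resolution used to compute $\mathbf L u_!$ on $\mathscr{C}^A$ remains sufficiently cofibrant after restriction along $\forget\colon u\downarrow b\to A$, so that the two sides of the mate are computed by the same underlying colimit. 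This can be verified by exhibiting the resolution via an explicit bar/cobar construction, or more abstractly by invoking the relative Reedy cofibrant-replacement result Lemma~\ref{lemma:relative-cofibrant-replacement} to build resolutions coherently across the slice square.
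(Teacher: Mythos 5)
The paper offers no proof of this statement at all: it is imported verbatim from Cisinski (Théorème 6.11 of the cited article), and the remark immediately following it in the paper stresses that the result is deep precisely because $\mathscr{C}^I$ carries no canonical model structure for a general small $I$, so that even constructing the homotopy Kan extension functors is substantial work. Measured against Cisinski's actual argument, your proposal is a correct \emph{outline} of the strategy but not a proof. Axioms (1) and (2) are essentially fine, though even there the bifibrant reduction is doing real work: the assertion that strict localization ``clearly commutes with set-indexed products'' is false for general categories with weak equivalences (zig-zags in a product would need uniformly bounded length), and what saves you is the three-arrow calculus supplied by the ABC cofibration and fibration structures on each $\mathscr{C}^{A_i}$ from Theorem~\ref{thm:abc-diagram}. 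The genuine gap is in axioms (3) and (4): you name the construction of $\mathbf{L}u_!$ (``resolve relative to a direct-category resolution of $A$,'' ``two-sided bar construction'') without carrying it out, and for the pointwise formula you yourself identify the compatibility of resolutions across the slice square as ``the main obstacle'' and then defer it to a bar/cobar construction --- whose homotopical control in a bare, non-simplicial model category is itself part of what must be proved --- or to Lemma~\ref{lemma:relative-cofibrant-replacement}, which concerns a single sieve of direct categories and does not by itself produce resolutions coherent over the whole slice square. That compatibility is exactly where the content of Cisinski's theorem lives, so as written the proposal does not close the argument.

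Two remarks that would sharpen the attempt. First, there is a cheap route using only results already quoted in the paper: a model category is an ABC cofibration category (Example~\ref{ex:model-abc}) and, dually, an ABC fibration category on the \emph{same} underlying category with weak equivalences, so Theorem~\ref{thm:cof-hoder} and its dual show at once that $\HOrel(\mathscr{C})$ is both a right and a left derivator, hence a derivator; this merely relocates the difficulty into the citation for Theorem~\ref{thm:cof-hoder}, but it is the derivation that matches the paper's own toolkit. Second, the ``direct-category resolution of $A$'' you gesture at has a precise incarnation already present in the paper, namely $DA=\Delta^{\sharp}\downarrow\nerve A$ together with Proposition~\ref{prop:cof-hoder-models} and Proposition~\ref{prop:reedy-colim}; spelling out your axiom (3) in those terms would be the natural way to turn the sketch into an argument.
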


\begin{rk}
Theorem~\ref{thm:modelgood} is a deep result and even establishing the existence of general homotopy (co)limit functors is quite some work. The main reason for this is, that in general we have no canonical model structures on ${\mathscr C}^I$ for a model category $\mathscr C$ and a small category $I$. In the case of \emph{combinatorial model categories} where such a structure exists, a much simpler proof can be given, cf.~\cite[Proposition 1.36]{groth-publ}.
\end{rk}

We also have the following ``one-sided version'' of Theorem~\ref{thm:modelgood}:

\begin{thm}[Cisinski]\label{thm:cof-hoder}
Let $\mathscr C$ be an ABC cofibration category. Then the homotopy prederivator of its underlying category with weak equivalences is a right derivator.
\begin{proof}
Cf.~\cite[Corollaire 6.21${}^\op$]{approximation}.
\end{proof}
\end{thm}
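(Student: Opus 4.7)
The plan is to verify the four axioms of a right derivator in turn for $\HOrel(\mathscr C)$. Axiom (Der1) is essentially formal: the exponential $\mathscr{C}^{\coprod_i A_i}$ with levelwise weak equivalences is canonically isomorphic to $\prod_i \mathscr{C}^{A_i}$ as a category with weak equivalences, and strict localization commutes with products, so passing to $\Ho$ yields the required equivalence of categories.

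For conservativity of $\diag$ (Der2), I would invoke the saturation of weak equivalences in ABC cofibration categories, which is a separate result of Cisinski \cite[Proposition 6.2${}^\op$]{approximation}. Given a morphism $f$ in $\Ho(\mathscr{C}^A)$ each of whose components is an isomorphism in $\Ho(\mathscr{C})$, one represents $f$ by a zigzag (after Reedy cofibrant replacement on a suitable direct replacement of $A$) and applies saturation componentwise to conclude that $f$ itself is invertible in $\Ho(\mathscr{C}^A)$.

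The substantive content lies in axioms (Der3) and (Der4), i.e.\ the construction of pointwise homotopy left Kan extensions $u_!$ along an arbitrary functor $u\colon A\to B$ of small categories. The strategy is first to handle direct categories: combining Reedy cofibrant replacement with Proposition~\ref{prop:reedy-colim} realises $\colim_I\colon \mathscr{C}^I_{\text R}\to\mathscr{C}$ as an exact functor and, upon passing to $\Ho$, as a left adjoint to the constant functor. For a general $u$, one replaces the slice category $u\downarrow b$ (and indeed $A$ itself) by a cofinal direct category such as the nerve subdivision $\Delta/\nerve(u\downarrow b)$; the key technical lemma is that restriction along such a cofinal direct replacement induces an equivalence on $\Ho(\mathscr{C}^{(-)})$, so that every Kan extension is computed by a strict colimit of a Reedy cofibrant replacement.

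The main obstacle will be the pointwise axiom (Der4): one must verify that the canonical mate $\pr_!\circ\text{forget}^*\Rightarrow \ev_b\circ u_!$ is an isomorphism in $\Ho(\mathscr{C})$. After replacing everything by Reedy cofibrant diagrams on direct replacements, this amounts to a Fubini-type identification of iterated colimits, which can be read off from the compatibility of Reedy cofibrancy with the exponential law together with the preservation statement in Proposition~\ref{prop:reedy-colim}. It is precisely here that the full strength of the ABC axioms (infinite coproducts and sequential colimits) is genuinely needed in order to make Reedy cofibrant replacement available on the potentially unbounded direct categories that arise as replacements of slice categories.
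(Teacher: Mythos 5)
The paper offers no argument of its own here: its ``proof'' is a citation of Cisinski's \emph{Cat\'egories d\'erivables} (Corollaire 6.21$^\op$), so the only meaningful comparison is with that reference. Your outline does reproduce its broad architecture: homotopy colimits over direct categories via Reedy cofibrant replacement and Proposition~\ref{prop:reedy-colim}, then bootstrapping to arbitrary index categories through a direct replacement of the simplex-category type. The ``key technical lemma'' you name is exactly the equivalence $p^*\colon\Ho(\mathscr C^I)\to\Ho(\mathscr C^{DI})$ that this paper records as Proposition~\ref{prop:cof-hoder-models}-(\ref{item:p-star}), citing Cisinski's Th\'eor\`eme 6.17$^\op$; likewise the appeal to saturation for (Der2) is the intended argument (Cisinski's Proposition 6.2$^\op$).

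Two of your steps would fail as written, though. First, (Der1) is not formal: strict localization does \emph{not} commute with infinite products. The comparison functor $\Ho\bigl(\prod_i\mathscr C^{A_i}\bigr)\to\prod_i\Ho(\mathscr C^{A_i})$ is bijective on objects, but a morphism on the left is a single zigzag of some finite length while a morphism on the right is a family of zigzags with no uniform bound, so for a general category with weak equivalences this functor need not even be full (long free zigzags of unbounded length, as in Example~\ref{ex:not-strong}, give counterexamples). What makes (Der1) true is that each $\mathscr C^{A_i}$ is again an ABC cofibration category by Theorem~\ref{thm:abc-diagram}, so its homotopy category admits a two-arrow calculus of fractions with a uniformly bounded criterion for when two fractions agree; that is the actual content of the axiom. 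Second, (Der4) is not a ``Fubini-type identification of iterated colimits'': Fubini concerns $\colim_{A\times B}$ versus iterated colimits, whereas the pointwise axiom is a base-change statement identifying $\ev_b\, u_!X$ with the homotopy colimit over $u\downarrow b$. Its proof requires showing that the relevant comparison functors between the direct replacements of $A$, of $B$, and of the slices $u\downarrow b$ are homotopy cofinal (aspherical), and this asphericity argument is where most of the work in the cited reference lies; compatibility of Reedy cofibrancy with the exponential law together with Proposition~\ref{prop:reedy-colim} does not by itself produce it.
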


We will denote this right derivator by $\HOcof({\mathscr C})$. While we will be mostly be interested in the case of an actual cofibration category, we note the following:

\begin{cor}\label{cor:cof-approx}
Let $\mathscr{C}$ be an ABC cofibration category. Then the inclusion $\mathscr{C}_c\hookrightarrow\mathscr{C}$ induces an equivalence $\HOcof(\mathscr{C}_c)\to\HOcof(\mathscr{C})$.
\begin{proof}
Let $I$ be a small category. Then the levelwise weak equivalences and cofibrations make $\mathscr{C}^I$ into a cofibration category by Theorem~\ref{thm:abc-diagram}. In particular, the inclusion $(\mathscr{C}_c)^I=(\mathscr{C}^I)_c\hookrightarrow\mathscr{C}$ descends to an equivalence of homotopy categories by Proposition~\ref{prop:cof-approx-basis}; the claim follows.
\end{proof}
\end{cor}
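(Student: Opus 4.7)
The plan is to prove the equivalence one level at a time. By the remark immediately following Lemma~\ref{lemma:der-quasi-inverse}, a pseudonatural transformation of prederivators is an equivalence if and only if each of its components is an equivalence of categories; so it suffices to show, for every small category $I$, that the inclusion-induced functor $\Ho((\mathscr{C}_c)^I)\to\Ho(\mathscr{C}^I)$ is an equivalence of categories.

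For fixed $I$, I would first upgrade $\mathscr{C}^I$ to an ABC cofibration category via Theorem~\ref{thm:abc-diagram}, equipping it with the levelwise weak equivalences and levelwise cofibrations. The initial object of $\mathscr{C}^I$ under this structure is the constant $\varnothing$-diagram, so a diagram $X\colon I\to\mathscr{C}$ is cofibrant in $\mathscr{C}^I$ if and only if each value $X_i$ is cofibrant in $\mathscr{C}$; in other words, $(\mathscr{C}^I)_c=(\mathscr{C}_c)^I$ as full subcategories of $\mathscr{C}^I$. Applying Proposition~\ref{prop:cof-approx-basis} to the ABC cofibration category $\mathscr{C}^I$ then produces exactly the desired equivalence $\Ho((\mathscr{C}_c)^I)\to\Ho(\mathscr{C}^I)$.

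I do not foresee any genuine obstacle, since both the reduction to a levelwise statement and the required structural theorems have been assembled already in the preceding sections. The only minor point to keep track of is the strict (not merely up-to-equivalence) identification $(\mathscr{C}^I)_c=(\mathscr{C}_c)^I$, but this is immediate once one unravels the definitions of levelwise cofibration and of the initial object in $\mathscr{C}^I$. The full statement then follows by assembling the levelwise equivalences via Lemma~\ref{lemma:der-quasi-inverse}.
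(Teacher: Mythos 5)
Your proposal is correct and follows essentially the same route as the paper: reduce to a levelwise statement, equip $\mathscr{C}^I$ with the levelwise ABC cofibration structure via Theorem~\ref{thm:abc-diagram}, identify $(\mathscr{C}^I)_c=(\mathscr{C}_c)^I$, and conclude by Proposition~\ref{prop:cof-approx-basis}. The only difference is that you spell out the identification of cofibrant objects and the appeal to Lemma~\ref{lemma:der-quasi-inverse}, which the paper leaves implicit.
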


Finally we will use the following construction:

\begin{defi}
Let $\mathscr C$ be a quasi-category. We write $\HO_\infty({\mathscr C})$ for the prederivator given by $\HO_\infty({\mathscr C})(I)=\h({\mathscr C}^{\nerve I})$ and similarly on functors and natural transformations.
\end{defi}

Here $\h$ denotes the (unenriched) homotopy category of a given quasi-category or simplicial set, cf.~\cite[Proposition 1.2.3.1]{htt}.
Again we can extend this to a strict $1$-functor $\cat{QCAT}\to\cat{PREDER}$ in the obvious way.

In Corollary~\ref{cor:quasi-derivator} we will in particular show that this prederivator is a derivator if $\mathscr C$ is complete and cocomplete.

\begin{prop}[Joyal]\label{prop:hoinf-we}
Let $f\colon\mathscr C\to\mathscr D$ be a weak equivalence of quasi-categories. Then the induced map $\HOinf(f)\colon\HOinf(\mathscr C)\to\HOinf(\mathscr D)$ is an equivalence.
\begin{proof}
This is immediate from \cite[Proposition 1.2.7.3-(2)]{htt}.
\end{proof}
\end{prop}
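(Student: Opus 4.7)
The plan is to reduce the claim to a levelwise check and then invoke the good behaviour of categorical equivalences under exponentiation. First, by Lemma~\ref{lemma:der-quasi-inverse}, a strict morphism of prederivators is an equivalence as soon as it is so at every small category $I$. Hence it suffices to show that for every small $I$ the functor
$$\HOinf(f)(I)=\h(f^{\nerve I})\colon \h(\mathscr{C}^{\nerve I})\to \h(\mathscr{D}^{\nerve I})$$
is an equivalence of ordinary categories.

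Next, I would recall that a weak equivalence of quasi-categories is, by definition, a categorical equivalence, and that the quasi-categories are precisely the fibrant objects of the Joyal model structure on simplicial sets, in which every object is cofibrant. Since the Joyal model structure is cartesian closed, for any simplicial set $K$ the exponential functor $(\blank)^K$ preserves categorical equivalences between quasi-categories and sends quasi-categories to quasi-categories. Specializing to $K=\nerve I$ shows that $f^{\nerve I}\colon \mathscr{C}^{\nerve I}\to\mathscr{D}^{\nerve I}$ is itself a categorical equivalence of quasi-categories.

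Finally, any categorical equivalence of quasi-categories descends to an equivalence on (ordinary) homotopy categories; this is precisely the cited \cite[Proposition 1.2.7.3-(2)]{htt}, which can also be viewed as the translation through the homotopy coherent nerve of the analogous fact for equivalences of fibrant simplicial categories. Applied to $f^{\nerve I}$, this yields the required equivalence at level $I$. The only mildly delicate step is the second one, which rests on the cartesian closure of the Joyal model structure; compatibility of the levelwise quasi-inverses with the strict $2$-functorial structure of $\HOinf$ is then automatic from Lemma~\ref{lemma:der-quasi-inverse} and requires no further calculation.
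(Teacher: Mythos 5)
Your argument is correct and is essentially the paper's intended proof: reduce to a levelwise statement via Lemma~\ref{lemma:der-quasi-inverse}, then use that exponentiation by $\nerve I$ preserves categorical equivalences between quasi-categories (this is exactly the content of \cite[Proposition 1.2.7.3-(2)]{htt}, whether one deduces it from the cartesian closure of the Joyal model structure or otherwise), and finally pass to homotopy categories. The only slip is attributional: the last step --- that a categorical equivalence of quasi-categories induces an equivalence of ordinary homotopy categories --- is not itself \cite[Proposition 1.2.7.3-(2)]{htt} but an essentially definitional consequence of the notion of categorical equivalence; this does not affect the validity of your proof.
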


While the basic idea of derivator theory is, that one should exclusively think about coherent diagrams, for comparison with older approaches (e.g.~triangulated categories) it is sometimes necessary to deal with some incoherent diagrams, e.g.~morphisms in the underlying category. To apply the theory of derivators to them, we have to \emph{lift} them to coherent diagrams. This motivates the following conditions:

\begin{lud}
A small category $F$ is called \emph{free} if the following equivalent conditions are satisfied:
\begin{enumerate}
\item There exists a quiver $Q$ such that $F\cong\mathop{\text{free}}Q$ for some hence any left adjoint $\text{free}$ of the forgetful functor $\cat{Cat}\to\cat{Quivers}$.
\item There are sets $I,J$ and a pushout square
\begin{equation*}
\begin{tikzcd}
\coprod_{i\in I}\partial[1]\arrow[dr, phantom, "\ulcorner", very near end] \arrow[d] \arrow[r, "\coprod\text{incl}"] & \coprod_{i\in I}[1]\arrow[d]\\
\coprod_{j\in J}[0] \arrow[r] & F
\end{tikzcd}
\end{equation*}
in $\cat{Cat}$. Here $\partial[1]$ denotes the subcategory of $[1]$ where we remove the only non-identity morphism (i.e.~it is the discrete category with two objects).
\item There exists a $1$-skeletal simplicial set $K$ such that $F\cong\h K$.\qed
\end{enumerate}
\end{lud}

\begin{defi}
A prederivator $\mathscr D$ is called \emph{strong} if the partial underlying diagram functor $\mathscr D(A\times F)\to \mathscr D(A)^F$ is full and essentially surjective for every small category $A$ and every small free category $F$.
\end{defi}

\begin{warn}
There are various definitions of ``strong'' in the literature and the above one is a bit more restrictive than the usual ones: normally one requires the above only for \emph{finite} free categories or sometimes even only for $F=[1]$. In interesting cases the underlying diagram functor is \emph{never} faithful (except for discrete categories). Explicit examples for how this fails can be found e.g.~in \cite[Section 7.5]{groth}.
\end{warn}

Again an easy calculation shows:

\begin{lemma}
Let $\mathscr D\to\mathscr E$ be an equivalence of prederivators. Then $\mathscr D$ is strong if and only if $\mathscr E$ is.\qed
\end{lemma}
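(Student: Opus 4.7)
The plan is to reduce the statement to the following: for each small category $A$ and each small free category $F$, the partial underlying diagram functor $\mathscr D(A\times F)\to\mathscr D(A)^F$ is full and essentially surjective if and only if the corresponding functor for $\mathscr E$ is. By Lemma~\ref{lemma:der-quasi-inverse} I may replace the given levelwise equivalence by a genuine pseudonatural equivalence $\sigma\colon\mathscr D\to\mathscr E$, so that each component $\sigma_I$ is an equivalence of categories.

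The key observation is that the square
\begin{equation*}
\begin{tikzcd}
\mathscr D(A\times F)\arrow[r,"\diag"]\arrow[d,"\sigma_{A\times F}"'] & \mathscr D(A)^F\arrow[d,"(\sigma_A)_*"]\\
\mathscr E(A\times F)\arrow[r,"\diag"'] & \mathscr E(A)^F
\end{tikzcd}
\end{equation*}
commutes up to natural isomorphism, and that both vertical arrows are equivalences of categories: $\sigma_{A\times F}$ by construction, and $(\sigma_A)_*$ as postcomposition with an equivalence. Once this is established, the top horizontal map is full and essentially surjective precisely when the bottom is, by a routine two-out-of-three style argument. Swapping the roles of $\mathscr D$ and $\mathscr E$ then finishes the proof.

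The only mildly technical point, which I expect to be the main obstacle, is unpacking the commutativity of the square up to a coherent natural isomorphism. For fixed $X\in\mathscr D(A\times F)$ the two composites produce functors $F\to\mathscr E(A)$ whose values at $f\in F$ are $\sigma_A\bigl((\id_A\times\incl_f)^*X\bigr)$ and $(\id_A\times\incl_f)^*\bigl(\sigma_{A\times F}X\bigr)$, respectively; these are canonically identified by the structure $2$-cell of $\sigma$ at the functor $\id_A\times\incl_f\colon A\to A\times F$. Naturality in $f$ amounts to compatibility of these structure cells with the natural transformations $\id_A\times\incl_f\Rightarrow\id_A\times\incl_{f'}$ induced by morphisms of $F$, and is encoded in the axioms of a pseudonatural transformation; naturality in $X$ is automatic from functoriality. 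Nothing deeper than a careful reading of the definitions should be required.
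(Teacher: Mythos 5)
Your argument is correct and is exactly the ``easy calculation'' the paper leaves to the reader: the structure $2$-cells of the pseudonatural equivalence at the functors $\id_A\times\incl_f$ make the square with the two partial diagram functors commute up to natural isomorphism, the verticals are equivalences, and fullness together with essential surjectivity transfer along such a square (with the converse obtained from a quasi-inverse via Lemma~\ref{lemma:der-quasi-inverse}). Nothing is missing.
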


\begin{ex}
We will show that almost all homotopy prederivators discussed above are strong: namely for the case of quasi-categories this is Proposition~\ref{prop:hoinf-strong}, and for ABC cofibration categories (which includes both model and cofibration categories) we prove this as Corollary~\ref{cor:hocof-strong}.
\end{ex}

\begin{ex}\label{ex:not-strong}
The homotopy prederivator of a general category with weak equivalences need not be strong (even in the weakest sense): as an example, let $\mathscr C$ be
\begin{equation*}
A\longrightarrow B\buildrel\sim\over\longleftarrow C\longrightarrow D
\end{equation*}
(which is even homotopical). Then $\Ho\mathscr C$ contains a morphism $A\to D$ but this is not in the essential image of $\Ho({\mathscr C}^{[1]})\to(\Ho\mathscr C)^{[1]}$: namely, a (non-strict) localization of $\mathscr C$ is given by collapsing the arrow $C\to B$; hence neither $A$ nor $D$ are isomorphic to any other object of $\Ho(\mathscr C)$. Since there is no arrow $A\to D$ in $\mathscr C$ the claim follows.

We remark that in view of Proposition~\ref{prop:hoinf-strong} this tells us that $\HOrel(\mathscr{C})$ does not arise as the homotopy prederivator of \emph{any} quasi-category. In particular, the lower right triangle of the diagram in the introduction does \emph{not} commute up to equivalence.
\end{ex}

\begin{ex}
There are also naturally arising derivators that are not strong, even with respect to the weakest definition mentioned above, cf.~\cite[Example 5.5]{non-strong}.
\end{ex}

\section{Szumi\l{}o's quasi-category of frames}
We recall Szumilo's definition of the \emph{quasi-category of frames} $\nerve_f(\mathscr{C})$ associated to a cofibration category $\mathscr C$ along with the most important results as developed in \cite{szumilo} and then subsequently published as \cite{szumilo-frames, szumilo-cofcat, szumilo-equi}.

\begin{constr}\label{constr:d}
Let $K$ be a (possibly large) simplicial set. We define a homotopical category $DK$ (the ``thick barycentric subdivision of $K$'') as follows: objects of $DK$ are pairs $(n,\sigma)$ where $n\in\mathbb{N}$ and $\sigma\in K_n$ is an $n$-simplex. A morphism $(m,\sigma)\to(n,\tau)$ is an injective monotone map $i\colon[m]\to[n]$ such that $i^*\tau=\sigma$.

We have a simplicial map $p\colon\nerve(DK)\to K$ as follows: an $n$-simplex 
\begin{equation*}
(\sigma_0,k_0)\xrightarrow{i_0}\cdots\xrightarrow{i_{n-1}}(\sigma_n,k_n)
\end{equation*}
of $\nerve(DK)$ is sent to $f^*\sigma_n$ where $f\colon [n]\to[k_n]$ is the (not necessarily injective) monotone map given by $f(j)=(i_{n-1}\cdots i_{j})(n_j)$; in particular we have $f(n)=k_n$. A straightforward calculation shows that this is indeed a simplicial map.

We now take the weak equivalences in $DK$ to be the smallest class of maps closed under $2$-out-of-$6$ and containing all the maps that are sent by $p$ to degenerate edges.

We observe that $D$ becomes a functor $\cat{SSET}\to\cat{HCAT}$ via pushforward.
\end{constr}

We further remark that $DI$ is actually a direct category; a possible degree functor is given by $(n,\sigma)\mapsto n$. Moreover, its latching categories all always finite.

\begin{rk} In most situations $K$ will be the nerve of some small category $I$ in which case we simply write $DI\mathrel{:=}D(\nerve I)$. We note that as a category $DI$ is simply the slice $\Delta^\sharp\downarrow I$ where $\Delta^\sharp\subset\Delta$ is the subcategory of injective monotone maps, i.e.~objects of $DI$ are functors $[n]\to I$ for varying $n\in\mathbb{N}$ and a morphism from $X\colon[m]\to I$ to $Y\colon[n]\to I$ is an injective monotone map $i\colon[m]\to [n]$ such that $X=Y\circ i$.

By full faithfulness of the nerve, $p\colon\nerve DI\to \nerve I$ is induced by a functor $DI\to I$ which we denote by $p$ again. This functor can be described explicitly as follows: an object $X\colon[m]\to I$ is sent to $X(m)$ and a morphism from $X$ to $Y\colon[n]\to I$ is sent to $Y(i(m)\to n)$.

A morphism in $DI$ is a weak equivalence if and only if its image under $p$ is an isomorphism, cf.~\cite[Lemma 2.4]{szumilo-frames}.
\end{rk}

\begin{lemma}\label{lemma:representability}
\begin{enumerate}
\item As a functor $\cat{SSET}\to\cat{CAT}$ (i.e.~disregarding the homotopical structure) $D$ preserves ($\textbf{V}$-small) colimits.
\item For any category $I$, any functor $K_\bullet\colon I\to\cat{SSET}$, and any cofibration category $\mathscr C$ the induced bijection
\begin{equation*}
\Hom_{\cat{CAT}}\big(D(\colim\nolimits_I K_\bullet), \mathscr C\big)\to\lim\nolimits_I\Hom_{\cat{CAT}}\big(D(K_\bullet),\mathscr C\big)
\end{equation*}
restricts to bijections between the corresponding subsets of Reedy cofibrant respectively homotopical diagrams.
\end{enumerate}
\begin{proof}
The first statement is \cite[Lemma 2.5]{szumilo-frames} and the second one is \cite[proof of Proposition 2.6]{szumilo-frames}.
\end{proof}
\end{lemma}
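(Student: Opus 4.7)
My plan for (1) is to show that $D$ is a left adjoint, whereupon cocontinuity is automatic. The right adjoint $R\colon\cat{CAT}\to\cat{SSET}$ should be defined by $(R\mathscr{C})_n\mathrel{:=}\Hom_{\cat{CAT}}(D(\Delta^n),\mathscr{C})$, with simplicial structure induced by $D$ applied to the coface and codegeneracy maps of $\Delta$. To verify the adjunction, I would unwind a simplicial map $K\to R\mathscr{C}$ as a family of functors $F_\sigma\colon D(\Delta^n)\to\mathscr{C}$, one for each simplex $\sigma\colon\Delta^n\to K$, coherent with respect to face and degeneracy maps. Such a family should assemble into a unique functor $X\colon DK\to\mathscr{C}$: on objects, $X(n,\sigma)\mathrel{:=} F_\sigma(n,\id_{[n]})$; on morphisms, an injection $i\colon(m,i^*\tau)\to(n,\tau)$ in $DK$ is sent to $F_\tau$ applied to the injection $(m,i)\to(n,\id_{[n]})$ in $D(\Delta^n)$. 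Simplicial compatibility of the family translates directly into well-definedness and functoriality.

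For (2), the central observation I would establish is that both the latching category and the generating weak equivalences at a simplex depend only on its dimension, not on the ambient simplicial set. Concretely, for $\sigma\in K_n$ the assignment $j\mapsto(m,j^*\sigma)$ should give an isomorphism $\mathscr{L}_{(n,\sigma)}(DK)\cong\mathscr{L}_{(n,\id_{[n]})}(D(\Delta^n))$---essentially the poset of non-identity injections into $[n]$---and the generating weak equivalences into $(n,\sigma)$, namely the injections $f\colon[m]\hookrightarrow[n]$ with $f(m)=n$, correspond bijectively to their analogues in $D(\Delta^n)$. Since colimits of simplicial sets are computed levelwise, every $\sigma\in K_n$ admits some (not necessarily unique) lift $\sigma_i\in(K_i)_n$, and under the above identifications the latching object $L_{(n,\sigma)}X$, its latching map, and the generating edges at $(n,\sigma)$ all coincide with the corresponding data for $X_i\mathrel{:=} X|_{DK_i}$ at $(n,\sigma_i)$. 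It then follows that Reedy cofibrancy of $X$ at $(n,\sigma)$ is equivalent to Reedy cofibrancy of $X_i$ at $(n,\sigma_i)$, so $X$ is Reedy cofibrant iff every $X_i$ is.

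For the homotopical half I would use that weak equivalences in $\mathscr{C}$ are closed under $2$-out-of-$6$ to reduce being homotopical to sending the generating edges above to weak equivalences, and then apply the same lift argument: every generating edge of $DK$ comes from a generating edge of some $DK_i$, so $X$ preserves weak equivalences iff each $X_i$ does. The step I expect to be most delicate is carrying out the combinatorial identification of the latching categories and the generating edges cleanly. Beyond that, one must observe that non-uniqueness of lifts $\sigma_i$ is harmless, since the conditions tested depend only on the simplex as an object of $\mathscr{C}$ via $X$ and not on the choice of preimage; once this is in place, both bijections fall out mechanically.
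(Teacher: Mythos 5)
The paper offers no argument for this lemma at all---it simply cites Szumi\l{}o---so your self-contained proof is necessarily a different route, and it is essentially the correct one. For (1), exhibiting $D$ as a left adjoint via $(R\mathscr{C})_n=\Hom_{\cat{CAT}}(D\Delta^n,\mathscr{C})$ is clean and works as you describe; it is worth noticing that your $R$ is exactly the ``unrestricted'' version of $\nerve_f$, i.e.~the same mechanism by which the paper later deduces representability of the frame functor \emph{from} this lemma. For (2), your identification of $\mathscr{L}_{(n,\sigma)}(DK)$ with the category of non-identity injections into $[n]$, independently of $K$ and $\sigma$, is correct (a morphism into $(n,\sigma)$ is determined by its underlying injection $i$, since its source is forced to be $(m,i^*\sigma)$), and since colimits of simplicial sets are computed levelwise every simplex of $\colim_I K_\bullet$ lifts to some $K_i$; together these do give that $X$ is Reedy cofibrant if and only if each restriction $X_i$ is, and likewise for the generating edges. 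The one step you must make explicit is in the homotopical half: reducing ``homotopical'' to ``sends the generating edges (those with $i(m)=n$) to weak equivalences'' requires that the weak equivalences of $\mathscr{C}$ be closed under $2$-out-of-$6$, which is \emph{not} among the stated axioms of an (ABC) cofibration category in this paper (only $2$-out-of-$3$ is). The fact is true---weak equivalences of an ABC cofibration category are saturated by a theorem of Cisinski, and Szumi\l{}o builds $2$-out-of-$6$ into his definition---but it genuinely cannot be avoided here: a general weak equivalence of $D(\colim_I K_\bullet)$ is produced by $2$-out-of-$6$ deductions whose intermediate morphisms need not all lift to a single $DK_i$, so one cannot argue by lifting weak equivalences themselves. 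With that one reference supplied, both restricted bijections go through as you describe.
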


The following will be very useful, in particular in conjunction with Lemma~\ref{lemma:relative-cofibrant-replacement}:

\begin{lemma} Let $f\colon K\to L$ be a map of (possibly large) simplicial sets. Then $(Df)^*\colon\mathscr{C}^{DL}\to\mathscr{C}^{DK}$ preserves Reedy cofibrations. Moreover, if $f$ is injective, then $Df$ is a sieve.
\begin{proof}
For the first statement cf.~\cite[proofs of Lemma 2.1 and Proposition 2.6]{szumilo-frames}; the second statement is immediate from inspection.
\end{proof} 
\end{lemma}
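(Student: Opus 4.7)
My plan is to prove both parts by direct combinatorial analysis of the categories $DK$ and $DL$. For part 1, the key observation I would exploit is that for any simplicial set $K$ and any object $(m,\sigma)\in DK$, the latching category $\mathscr{L}_{(m,\sigma)}(DK)$ depends, up to canonical isomorphism, only on $m$. Indeed, an object of $DK\downarrow(m,\sigma)$ is a triple $((m',\sigma'),i)$ where $i\colon[m']\to[m]$ is injective monotone and $\sigma'=i^*\sigma$; since $\sigma'$ is determined by $i$ and $\sigma$, the forgetful assignment $((m',\sigma'),i)\mapsto i$ identifies $\mathscr{L}_{(m,\sigma)}(DK)$ with the poset $P_m$ of proper injective monotone maps into $[m]$ (on morphisms, a factorization $j\circ k=i$ automatically satisfies $k^*j^*\sigma=i^*\sigma$, so no extra condition appears).

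From this, $Df$ restricts at every object to an isomorphism $\mathscr{L}_{(m,\sigma)}(DK)\xrightarrow{\cong}\mathscr{L}_{(m,f(\sigma))}(DL)$, both being canonically the same $P_m$. Consequently, for any diagram $X\in\mathscr{C}^{DL}$, the composite $\mathscr{L}_{(m,\sigma)}(DK)\to DK\xrightarrow{(Df)^*X}\mathscr{C}$ is identified with $\mathscr{L}_{(m,f(\sigma))}(DL)\to DL\xrightarrow{X}\mathscr{C}$, and hence $L_{(m,\sigma)}((Df)^*X)$ exists and agrees canonically with $L_{(m,f(\sigma))}X$ compatibly with latching maps. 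Applying the same argument to the relative latching maps $X_{(m,f(\sigma))}\amalg_{L_{(m,f(\sigma))}X}L_{(m,f(\sigma))}Y\to Y_{(m,f(\sigma))}$ of a morphism $g\colon X\to Y$ shows that $(Df)^*$ preserves both Reedy cofibrant diagrams and Reedy cofibrations between them.

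For part 2, I would assume that every $f_n\colon K_n\to L_n$ is injective. Faithfulness of $Df$ is immediate from the description of morphisms in $DK,DL$. For fullness, a morphism $i\colon(m',f(\sigma'))\to(m,f(\sigma))$ in $DL$ satisfies $i^*f(\sigma)=f(\sigma')$; since simplicial maps commute with face operators and $f_{m'}$ is injective, this forces $i^*\sigma=\sigma'$, yielding the required preimage in $DK$. The sieve condition is similar: any morphism $(n,\tau)\to(m,f(\sigma))$ in $DL$ is given by an injective monotone $i\colon[n]\to[m]$ with $\tau=i^*f(\sigma)=f(i^*\sigma)$, so $(n,\tau)=Df(n,i^*\sigma)$ lies in the image of $Df$.

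The only substantive obstacle is recognizing that latching categories in $DK$ are essentially combinatorial gadgets depending only on the degree parameter $m$; once this identification is in hand, the preservation of Reedy cofibrancy reduces to routine bookkeeping and the sieve statement is practically immediate from the definitions.
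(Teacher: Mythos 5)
Your proof is correct. The paper itself gives no argument here: it cites Szumi\l{}o's proofs of Lemma 2.1 and Proposition 2.6 for the first statement and declares the second immediate. Your key observation --- that the latching category of $DK$ at $(m,\sigma)$ is canonically the poset of proper injective monotone maps into $[m]$, independent of $\sigma$ and even of $K$ (the compatibility condition $k^*\sigma_2=\sigma_1$ being automatic for morphisms in the slice), so that $Df$ restricts to isomorphisms of latching categories and hence identifies latching objects and relative latching maps --- is exactly the mechanism underlying the cited results, so you have in effect supplied the proof the paper delegates to the reference. One small point to add for the second statement: a sieve in the paper's sense is a fully faithful \emph{embedding} with the condition stated for the honest image, so you should also record that $Df$ is injective on objects; this is immediate since $(m,\sigma)\mapsto(m,f(\sigma))$ and each $f_m$ is injective. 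With that remark included, both parts are complete.
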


Together these results imply:

\begin{dac}[Szumilo]
Let $\mathscr C$ be a cofibration category. Then the functor $\cat{SSET}\to\cat{SET}$ given by the assignment
\begin{equation}\label{eq:functor}
K\mapsto\{\text{Reedy cofibrant homotopical diagrams $DK\to\mathscr C$}\}
\end{equation}
together with the obvious restriction maps is representable by a large simplicial set, given explicitely by
\begin{equation*}
(\nerve_f\mathscr C)_n=\{\text{Reedy cofibrant homotopical diagrams $D[n]\to\mathscr C$}\}.
\end{equation*}
$\nerve_f(\mathscr C)$ is called the \emph{quasi-category of frames} of $\mathscr C$.
\begin{proof}
Since $\cat{SSET}$ is a (large) presheaf category, it suffices to show that (\ref{eq:functor}) sends (not necessarily small) colimits to limits. This is immediate from Lemma~\ref{lemma:representability}.
\end{proof}
\end{dac}

Since Reedy cofibrant and homotopical diagrams are stable under pushforward, $\nerve_f$ becomes a functor into  $\cat{SSET}$. The following says in particular that the above name is not ill-chosen:

\begin{thm}[Szumi\l{}o]
The functor $\nerve_f$ takes values in cocomplete quasi-categories and cocontinuous functors.
\begin{proof}
Cf.~\cite[Theorem 3.1]{szumilo-equi}.
\end{proof}
\end{thm}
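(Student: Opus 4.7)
My plan proceeds in two main stages. The fact that $\nerve_f(\mathscr{C})$ is a quasi-category is Szumi\l{}o's original motivating result: given an inner horn inclusion $\Lambda^n_k\hookrightarrow\Delta^n$ with $0<k<n$, the induced map $D(\Lambda^n_k)\hookrightarrow D[n]$ is a homotopical sieve, and its complement in $D[n]$ consists of only the identity $[n]\to[n]$ and the $k$-th face $d^k\colon[n-1]\hookrightarrow[n]$. One extends a given Reedy cofibrant homotopical diagram across this sieve by factoring the latching map at $d^k$, and then the one at $\id_{[n]}$, as a cofibration followed by a weak equivalence using axiom~(\ref{item:abc-factorization}); the latching objects exist and are cofibrant by Proposition~\ref{prop:reedy-colim}, and homotopicality in the sense of Construction~\ref{constr:d} can be arranged by a careful choice of factorization.

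For cocompleteness, I would show that for every map $F\colon K\to\nerve_f(\mathscr{C})$ of (small) simplicial sets there exists a colimit cone $\bar F\colon K^\triangleright\to\nerve_f(\mathscr{C})$ extending $F$, where $K^\triangleright=K\star\Delta^0$. By representability, $F$ corresponds to a Reedy cofibrant homotopical diagram $X\colon DK\to\mathscr{C}$, and any such $\bar F$ corresponds to a Reedy cofibrant homotopical extension $\bar X$ of $X$ along the sieve $DK\hookrightarrow D(K^\triangleright)$. Such an extension can be built by induction up the degree filtration of the complement: at each newly added object of $D(K^\triangleright)$ one chooses its value by factoring the map out of the (automatically cofibrant) latching object as a cofibration followed by a weak equivalence, invoking axiom~(\ref{item:abc-factorization}); Reedy cofibrancy is then built in by construction.

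To verify that $\bar X$ represents an $\infty$-categorical colimit I would use the Joyal characterization that $\bar F$ is a colimit cone exactly when the restriction $\nerve_f(\mathscr{C})_{\bar F/}\to\nerve_f(\mathscr{C})_{F/}$ is a trivial Kan fibration. Under the representability bijection, each such lifting problem unfolds to a Reedy cofibrant homotopical extension problem along a sieve of the form $D(K\star\partial\Delta^{n+1})\hookrightarrow D(K\star\Delta^{n+1})$ subject to a partial datum prescribing $\bar X$ on the initial cone. This extension problem is again solved by the same inductive latching construction, with the partial compatibility accommodated by first applying Lemma~\ref{lemma:relative-cofibrant-replacement} to replace the existing data by a Reedy cofibrant homotopical diagram restricting to the prescribed one.

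The main technical obstacle is that for non-finite $K$ the inductive construction requires infinite coproducts and transfinite sequential colimits of cofibrations in $\mathscr{C}$ to assemble the latching data at objects of unbounded degree, which is exactly where one needs the full strength of the notion of ``cofibration category'' as opposed to the classical Brown-style notion from Warning~\ref{warn:cof}. Finally, cocontinuity of $\nerve_f(F)$ for an exact functor $F\colon\mathscr{C}\to\mathscr{D}$ is essentially formal: such $F$ preserves the Reedy-latching colimits, pushouts along cofibrations, and sequential colimits used in constructing $\bar X$ (by Proposition~\ref{prop:reedy-colim} and the definition of exactness), so the pushforward of a colimit cone built as above is again a colimit cone of the same shape.
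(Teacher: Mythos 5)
The paper offers no proof of this statement beyond the citation of Szumi\l{}o, so the comparison is with his argument in the cited reference. Your first stage is in the right spirit and close to his: one does extend Reedy cofibrant homotopical diagrams along the sieve $D(\Lambda^n_k)\hookrightarrow D[n]$ by an inductive latching/factorization argument using axiom~(\ref{item:abc-factorization}), arranging homotopicality by the choice of factorization. Note only that with the paper's Construction~\ref{constr:d} the objects of $D[n]$ are \emph{all} monotone maps $[m]\to[n]$, so the complement of $D(\Lambda^n_k)$ consists of all surjections onto $[n]$ or onto $[n]\setminus\{k\}$, not just the two nondegenerate simplices you name; the induction on degree still goes through.

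The cocompleteness argument has a genuine gap. A Reedy cofibrant homotopical extension of $X$ along $DK\hookrightarrow D(K^{\triangleright})$ produced by ``factoring latching maps'' is merely \emph{some} cone, not a colimit cone: the cone point is a $0$-simplex of $K^{\triangleright}$, hence has degree $0$ and empty latching category, so your procedure assigns it an essentially arbitrary object of $\mathscr{C}$. Already for $K=\varnothing$ this would make every object of $\nerve_f(\mathscr{C})$ initial. Consequently the lifting problems against $\nerve_f(\mathscr{C})_{\bar F/}\to\nerve_f(\mathscr{C})_{F/}$ are \emph{not} unconstrained sieve-extension problems solvable by Lemma~\ref{lemma:relative-cofibrant-replacement}; their solvability \emph{is} the universal property, and it forces the cone point to be (weakly equivalent to) the genuine colimit of a Reedy cofibrant replacement of the diagram, whose existence and preservation by exact functors is the content of Proposition~\ref{prop:reedy-colim}. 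Szumi\l{}o's proof avoids arbitrary $K$ altogether: he exhibits small coproducts and pushouts in $\nerve_f(\mathscr{C})$, computed by coproducts of objects and by pushouts along cofibrations in $\mathscr{C}$, and then invokes the criterion that a quasi-category with small coproducts and pushouts admits all small colimits, cf.~\cite[Proposition 4.4.2.6]{htt}; cocontinuity of $\nerve_f(F)$ for exact $F$ then follows from the preservation of these two constructions together with the companion criterion \cite[Proposition 4.4.2.7]{htt}. Your final paragraph on cocontinuity inherits the same gap, since it presupposes that the cone you constructed is a colimit cone.
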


We can also describe the equivalences in this quasi-category; for this we will use the following variant of Construction~\ref{constr:d}:

\begin{constr}
Let $I$ be a homotopical category and denote by $\forget I$ its underlying category. We define $DI$ to be the following homotopical category: as a category we take $DI=D(\forget I)$ and the weak equivalences are created by $p\colon DI\to I$.
\end{constr}

\begin{lemma}\label{lemma:eq-in-nf}
A morphism in $\nerve_f(\mathscr{C})$ given by $f\colon D[1]\to\mathscr{C}$  is an equivalence if and only if it is homotopical when regarded as $D\widehat{[1]}\to\mathscr{C}$.
\begin{proof}
Cf.~\cite[Corollary 3.7]{szumilo-frames}.
\end{proof}
\end{lemma}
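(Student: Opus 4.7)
This is a biconditional characterization of equivalences in $\nerve_f(\mathscr{C})$ in purely cofibration-categorical terms. I would prove the two directions by rather different arguments, both relying on the standard characterization that a $1$-simplex of a quasi-category is an equivalence if and only if it admits a homotopy inverse witnessed by two $2$-simplices; in $\nerve_f(\mathscr{C})$ such a $2$-simplex is, by definition, a Reedy cofibrant homotopical diagram $D[2]\to\mathscr{C}$ whose boundary matches the prescribed data.

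For the ``if'' direction, suppose $f$ sends every morphism of $D[1]$ to a weak equivalence in $\mathscr{C}$. The idea is to build an explicit homotopy inverse $g\colon D[1]\to\mathscr{C}$ together with the required witnessing $2$-simplices. A candidate inverse can be produced by a formal manipulation of the data of $f$ using the symmetry of $\widehat{[1]}$; the hypothesis that $f$ is homotopical on $D\widehat{[1]}$ guarantees that all the arrows needed for Reedy cofibrancy and levelwise homotopicality of $g$ are already weak equivalences in $\mathscr{C}$. Once $g$ has been built, I would assemble the boundary data for each witnessing $2$-simplex as a Reedy cofibrant homotopical diagram $D(\partial[2])\to\mathscr{C}$ from $f$, $g$, and appropriate constant diagrams, and then invoke Lemma~\ref{lemma:relative-cofibrant-replacement} applied to the sieve $D(\partial[2])\hookrightarrow D[2]$ to obtain a Reedy cofibrant homotopical extension to $D[2]$ rel.\ boundary.

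For the ``only if'' direction, suppose $f$ is an equivalence in $\nerve_f(\mathscr{C})$. I need to show every morphism $\phi$ of $D[1]$ is sent by $f$ to a weak equivalence. By Cisinski's saturation theorem for ABC cofibration categories, this reduces to showing each $f(\phi)$ becomes an isomorphism in $\Ho(\mathscr{C})$. Combining the combinatorics of $D[1]$ with $2$-out-of-$3$, one can reduce the check to those $\phi$ whose image under $p\colon D[1]\to[1]$ is the non-identity edge $0\to 1$: the remaining morphisms of $D[1]$ are already weak equivalences in the original structure (their image under $p$ being an identity), and hence are automatically sent to weak equivalences because $f$ is homotopical in the original sense. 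For the critical morphisms, I would extract a homotopy inverse $g$ and witnessing $2$-simplices $\sigma,\tau$ from the equivalence assumption, evaluate these Reedy cofibrant homotopical diagrams on the relevant vertices of $D[2]$ sitting over $0,1\in[1]$, and invoke $2$-out-of-$6$ on the resulting zigzags to conclude that each critical $f(\phi)$ becomes invertible in $\Ho(\mathscr{C})$.

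The principal obstacle is managing the combinatorics of $D[1]$ and $D[2]$ — both infinite despite arising from finite simplicial sets — and producing Reedy cofibrant homotopical diagrams with prescribed boundary data. Constructing the candidate inverse $g$ in the ``if'' direction is the most delicate step, and making sure that the formal manipulation used to build it yields an honestly Reedy cofibrant, honestly homotopical diagram is where the full strength of the hypothesis on $D\widehat{[1]}$ (as opposed to just $D[1]$) gets used. In the ``only if'' direction, the obstacle is more conceptual: one must translate the abstract existence of a homotopy inverse in the quasi-category $\nerve_f(\mathscr{C})$ into concrete weak equivalences in $\mathscr{C}$ suitable for applying $2$-out-of-$6$, and this is precisely where Cisinski's saturation result does the heavy lifting.
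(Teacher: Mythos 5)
First, note that the paper does not prove this lemma at all: it is quoted from Szumi\l{}o (\cite[Corollary 3.7]{szumilo-frames}), so you are really reconstructing an external argument. Your ``only if'' direction is essentially sound: reducing to the morphisms of $D[1]$ lying over the non-identity edge of $[1]$, and then using saturation of the weak equivalences of a cofibration category to upgrade ``invertible in $\Ho(\mathscr{C})$'' to ``weak equivalence,'' is exactly the right mechanism, and the $2$-out-of-$3$/$2$-out-of-$6$ bookkeeping you allude to does go through (every critical morphism factors, up to $p$-identity morphisms, through $0_*\colon(0\colon[0]\to[1])\to(\id\colon[1]\to[1])$).

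The ``if'' direction, however, has a genuine gap. Lemma~\ref{lemma:relative-cofibrant-replacement} does not do what you ask of it: it produces a Reedy cofibrant replacement of a diagram that is \emph{already defined} on all of $D[2]$, extending a given replacement over the sieve $D(\partial\Delta^2)\hookrightarrow D[2]$. It does not solve the extension problem of filling a prescribed Reedy cofibrant homotopical boundary $D(\partial\Delta^2)\to\mathscr{C}$ to $D[2]$ when no target diagram on $D[2]$ is in sight --- and manufacturing such a target is not innocent, since $N(D(\partial\Delta^2))$ is a circle, so you cannot, say, cone off to a colimit. Producing fillers of this kind is precisely the nontrivial content of Szumi\l{}o's fibrancy results for $\nerve_f$, so your plan smuggles the hard part back in. (The construction of the candidate inverse $g$ via the ``symmetry of $\widehat{[1]}$'' is also only gestured at, though it can be made precise using the reversal involution on $\Delta$.) The efficient route, and the one the cited reference takes, avoids explicit $2$-simplices entirely in both directions: a $1$-simplex of a quasi-category is an equivalence if and only if its class in the homotopy category is invertible, the functor $\theta\colon\h\nerve_f(\mathscr{C})\to\Ho(\mathscr{C})$ of Construction~\ref{constr:theta} is an equivalence (Lemma~\ref{lemma:theta-natural}) sending $[f]$ to the zig-zag $f(0)\to f(\id_{[1]})\xleftarrow{\sim}f(1)$, and saturation plus the $2$-out-of-$3$ combinatorics above translate invertibility of that zig-zag into $f$ being homotopical on $D\widehat{[1]}$. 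I would recommend restructuring your argument around $\theta$ rather than around horn-filling.
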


\begin{thm}[Szumi\l{}o]\label{thm:frame-equiv}
The Joyal model structure restricts to the structure of a fibration category on the subcategory $\cat{QCAT}_!$ of cocomplete quasi-categories with cocontinuous functors as morphisms.

Moreover, the functor
\begin{equation*}
\nerve_f\colon\cat{COFCAT}\to\cat{QCAT}_!
\end{equation*}
is a weak equivalence of fibration categories (in particular exact).
\begin{proof}
cf.~\cite[Theorem 4.9]{szumilo-equi}.
\end{proof}
\end{thm}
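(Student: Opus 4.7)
The statement has two parts, and I would tackle them in order. For the fibration category structure on $\cat{QCAT}_!$, the starting point is that $\cat{QCAT}$ equipped with the Joyal fibrations, weak equivalences, and all isofibrations already forms a fibration category (a consequence of the Joyal model structure on $\cat{SSET}$ and the fact that fibrant objects in a model category inherit a fibration category structure). The plan is therefore to verify that each of the fibration category axioms restricts to the non-full subcategory $\cat{QCAT}_!$. Concretely this means checking: the terminal object $*$ is cocomplete; products of cocomplete quasi-categories are cocomplete with cocontinuous projections; pullbacks of cocomplete quasi-categories along isofibrations between cocomplete quasi-categories exist in $\cat{QCAT}_!$ (i.e.\ the pullback remains cocomplete and the structural maps are cocontinuous); and every cocontinuous functor between cocomplete quasi-categories factors within $\cat{QCAT}_!$ as a Joyal acyclic cofibration followed by an isofibration. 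The factorization axiom is the main technical obstacle: the mapping path space construction in the Joyal model structure produces an intermediate quasi-category that a priori need not be cocomplete, and we must argue that colimits can be constructed ``pointwise'' along the factorization using cocompleteness of source and target.

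For the second part, that $\nerve_f$ is a weak equivalence of fibration categories, my approach would be to first verify exactness and then invoke an approximation-style criterion. Exactness of $\nerve_f$ follows from Lemma~\ref{lemma:representability}: the functor sends the terminal cofibration category to the terminal simplicial set, preserves small limits of diagrams of cofibration categories (since Reedy cofibrancy and homotopicality are checked pointwise), and carries Szumi\l{}o's fibrations of cofibration categories to Joyal isofibrations between cocomplete quasi-categories. The crux is then that $\nerve_f$ should induce an equivalence on the underlying homotopy $1$-categories $\Ho(\cat{COFCAT})\to\Ho(\cat{QCAT}_!)$; by Cisinski's theorem \cite{approximation} this can be upgraded to a weak equivalence of fibration categories provided $\nerve_f$ satisfies an approximation property.

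Establishing that equivalence has two sides: essential surjectivity and fullness/faithfulness on homotopy classes. For essential surjectivity, given a cocomplete quasi-category $\mathscr{Q}$ one would like to produce a cofibration category $\mathscr{C}$ and a cocontinuous equivalence $\nerve_f(\mathscr{C})\simeq\mathscr{Q}$; the natural candidate is to extract $\mathscr{C}$ from $\mathscr{Q}$ via a strictification that rectifies the coherent composition in $\mathscr{Q}$ to an honest category carrying a cofibration structure (for instance, by taking the image of $\mathscr{Q}$ under an appropriate adjoint to $\nerve_f$ and restricting to cofibrant objects). For the fully faithful part, one would use Lemma~\ref{lemma:eq-in-nf} together with the universal property of frames from Lemma~\ref{lemma:representability} to identify maps out of $\nerve_f(\mathscr{C})$ in $\cat{QCAT}_!$ with cocontinuous exact functors out of $\mathscr{C}$.

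The hardest step by a wide margin is the essential surjectivity: constructing a cofibration category whose frames recover a given cocomplete quasi-category is a delicate rectification problem. Every other step is either a direct verification or a formal consequence of the $2$-categorical machinery already in place, but this one is genuinely the content of Szumi\l{}o's theorem and would require a careful analysis of the left adjoint to $\nerve_f$ combined with a detailed cofibrancy argument to ensure the output really is a cofibration category in the sense above.
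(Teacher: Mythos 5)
The paper does not prove this statement at all: it is imported wholesale as a citation to Szumi\l{}o's work, so there is no internal argument to compare yours against. Judged on its own terms, your plan correctly identifies the architecture of Szumi\l{}o's proof --- fibration category structure on $\cat{QCAT}_!$ by restriction from the Joyal model structure, exactness of $\nerve_f$, then an approximation-property criterion in the style of Cisinski rather than a direct construction of an inverse --- and you are right that the factorization axiom and the essential surjectivity are where the real work lies. But as written the proposal has two concrete gaps that are not merely deferred detail.

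First, your exactness argument misuses Lemma~\ref{lemma:representability}: that lemma concerns $D$ and colimits of \emph{simplicial sets}, and says nothing about $\nerve_f$ preserving limits of diagrams of cofibration categories; moreover exactness of a functor of fibration categories is about preserving the terminal object, fibrations, acyclic fibrations and pullbacks along fibrations, so you cannot avoid engaging with Szumi\l{}o's actual definition of fibrations of cofibration categories (which this paper deliberately never spells out). Second, and more seriously, your route to essential surjectivity --- ``take the image of $\mathscr{Q}$ under an appropriate adjoint to $\nerve_f$'' --- rests on an adjoint that does not exist; $\nerve_f$ is not a right adjoint in any useful sense. Szumi\l{}o's actual argument instead associates to a cocomplete quasi-category a cofibration category of suitable (Reedy-cofibrant, finite direct) diagrams in it and shows that taking colimits induces an equivalence back to the quasi-category; this construction, together with the verification of the approximation properties, is the entire content of the theorem and cannot be reached from the ingredients you cite (Lemma~\ref{lemma:eq-in-nf} only identifies the equivalences inside $\nerve_f(\mathscr{C})$ and gives no universal property for cocontinuous maps out of it). So the plan is a reasonable table of contents for Szumi\l{}o's proof, but the step you yourself flag as hardest is not just hard --- the specific mechanism you propose for it would fail.
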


We finish with some statements about diagrams in cofibration categories and their associated quasi-categories.

\begin{prop}[Szumi\l{}o \& Kapulkin]\label{prop:pr-equiv}
Let $K, L$ be simplicial sets, $\mathscr C$ a cofibration category. Then $(D\pr_1,D\pr_2)\colon D(K\times L)\to DK\times DL$ induces a well-defined and exact functor ${\mathscr C}^{DK\times DL}_{\text{R}}\to \mathscr C^{D(K\times L)}_{\text{R}}$ and this is a weak equivalence.
\begin{proof}
The case $K=\Delta^m$, $L=\Delta^n$ appears as \cite[Proposition 4.5]{frame-loc}. The general case follows now from \cite[proof of Lemma 4.4]{frame-loc}.
\end{proof}
\end{prop}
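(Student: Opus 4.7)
The plan is to reduce the statement to the case $K=\Delta^m$, $L=\Delta^n$, which is already handled by the cited \cite[Proposition~4.5]{frame-loc}. Every simplicial set is the canonical colimit of its category of simplices, and I will express both $\mathscr{C}^{D(K\times L)}_{\text{R}}$ and $\mathscr{C}^{DK\times DL}_{\text{R}}$ as limits over $(\Delta\downarrow K)\times(\Delta\downarrow L)$ of the corresponding categories for pairs $(\Delta^m,\Delta^n)$, with the comparison functor levelwise given by the simplex case.

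\textbf{Limit description.} Writing $K$ and $L$ as canonical colimits of their simplices and using that $\times$ in $\cat{SSET}$ commutes with colimits in each variable yields $K\times L=\colim\Delta^m\times\Delta^n$ indexed over $(\Delta\downarrow K)\times(\Delta\downarrow L)$. Since $D$ preserves colimits by Lemma~\ref{lemma:representability}(1), this gives $D(K\times L)=\colim D(\Delta^m\times\Delta^n)$, and passing to $\mathscr{C}^{(-)}$ converts this into a limit which, by Lemma~\ref{lemma:representability}(2), restricts to Reedy cofibrant homotopical diagrams. On the other side, iterating the exponential law and applying Lemma~\ref{lemma:representability}(2) to the cofibration categories $\mathscr{C}^{D\Delta^m}_{\text{R}}$ and $\mathscr{C}^{DL}_{\text{R}}$ similarly rewrites $\mathscr{C}^{DK\times DL}_{\text{R}}$ as the limit of $\mathscr{C}^{D\Delta^m\times D\Delta^n}_{\text{R}}$ over the same indexing. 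The functor $(D\pr_1,D\pr_2)^*$ is compatible with these decompositions and is therefore the limit of the simplex-level comparison functors, from which well-definedness and exactness follow immediately.

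\textbf{Weak equivalence and main obstacle.} To promote the levelwise weak equivalences of the simplex case to a weak equivalence of the two limit cofibration categories, I would follow the argument in \cite[proof of Lemma~4.4]{frame-loc} and induct along skeletal filtrations of $K$ and $L$. Attaching a non-degenerate simplex along its boundary inclusion $\partial\Delta^n\hookrightarrow\Delta^n$ induces on the $D$-side the inclusion of a sieve, so Lemma~\ref{lemma:relative-cofibrant-replacement} provides compatible Reedy cofibrant extensions and each induction step is expressible as a pullback in $\cat{COFCAT}$ along a fibration in the structure of Theorem~\ref{thm:frame-equiv}. The simplex case then yields the desired weak equivalence at every stage. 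The main obstacle is precisely the verification that these pullbacks preserve weak equivalences: the Reedy cofibrancy hypothesis is indispensable because it is what makes restriction along the sieves $D(\partial\Delta^n)\hookrightarrow D\Delta^n$ well-behaved, and one has to invoke Szumi\l{}o's fibration-category structure on $\cat{COFCAT}$ in a nontrivial way to assemble the transfinite induction into a single weak equivalence.
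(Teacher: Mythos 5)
Your reconstruction matches the paper's (purely citational) proof: the simplex case is exactly \cite[Proposition 4.5]{frame-loc}, and the reduction via the canonical colimit over categories of simplices, Lemma~\ref{lemma:representability}, and skeletal induction using that restrictions along the sieves $D(\partial\Delta^n)\hookrightarrow D\Delta^n$ are fibrations in $\cat{COFCAT}$ is precisely the content of the cited proof of \cite[Lemma 4.4]{frame-loc}. One small slip: the fibration-category structure on $\cat{COFCAT}$ you invoke is the one from Szumi\l{}o's theorem quoted earlier in the paper (citing \cite[Theorem 2.8]{szumilo-cofcat}), not Theorem~\ref{thm:frame-equiv}, which concerns $\cat{QCAT}_!$.
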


\begin{constr}\label{constr:frame-diag}
Let $\mathscr C$ be a cofibration category and let $K$ be a simplicial set. We will construct a simplicial map
\begin{equation*}
\Phi\colon \nerve_f\left({\mathscr C}^{DK}_{\text{R}}\right)\to\nerve_f({\mathscr C})^K
\end{equation*}
as follows: on $n$-simplices $\Phi$ is given by
\begin{align*}
\nerve_f\left({\mathscr C}^{DK}_{\text{R}}\right)_n ={}& R\left(D[n], {\mathscr C}^{DK}_{\text{R}}\right)\\
\cong{}& R(DK\times D[n], {\mathscr C})\\
\to{}& R(D(K\times[n]), {\mathscr C})\\
\cong{}& \Hom\big(K\times \Delta^n, \nerve_f({\mathscr C})\big)\\
={}& \left(\nerve_f({\mathscr C})^K\right)_n.
\end{align*}
Here $R(J,{\mathscr D})$ denotes the set of Reedy cofibrant homotopical diagrams $J\to\mathscr D$, the isomorphisms come from the definition of $\nerve_f$ respectively the obvious adjunction and the remaining arrow is restriction along $D(K\times[n])\to DK\times D[n]$.
\end{constr}

\begin{thm}[Szumi\l{}o \& Kapulkin]\label{prop:phi-equiv}
The map from Construction~\ref{constr:frame-diag} is well-defined and an equivalence.
\begin{proof}
Cf.~\cite[Corollary 4.16]{frame-loc}.
\end{proof}
\end{thm}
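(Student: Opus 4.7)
The two displayed bijections in the construction are formal: the first is the exponential isomorphism $\mathscr{C}^{DK\times D[n]}_{\text{R}}\cong(\mathscr{C}^{DK}_{\text{R}})^{D[n]}_{\text{R}}$ recorded earlier in the paper, and the second is the defining representability of $\nerve_f$ combined with the cartesian closure of $\cat{SSET}$. The content sits in the middle arrow, restriction along $(D\pr_1,D\pr_2)\colon D(K\times[n])\to DK\times D[n]$: by Proposition~\ref{prop:pr-equiv} this restriction sends Reedy cofibrant homotopical diagrams to Reedy cofibrant homotopical diagrams, hence descends to a map of the stated hom-sets. Simpliciality of $\Phi$ in $[n]$ is then automatic from naturality of $(D\pr_1,D\pr_2)$ in $[n]$, and naturality in $\mathscr{C}$ is immediate.

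\textbf{Reduction to $K=\Delta^n$.} Both functors $K\mapsto \nerve_f(\mathscr{C}^{DK}_{\text{R}})$ and $K\mapsto \nerve_f(\mathscr{C})^K$ take colimits in $\cat{SSET}$ to limits of simplicial sets: the former by Lemma~\ref{lemma:representability} together with the definition of $\nerve_f$, the latter by cartesian closure. For an injective simplicial map $f$, the functor $Df$ is a sieve, and Lemma~\ref{lemma:relative-cofibrant-replacement} together with the exactness of $\nerve_f$ as a functor of fibration categories (Theorem~\ref{thm:frame-equiv}) should turn strict pullbacks in $\cat{COFCAT}$ along these restrictions into honest \emph{homotopy} pullbacks of Joyal fibrations of cocomplete quasi-categories. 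Running an induction over the skeletal filtration of $K$, together with two-out-of-three for Joyal equivalences, then reduces the problem to the case $K=\Delta^n$.

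\textbf{Base case.} For $K=\Delta^n$, at simplicial level $m$ the map $\Phi$ is the object part of
\[
(D\pr_1,D\pr_2)^*\colon \mathscr{C}^{D[n]\times D[m]}_{\text{R}}\longrightarrow\mathscr{C}^{D([n]\times[m])}_{\text{R}},
\]
a weak equivalence of cofibration categories by Proposition~\ref{prop:pr-equiv}. Letting $(n,m)$ vary assembles these into a map of bisimplicial sets whose rows for fixed $n$ are exactly the $\Phi$'s we care about. Applying $\nerve_f$ in the $m$-direction yields, by Theorem~\ref{thm:frame-equiv}, Joyal equivalences $\nerve_f(\mathscr{C}^{D[n]\times D[m]}_{\text{R}})\simeq\nerve_f(\mathscr{C}^{D([n]\times[m])}_{\text{R}})$; a diagonal/realization comparison of bisimplicial sets then deduces that each individual $\Phi$ is itself a Joyal equivalence of (large) quasi-categories.

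\textbf{Main obstacle.} The hardest part is the cellular induction: one genuinely needs $\nerve_f$ to send the relevant pushouts in $\cat{COFCAT}$ along sieve-type restrictions to \emph{homotopy} pullbacks in the Joyal model structure, not merely to strict pullbacks. Setting this up requires assembling the sieve property of $D$ applied to injective simplicial maps, the exactness of $\nerve_f$ as a functor of fibration categories, and the fibration-category structure on $\cat{QCAT}_!$ from Theorem~\ref{thm:frame-equiv}. Once these compatibilities are established, the remainder of the argument is essentially formal bookkeeping.
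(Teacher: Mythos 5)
The paper does not prove this statement at all: it is quoted from Kapulkin--Szumi\l{}o and the ``proof'' is a citation of \cite[Corollary 4.16]{frame-loc}. So any honest argument is ``different from the paper's''; what matters is whether yours is sound. Your well-definedness paragraph is correct and matches what Construction~\ref{constr:frame-diag} needs (the only content is that $(D\pr_1,D\pr_2)^*$ preserves Reedy cofibrant homotopical diagrams, which is part of Proposition~\ref{prop:pr-equiv}). Your reduction to $K=\Delta^n$ is also the right overall strategy and is essentially how the cited proof is organized, though you should be aware that it leans on a fact this paper deliberately avoids: that for a monomorphism $L\hookrightarrow K$ the restriction $\mathscr{C}^{DK}_{\text{R}}\to\mathscr{C}^{DL}_{\text{R}}$ is a \emph{fibration} in Szumi\l{}o's fibration category structure on $\cat{COFCAT}$ (the paper explicitly declines to describe those fibrations), so that exactness of $\nerve_f$ turns the strict pullbacks from Lemma~\ref{lemma:representability} into pullbacks along Joyal fibrations between quasi-categories, hence homotopy pullbacks. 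That can be supplied from \cite{szumilo-cofcat}, but it is not ``formal bookkeeping'' from the statements quoted in this paper.

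The genuine gap is the base case. Knowing that each levelwise map
$R(D[n]\times D[m],\mathscr{C})\to R(D([n]\times[m]),\mathscr{C})$
is the object part of a weak equivalence of cofibration categories tells you nothing directly about the simplicial map $\Phi_{\Delta^n}$, whose $m$-simplices these sets are: a weak equivalence of cofibration categories need not be bijective, or even essentially surjective, on objects, and the rows of your bisimplicial set are exactly the maps you are trying to control. The proposed fix --- apply $\nerve_f$ ``in the $m$-direction'' and invoke a diagonal/realization comparison --- does not work: a realization argument would at best say something about the diagonal of the bisimplicial set, not about its individual rows, and in any case levelwise-to-diagonal arguments of this kind are a feature of the Kan--Quillen model structure and fail for Joyal (categorical) equivalences, which is the notion of equivalence you need here. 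The actual argument in \cite{frame-loc} handles $\Delta^n$ differently, e.g.\ by reducing along the (inner anodyne) spine inclusion $\mathrm{Sp}^n\hookrightarrow\Delta^n$ --- using that both $K\mapsto\nerve_f(\mathscr{C}^{DK}_{\text{R}})$ and $K\mapsto\nerve_f(\mathscr{C})^K$ send it to a categorical equivalence, the former because $\mathscr{C}^{D\Delta^n}_{\text{R}}\to\mathscr{C}^{D\mathrm{Sp}^n}_{\text{R}}$ is a weak equivalence of cofibration categories and $\nerve_f$ preserves weak equivalences (Theorem~\ref{thm:frame-equiv}) --- together with a direct analysis in dimensions $0$ and $1$. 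Some such input, beyond Proposition~\ref{prop:pr-equiv} applied levelwise, is unavoidable; as written, your base case does not close.
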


\section{The comparison result}
\subsection{Homotopy right derivators of cofibration categories}
We recall from Theorem~\ref{thm:cof-hoder} that for any cofibration category $\mathscr C$ its homotopy prederivator $\HOcof(\mathscr C)$ is a right derivator. We begin by introducing a ``thickened'' variant of this for which we will need:

\begin{prop}\label{prop:cof-hoder-models}
Let $\mathscr C$ be a cofibration category and $I$ a small category.
\begin{enumerate}
\item The inclusion ${\mathscr C}^{DI}_{\text{R}}\hookrightarrow{\mathscr C}^{DI}$ is a weak equivalence.\label{item:reedy-repl}
\item The map $p^*\colon{\mathscr C}^I\to{\mathscr C}^{DI}$ (recall Construction~\ref{constr:d}) is a weak equivalence.\label{item:p-star}
\end{enumerate}
\begin{proof}
The first statement is a special case of \cite[Proposition 1.7-(3)]{szumilo-frames}. The second map is trivially exact and it descends to an equivalence of homotopy categories by \cite[Theorem 9.5.8-(1)]{cof}, or alternatively \cite[Théorème 6.17${}^\op$ and Lemme 2.5${}^\op$]{approximation}.
\end{proof}
\end{prop}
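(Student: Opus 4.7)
My plan is to handle the two parts separately, and I expect part (2) to be the main obstacle.

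For part (1), the inclusion $\iota\colon\mathscr{C}^{DI}_{\text{R}}\hookrightarrow\mathscr{C}^{DI}$ is clearly exact, since any Reedy cofibration in a diagram category over a direct category is in particular a levelwise cofibration and the weak equivalences are levelwise on both sides. To show $\Ho(\iota)$ is an equivalence, I would first establish essential surjectivity by applying Lemma~\ref{lemma:relative-cofibrant-replacement} to the empty sieve $\emptyset\hookrightarrow DI$: for any homotopical $X\colon DI\to\mathscr{C}$ this produces a Reedy cofibrant homotopical $\widehat X$ with a levelwise weak equivalence $\widehat X\to X$, hence an isomorphism in $\Ho(\mathscr{C}^{DI})$. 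For fullness and faithfulness between Reedy cofibrant $X,Y$, the standard calculus-of-fractions argument applies: any zigzag $X\xleftarrow{\sim}Z\to Y$ in $\mathscr{C}^{DI}$ can be replaced, via relative Reedy cofibrant replacement of $Z$ along the sieve containing the endpoints, by one living in $\mathscr{C}^{DI}_{\text{R}}$; identification of two such zigzags is handled by the analogous argument applied to cylinder objects.

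For part (2), I would exploit the structural observation that $p\colon DI\to I$, sending $X\colon[m]\to I$ to the top vertex $X(m)$, has contractible homotopical fibers: for each $i\in I$, the subcategory of simplices $X$ with $X(m)=i$ admits the constant $0$-simplex $i_0\colon[0]\to I$ at $i$ as an initial object, and the weak equivalences of $DI$ are created by $p$. Combined with part (1), I would construct a candidate quasi-inverse to $\Ho(p^*)$ by sending a Reedy cofibrant homotopical $Y\colon DI\to\mathscr{C}$ to the $I$-diagram $i\mapsto Y(i_0)$, with functoriality in $i$ coming from the $1$-simplices $[1]\to I$ representing the morphisms of $I$ together with the corresponding edges in $DI$.

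The main obstacle is verifying that this candidate genuinely inverts $p^*$ on $\Ho$. In one direction one needs that for any Reedy cofibrant homotopical $Y\colon DI\to\mathscr{C}$ and every simplex $\sigma$ ending at $i$, the comparison map $Y(i_0)\to Y(\sigma)$ is a weak equivalence, naturally in a sense compatible with morphisms; this is exactly where fiber contractibility enters, together with the fact that weak equivalences in $DI$ are created by $p$. The cleanest formal route is to recognise $p^*$ as an instance of Cisinski's approximation theorem for exact functors between ABC cofibration categories and conclude directly; the combinatorial input reduces to the initial-object observation above. The converse direction (that $Y\mapsto Y(-_0)$ composed with $p^*$ is naturally weakly equivalent to the identity) is automatic, since for a diagram of the form $p^*X$ the value on $i_0$ is precisely $X(i)$.
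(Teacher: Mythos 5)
Your part (1) is essentially correct and is the standard argument behind the reference the paper cites for this step (Szumi\l{}o's Proposition 1.7-(3)): exactness is clear because Reedy cofibrations over a direct category are in particular levelwise cofibrations, essential surjectivity follows from Lemma~\ref{lemma:relative-cofibrant-replacement} applied to the empty sieve, and full faithfulness from relative Reedy cofibrant replacement of the intermediate terms of fractions and of the homotopies between them.

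Part (2), however, contains a genuine gap. Your candidate quasi-inverse $Y\mapsto\big(i\mapsto Y(i_0)\big)$ is not a well-defined functor $\Ho(\mathscr{C}^{DI})\to\Ho(\mathscr{C}^{I})$. The assignment $i\mapsto i_0$ does not extend to a functor $I\to DI$: a morphism $i_0\to j_0$ in $DI$ is an injective monotone map $[0]\to[0]$ over which $j_0$ restricts to $i_0$, which forces $i=j$. Encoding functoriality through the zig-zags $Y(i_0)\to Y(\sigma_f)\xleftarrow{\ \sim\ }Y(j_0)$ therefore only produces, for each $Y$, an \emph{incoherent} diagram, i.e.\ an object of $\Ho(\mathscr{C})^{I}$ rather than of $\Ho(\mathscr{C}^{I})$; rectifying such incoherent data to a coherent diagram is exactly the difficulty the proposition is meant to overcome, and it cannot be waved away. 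Your combinatorial input is also not quite right: $i_0$ is weakly initial but not initial in the full subcategory of simplices with top vertex $i$ (the degenerate $1$-simplex at $i$ receives two distinct maps from $i_0$), and even a correct contractibility statement for the fibers does not by itself produce a quasi-inverse --- one needs either a strict section of $p$ (which exists for $I=[n]$, cf.\ Corollary~\ref{cor:incl-d}, but not for general $I$) or the machinery of homotopy left Kan extension along $p$ combined with an asphericity argument. This is precisely the content of the results the paper invokes here, namely \cite[Theorem 9.5.8-(1)]{cof} or \cite[Th\'eor\`eme 6.17${}^\op$ and Lemme 2.5${}^\op$]{approximation}; your closing appeal to Cisinski's approximation theorem points at the right circle of ideas, but verifying the approximation properties for $p^*$ is the actual work and is not supplied by the initial-object observation.
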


\begin{cor}\label{cor:incl-d}
Let $n\ge 0$. Then the map $i\colon [n]\to D[n]$, that sends $a$ to the inclusion $[a]\hookrightarrow[n]$, induces a quasi-inverse to $p^*\colon\mathscr C^{I}\to\mathscr C^{DI}$. In particular, $i^*$ descends to an equivalence on homotopy categories.
\begin{proof}
By functoriality $i^*p^*=\id$ and the claim follows from Proposition~\ref{prop:cof-hoder-models}-(\ref{item:p-star}).
\end{proof}
\end{cor}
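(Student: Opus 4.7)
The plan is to reduce this to Proposition~\ref{prop:cof-hoder-models}-(\ref{item:p-star}) by verifying the single identity $p \circ i = \id_{[n]}$ at the level of the underlying categories. Unwinding the definitions, $i(a)$ is the object $[a]\hookrightarrow[n]$ of $D[n]$, and by the explicit description of $p\colon DJ\to J$ from the remark following Construction~\ref{constr:d}, this object is sent to the image of the last vertex $a\in[a]$, namely $a\in[n]$. A similar bookkeeping check on morphisms (each $a\le b$ in $[n]$ lifts to the evident injection $[a]\hookrightarrow[b]$, whose image under $p$ is the map $a\to b$) shows $p\circ i = \id_{[n]}$ as functors.

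By (strict) contravariant functoriality of the assignment $u\mapsto u^*$, this equation immediately yields $i^*\circ p^* = \id$ on the nose as an endofunctor of $\mathscr{C}^{[n]}$. In particular, $i^*\colon \mathscr{C}^{D[n]} \to \mathscr{C}^{[n]}$ is a one-sided strict inverse to $p^*$; moreover, $i^*$ is trivially homotopical, since the levelwise weak equivalences are preserved by restriction along any functor.

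Passing to homotopy categories and applying the strict $2$-functoriality of $\Ho$ gives $\Ho(i^*)\circ\Ho(p^*)=\id$. By Proposition~\ref{prop:cof-hoder-models}-(\ref{item:p-star}), $\Ho(p^*)$ is already an equivalence of categories, so a one-sided (strict) inverse to it is automatically a quasi-inverse; hence $\Ho(i^*)$ is a quasi-inverse of $\Ho(p^*)$, which is precisely the claim. There is no real obstacle here — the only substantive input is the equivalence statement for $p^*$ (which is the deep result imported from~\cite{cof} or~\cite{approximation}); everything else is formal, which is why the corollary occupies just a few lines.
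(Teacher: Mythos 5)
Your proof is correct and follows the same route as the paper: the one-line identity $i^*p^*=\id$ (coming from $p\circ i=\id_{[n]}$, which you verify explicitly) combined with Proposition~\ref{prop:cof-hoder-models}-(\ref{item:p-star}) and the standard fact that a one-sided inverse of an equivalence is a quasi-inverse. The paper's proof is exactly this, stated more tersely.
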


We remark that it is also possible to prove the corollary directly by elementary means, cf.~\cite[proof of Lemma~3.2]{szumilo-frames}.

\begin{defi}
Let $\mathscr C$ be a cofibration category. Then we have a $1$-functor
\begin{equation*}
\HOD(\mathscr C)\colon\cat{Cat}^\op \to \cat{CAT}
\end{equation*}
given by $\HOD(\mathscr C)(I)=\Ho({\mathscr C}^{DI})$ together with the obvious restrictions.
\end{defi}

We will now extend this to a prederivator.

\begin{rk}
In the following we will often appeal to the calculus of mates as a coherent way of inverting equivalences. In some cases it will be convenient to not keep track of the direction of various natural isomorphisms; since we always mention the respective adjunctions explicitly, no ambiguity will arise from this.
\end{rk}

\begin{constr}\label{constr:homotopic-eval}
Let $\mathscr C$ be a cofibration category and $I$ a small category. We have a commutative diagram
\begin{equation*}
\begin{tikzcd}[column sep=.75in]
\Ho\big(\mathscr{C}^{DI\times D[0]}\big)\arrow[d, "{(D\pr_1,D\pr_2)^*}"'] & \Ho\big(\mathscr{C}^{DI}\big)\arrow[l, "\pr_1^*"'] \arrow[d,"\id"]\\
\Ho\big(\mathscr{C}^{D(I\times[0])}\big) & \Ho\big(\mathscr{C}^{DI}\big).\arrow[l, "D(\pr_1)^*"]
\end{tikzcd}
\end{equation*}
An honest inverse of the lower map is given by $D(\id,0)^*$ and a quasi-inverse to the top map is given by $(\id,[0]\to[0])^*$. Here and in what follows we denote a functor $A\to B$ constant at some object $b\in B$ simply by $b$; in other words, the chosen quasi-inverse of the top map sends a diagram $X\colon DI\times D[0]\to\mathscr{C}$ to the diagram $\widetilde X\colon DI\to\mathscr{C}$ with $\widetilde X(f\colon[n]\to I)=X(f,[0]\to[0])$.

We upgrade these two pairs to adjoint equivalences with the original maps as right adjoints in such a way that for the lower adjunction both unit and counit are the identity and in the case of the upper adjunction the counit is the identity. We pass to canonical mates with respect to these adjunctions, yielding
\begin{equation}\label{diag:h-e-mate}
\begin{tikzcd}[column sep=.75in]
\Ho\big(\mathscr{C}^{DI\times D[0]}\big)\arrow[d, "{(D\pr_1,D\pr_2)^*}"']\arrow[r,"{(\id,[0]\to[0])^*}"] & \Ho\big(\mathscr{C}^{DI}\big)\arrow[d,"\id"]\\
\Ho\big(\mathscr{C}^{D(I\times[0])}\big)\arrow[r, "{D(\id,0)^*}"']\twocell[ur] & \Ho\big(\mathscr{C}^{DI}\big),
\end{tikzcd}
\end{equation}
and this natural transformation is an isomorphism because both adjunctions are adjoint equivalences. Now we have for any $j\colon [0]\to[1]$ a commutative diagram
\begin{equation}\label{diag:h-e-easy}
\begin{tikzcd}[column sep=.75in]
\Ho\big(\mathscr{C}^{DI\times D[1]}\big)\arrow[d, "{(D\pr_1,D\pr_2)^*}"']\arrow[r, "{(\id\times Dj)^*}"] & \Ho\big(\mathscr{C}^{DI\times D[0]}\big)\arrow[d,"{(D\pr_1,D\pr_2)^*}"]\\
\Ho\big(\mathscr{C}^{D(I\times[1])}\big)\arrow[r, "D(\id\times j)^*"'] & \Ho\big(\mathscr{C}^{D(I\times[0])}\big)
\end{tikzcd}
\end{equation}
and pasting with (\ref{diag:h-e-mate}) finally yields a natural isomorphism filling
\begin{equation*}
\begin{tikzcd}[column sep=1.25in]
\Ho\big(\mathscr{C}^{DI\times D[1]}\big)\arrow[d, "{(D\pr_1,D\pr_2)^*}"'] \arrow[r, "{\ev_j=(\id,j\colon [0]\to[1])^*}"] & \Ho\big(\mathscr{C}^{DI}\big)\arrow[d,"\id"]\\
\Ho\big(\mathscr{C}^{D(I\times[1])}\big)\arrow[r, "\ev_{j(0)}"'] & \Ho\big(\mathscr{C}^{DI}\big).
\end{tikzcd}
\end{equation*}
\end{constr}

\begin{constr}
Let $\mathscr C$ be a cofibration category, $f,g\colon I\to J$ functors of small categories, and let $\tau\colon f\Rightarrow g$ be a natural transformation between them, which we identify with a functor $t\colon I\times[1]\to J$. We will now construct a natural transformation $\tau_D^*\colon (Df)^*\Rightarrow (Dg)^*$ as follows: by functoriality we have a commutative diagram
\begin{equation}\label{diag:two-cells-top}
\begin{tikzcd}[column sep=large]
&\arrow[dl, "(Df)^*"', bend right=20pt] \Ho(\mathscr{C}^{DJ})\arrow[d, "(Dt)^*"] \arrow[dr, "(Dg)^*", bend left=20pt]\\
\Ho(\mathscr{C}^{DI}) & \arrow[l, "\ev_0"] \Ho(\mathscr{C}^{D(I\times[1])}) \arrow[r, "\ev_1"'] &\Ho(\mathscr{C}^{DI}).
\end{tikzcd}
\end{equation}
Moreover, the previous construction gives us a diagram
\begin{equation}\label{diag:two-cells-before}
\begin{tikzcd}[column sep=large]
\Ho(\mathscr C^{DI}) &\arrow[l, "\ev_0"'] \Ho(\mathscr C^{D(I\times[1])})\twocell[dl] \arrow[r, "\ev_1"] \twocell[dr]& \Ho(\mathscr C^{DI})\\
\Ho(\mathscr C^{DI})\arrow[u, "\id"] &\arrow[l,"\ev_{d_1}"] \Ho(\mathscr C^{DI\times D[1]})\arrow[u, "\simeq"] \arrow[r, "\ev_{d_0}"'] & \Ho(\mathscr C^{DI})\arrow[u, "\id"']
\end{tikzcd}
\end{equation}
filled with natural isomorphisms. The vertical maps are equivalences, the only non-trivial case being accounted for by Proposition~\ref{prop:pr-equiv}. Accordingly, we can pass to canonical mates (viewing them as right adjoints and choosing trivial units and counits for the outer ones) to get
\begin{equation*}
\begin{tikzcd}[column sep=large]
\Ho(\mathscr C^{DI})\arrow[d, "\id"'] \twocell[dr] & \arrow[l, "\ev_0"'] \Ho(\mathscr C^{D(I\times[1])}) \arrow[d, "\simeq"] \arrow[r, "\ev_1"] & \Ho(\mathscr C^{DI}) \arrow[d,"\id"] \twocell[ld]\\
\Ho(\mathscr C^{DI}) & \arrow[l, "\ev_{d_1}"] \Ho(\mathscr C^{DI\times D[1]}) \arrow[r, "\ev_{d_0}"'] & \Ho(\mathscr C^{DI}).
\end{tikzcd}
\end{equation*}
and moreover these natural transformations are actually isomorphisms. We invert the right hand transformation yielding
\begin{equation}\label{diag:two-cells-mates}
\begin{tikzcd}[column sep=large]
\Ho(\mathscr C^{DI})\arrow[d, "\id"'] \twocell[dr] & \arrow[l, "\ev_0"'] \Ho(\mathscr C^{D(I\times[1])}) \arrow[d, "\simeq"] \arrow[r, "\ev_1"] & \Ho(\mathscr C^{DI}) \arrow[d,"\id"]\\
\Ho(\mathscr C^{DI}) & \arrow[l, "\ev_{d_1}"] \Ho(\mathscr C^{DI\times D[1]}) \arrow[r, "\ev_{d_0}"']\twocell[ur] & \Ho(\mathscr C^{DI}).
\end{tikzcd}
\end{equation}
On the other hand we have a diagram
\begin{equation}\label{diag:two-cells-basis}
\begin{tikzcd}[column sep=large]
\Ho(\mathscr{C}^{DI})\arrow[dd,"\id"'] & \arrow[l, "\ev_{d_1}"'] \Ho(\mathscr{C}^{DI\times D[1]})\arrow[d, "(DI\times i)^*"'] \arrow[r, "\ev_{d_0}"] & \Ho(\mathscr{C}^{DI})\arrow[dd, "\id"]\\
& \arrow[dl, "\ev_0"'] \Ho(\mathscr{C}^{DI\times[1]})\twocell[ur, yshift=-1ex] \arrow[dr, "\ev_1"]\\
\Ho(\mathscr{C}^{DI})\arrow[dr, "\id"']\twocell[rr] & & \Ho(\mathscr{C}^{DI})\arrow[dl, "\id"]\\
& \Ho(\mathscr{C}^{DI})
\end{tikzcd}
\end{equation}
with $i$ from Corollary~\ref{cor:incl-d} and where the transformation in the top right square is the inverse of the natural isomorphism given on $X\colon DI\times D[1]\to\mathscr C$ by $1_*\colon X(\blank, 1:[0]\to[1])\to X(\blank,\id\colon[1]\to[1])$, and the natural transformation populating the lower diamond is the obvious one.

We now define $\tau_D^*$ as the pasting of the diagram
\begin{equation}\label{diag:hod-whole}
\begin{tikzcd}[column sep=large]
&\arrow[dl, "(Df)^*"', bend right=20pt] \Ho(\mathscr{C}^{DJ})\arrow[d, "(Dt)^*"] \arrow[dr, "(Dg)^*", bend left=20pt]\\
\Ho(\mathscr{C}^{DI})\arrow[d, "\id"']\twocell[dr] & \arrow[l, "\ev_0"'] \Ho(\mathscr{C}^{D(I\times[1])} \arrow[d, "\simeq"] \arrow[r, "\ev_1"] &\Ho(\mathscr{C}^{DI})\arrow[d, "\id"]\\
\Ho(\mathscr{C}^{DI})\arrow[dd, "\id"'] & \arrow[l, "\ev_{d_1}"] \Ho(\mathscr{C}^{DI\times D[1]})\twocell[ur] \arrow[d,"(DI\times i)^*"'] \arrow[r, "\ev_{d_0}"'] & \Ho(\mathscr{C}^{DI})\arrow[dd, "\id"]\\
& \arrow[dl, "\ev_0"']\Ho(\mathscr{C}^{DI\times[1]})\twocell[ur, yshift=-1ex] \arrow[dr, "\ev_1"]\\
\Ho(\mathscr{C}^{DI})\twocell[rr]\arrow[dr, "\id"'] & & \Ho(\mathscr{C}^{DI})\arrow[dl, "\id"]\\
& \Ho(\mathscr{C}^{DI})
\end{tikzcd}
\end{equation}
obtained by stacking (\ref{diag:two-cells-top}), (\ref{diag:two-cells-mates}), and (\ref{diag:two-cells-basis}) atop each other.
\end{constr}

\begin{prop}\label{prop:2-cells-natural}
The above definition extends the $1$-functor $\HOD({\mathscr C})$ to a prederivator. Moreover, the maps $p^*$ assemble into a strict morphism of prederivators $\HOcof({\mathscr C})\to\HOD({\mathscr C})$, and this morphism is an equivalence.
\begin{proof}
We observe that $p^*$ is a (strict) natural transformation of the underlying $1$-functors by naturality, and a levelwise equivalence by Proposition~\ref{prop:cof-hoder-models}-(\ref{item:p-star}). We will now show that for each natural transformation $\tau\colon f\Rightarrow g$ of functors $f,g\colon I\to J$ between small categories the two pastings
\begin{equation}\label{diag:pastings-d1}
\begin{tikzcd}
\Ho(\mathscr{C}^J)\arrow[r, "p^*"] & \Ho(\mathscr{C}^{DJ}) \arrow[r, "(Df)^*"{name=t}, bend left=20pt] \arrow[r, bend right=20pt, "(Dg)^*"'{name=b}] \twocell[from=t, to=b] &[1em] \Ho(\mathscr{C}^{DI})
\end{tikzcd}
\end{equation}
and
\begin{equation}\label{diag:pastings-d2}
\begin{tikzcd}
\Ho(\mathscr{C}^J) \arrow[r, "f^*"{name=t}, bend left=20pt] \arrow[r, bend right=20pt, "g^*"'{name=b}] \twocell[from=t, to=b] &[1em] \Ho(\mathscr{C}^{I})\arrow[r, "p^*"] & \Ho(\mathscr{C}^{DI})
\end{tikzcd}
\end{equation}
agree. From this all the remaining claims follow: namely, a natural transformation between prescribed functors is determined by what it does on an essentially wide subcategory. Thus we can conclude from the $2$-functoriality of $\HOcof({\mathscr C})$ that the above turns $\HOD({\mathscr C})$ into a strict $2$-functor. With this established, the equality of the pastings (\ref{diag:pastings-d1}) and (\ref{diag:pastings-d2}), together with the opening remark proves $2$-naturality, i.e.~the $p^*$ form a strict morphism of derivators, and this was already seen to be a levelwise equivalence.

To prove that the pastings indeed agree, we note that by naturality of $p$ we have a commutative diagram
\begin{equation}\label{diag:hodcof-top}
\begin{tikzcd}[column sep=.4em, row sep=1.5em]
&[-2.5pt] &[-1pt] & \arrow[lldd, "(Df)^*"' near start, bend right=25pt] \Ho(\mathscr{C}^{DJ})\arrow[dd] \arrow[rrdd, "(Dg)^*" near start, bend left=25pt] \\
& & \arrow[lldd, "f^*"', bend right=30pt, crossing over] \Ho(\mathscr{C}^J)\arrow[ru] \arrow[rrdd, "g^*", bend left=30pt, crossing over]\\
& \Ho(\mathscr{C}^{DI}) & &[-15pt] \arrow[ll]\Ho(\mathscr{C}^{D(I\times[1])})\arrow[shorten <= 1em, rr] & &[-2.5pt] \Ho(\mathscr{C}^{DI}).\\
\Ho(\mathscr{C}^I)\arrow[ur] & & \arrow[ll, "\ev_0"] \Ho(\mathscr{C}^{I\times[1]})\arrow[from=uu, crossing over]\arrow[ur]\arrow[rr, "\ev_1"'] & &[-2pt] \Ho(\mathscr{C}^I)\arrow[ur]
\end{tikzcd}
\end{equation}
Moreover, we have a coherent diagram
\begin{equation}\label{diag:cube-before-mate}
\begin{tikzcd}[column sep=.25em, row sep=1.5em]
& \Ho(\mathscr{C}^{DI})\arrow[from=dd] & & \arrow[ll, "\ev_0"']\Ho(\mathscr{C}^{D(I\times[1])})\arrow[from=dd] \arrow[rr, "\ev_1"] & & \Ho(\mathscr{C}^{DI})\arrow[from=dd]\\
\Ho(\mathscr{C}^I)\arrow[ur] & & \arrow[ll, crossing over]\Ho(\mathscr{C}^{I\times[1]})\arrow[ur]\arrow[rr, crossing over] & & \Ho(\mathscr{C}^I)\arrow[ur]\\
& \Ho(\mathscr{C}^{DI}) & & \arrow[ll, "\ev_{d_1}" near start]\Ho(\mathscr{C}^{DI\times D[1]}) \arrow[rr, "\ev_{d_0}"' near start] & & \Ho(\mathscr{C}^{DI})\\ 
\Ho(\mathscr{C}^I)\arrow[uu]\arrow[ur] & & \arrow[ll, "\ev_0"]\Ho(\mathscr{C}^{I\times[1]})\arrow[uu, crossing over]\arrow[rr, "\ev_1"']\arrow[ur] & & \Ho(\mathscr{C}^I)\arrow[ur]\arrow[uu, crossing over]
\end{tikzcd}
\end{equation}
where the front face is the obvious one, the back face is (\ref{diag:two-cells-before}), and the front-to-back maps are either $p^*$ or $(p\times p)^*$. Note that we omitted the non-trivial $2$-cells on the back face for readability. Indeed, by Construction~\ref{constr:homotopic-eval} it suffices to show that for each $j\colon[0]\to[1]$ the diagram
\begin{equation*}
\begin{tikzcd}[column sep=.25em, row sep=1.5em]
& \Ho(\mathscr{C}^{D(I\times[1])})\arrow[from=dd] \arrow[rr, "D(\id\times j)^*"] && \Ho(\mathscr{C}^{D(I\times[0])})\arrow[from=dd]\arrow[rr]&&\Ho(\mathscr{C}^{DI})\\
\Ho(\mathscr{C}^{I\times[1]}) \arrow[ur]\arrow[rr, crossing over, "\ev_{j(0)}" near end] && \Ho(\mathscr{C}^{I\times[0]})\arrow[ur]\arrow[rr, crossing over,  end anchor={[xshift=5pt]}]&&\hbox to 5pt{\hfill} \Ho(\mathscr{C}^I)\arrow[ur]\\
& \Ho(\mathscr{C}^{DI\times D[1]})\arrow[rr]&&[-10pt]\Ho(\mathscr{C}^{DI\times D[0]})\arrow[rr, "\ev_{\id_{[0]}}"' near start]&& \Ho(\mathscr{C}^{DI})\arrow[uu]\\
\Ho(\mathscr{C}^{I\times[1]})\arrow[uu]\arrow[ur]\arrow[rr, "\ev_{j(0)}"'] &&[-13pt] \Ho(\mathscr{C}^{I\times[0]})\arrow[ur]\arrow[uu, crossing over]\arrow[rr, end anchor={[xshift=5pt]}] &&\hbox to 5pt{\hfill}\Ho(\mathscr{C}^I)\arrow[uu, crossing over, xshift=2.5pt]\arrow[ur]
\end{tikzcd}
\end{equation*}
is coherent. Here the back faces are given by (\ref{diag:h-e-easy}) respectively (\ref{diag:h-e-mate}) and the front-to-back maps are defined as before. In particular, all the faces of the (commutative) cube on the left are filled with the identity transformations, and so it suffices to prove coherence of the right hand cube. This amounts to saying that the natural transformation from (\ref{diag:h-e-mate}) is the identity on diagrams in $\im (p\times p)^*$. For this we consider the commutative diagram
\begin{equation*}
\begin{tikzcd}[column sep=4.5em]
\Ho(\mathscr{C}^{I\times[0]})\arrow[d, "(p\times p)^*"'] & \arrow[l, "\pr_1^*"'] \Ho(\mathscr{C}^I)\arrow[d,"p^*"]\\
\Ho(\mathscr{C}^{DI\times D[0]}) \arrow[d, "{(D\pr_1,D\pr_2)^*}"'] & \arrow[l, "\pr_1^*"'] \Ho(\mathscr{C}^{DI})\arrow[d, "\id"]\\
\Ho(\mathscr{C}^{D(I\times [0])}) & \arrow[l, "(D\pr_1)^*"] \Ho(\mathscr{C}^{DI}).
\end{tikzcd}
\end{equation*}
The top map is an isomorphism and hence we can make it into the right adjoint in an adjoint equivalence where both unit and counit are the identity. It is now trivial to check that with respect to this and the  adjunctions fixed in Construction~\ref{constr:homotopic-eval} the canonical mate of the top square and the canonical mate of the total rectangle are both the identity (because all the relevant counits and units are); on the other hand (\ref{diag:h-e-mate}) is by construction the canonical mate of the bottom rectangle, and hence the compatibility of mates with pasting implies the claim.

Appealing to the compatibility of mates with pasting again we get from (\ref{diag:cube-before-mate}), after inverting a natural isomorphism, the coherent diagram
\begin{equation}\label{diag:cube-after-mate}
\begin{tikzcd}[column sep=.25em, row sep=1.5em]
& \Ho(\mathscr{C}^{DI})\arrow[dd] & & \arrow[ll, "\ev_0"']\Ho(\mathscr{C}^{D(I\times[1])})\arrow[dd] \arrow[rr, "\ev_1"] & & \Ho(\mathscr{C}^{DI})\arrow[dd]\\
\Ho(\mathscr{C}^I)\arrow[ur] & & \arrow[ll, crossing over]\Ho(\mathscr{C}^{I\times[1]})\arrow[ur]\arrow[rr, crossing over] & & \Ho(\mathscr{C}^I)\arrow[ur]\\
& \Ho(\mathscr{C}^{DI}) & & \arrow[ll, "\ev_{d_1}" near start]\Ho(\mathscr{C}^{DI\times D[1]}) \arrow[rr, "\ev_{d_0}"' near start] & & \Ho(\mathscr{C}^{DI})\\ 
\Ho(\mathscr{C}^I)\arrow[from=uu, crossing over]\arrow[ur] & & \arrow[ll, "\ev_0"]\Ho(\mathscr{C}^{I\times[1]})\arrow[from=uu, crossing over]\arrow[rr, "\ev_1"']\arrow[ur] & & \Ho(\mathscr{C}^I)\arrow[ur]\arrow[from=uu, crossing over]
\end{tikzcd}
\end{equation}
where the back face is now (\ref{diag:two-cells-mates}) and the middle face might be filled with a nontrivial isomorphism.

Moreover, the diagram
\begin{equation}\label{diag:cube-i}
\begin{tikzcd}[column sep=.25em, row sep=1.5em]
& \Ho(\mathscr{C}^{DI})\arrow[dd] & & \arrow[ll, "\ev_{d_1}"']\Ho(\mathscr{C}^{DI\times D[1]})\arrow[dd, "(DI\times i)^*" near start] \arrow[rr, "\ev_{d_0}"] & & \Ho(\mathscr{C}^{DI})\arrow[dd]\\
\Ho(\mathscr{C}^I)\arrow[ur] & & \arrow[ll, crossing over]\Ho(\mathscr{C}^{I\times[1]})\arrow[ur]\arrow[rr, crossing over] & & \Ho(\mathscr{C}^I)\arrow[ur]\\
& \Ho(\mathscr{C}^{DI}) & & \arrow[ll, "\ev_0" near start]\Ho(\mathscr{C}^{DI\times [1]}) \arrow[rr, "\ev_1"' near start] & & \Ho(\mathscr{C}^{DI})\\ 
\Ho(\mathscr{C}^I)\arrow[from=uu, crossing over]\arrow[ur] & & \arrow[ll, "\ev_0"]\Ho(\mathscr{C}^{I\times[1]})\arrow[from=uu, crossing over]\arrow[rr, "\ev_1"']\arrow[ur] & & \Ho(\mathscr{C}^I)\arrow[ur]\arrow[from=uu, crossing over]
\end{tikzcd}
\end{equation}
with the back face from (\ref{diag:two-cells-basis}) is also coherent: this is trivial for the left hand cube (which is actually commutative), and for the hand right cube it follows from the explicit description of the natural transformation and the formula $p(1:[0]\to[1])=\id$.

Finally, by $2$-functoriality of $\HOcof({\mathscr C})$ the two pastings
\begin{equation*}
\begin{tikzcd}
\Ho(\mathscr{C}^{I\times[1]})\arrow[r, "(p\times\id)^*"] &[.5em] \Ho(\mathscr{C}^{DI\times[1]}) \arrow[r, "\ev_0"{name=t,xshift=-2pt}, bend left=20pt] \arrow[r, bend right=20pt, "\ev_1"'{name=b}] \twocell[from=t, to=b] & \Ho(\mathscr{C}^{DI})
\end{tikzcd}
\end{equation*}
and
\begin{equation*}
\begin{tikzcd}
\Ho(\mathscr{C}^{I\times[1]}) \arrow[r, "\ev_0"{name=t,above,xshift=-2pt}, bend left=20pt] \arrow[r, bend right=20pt, "\ev_1"{name=b,below}] \twocell[from=t, to=b] &[.5em] \Ho(\mathscr{C}^{I}) \arrow[r, "p^*"] & \Ho(\mathscr{C}^{DI})
\end{tikzcd}
\end{equation*}
agree, i.e.~we have a coherent diagram
\begin{equation}\label{diag:hodcof-bot}
\begin{tikzcd}[column sep=.45em, row sep=1em]
& \Ho(\mathscr{C}^{DI})\arrow[ddrr, bend right=30pt] && \arrow[ll, "\ev_0"'] \Ho(\mathscr{C}^{DI\times[1]}) \arrow[rr, "\ev_1"] && \Ho(\mathscr{C}^{DI})\arrow[ddll, bend left=30pt]\\
\Ho(\mathscr{C}^I)\arrow[ur]\arrow[ddrr, bend right=30pt] && \arrow[ll, "\ev_0", crossing over] \Ho(\mathscr{C}^{I\times[1]})\arrow[ur] \arrow[rr, "\ev_1"'] && \Ho(\mathscr{C}^I)\arrow[ur]\arrow[ddll, bend left=30pt, crossing over]\\
& & & \Ho(\mathscr{C}^{DI})\\
& & \Ho(\mathscr{C}^I)\arrow[ur]
\end{tikzcd}
\end{equation}
where front and back face are filled with the obvious natural transformations going from left to right.

Now we stack the diagrams (\ref{diag:hodcof-top}), (\ref{diag:cube-after-mate}), (\ref{diag:cube-i}), and (\ref{diag:hodcof-bot}) together from top to bottom. This yields a coherent diagram whose front face represents $\tau^*$. Moreover its back face is given by stacking together the diagrams (\ref{diag:two-cells-top}), (\ref{diag:two-cells-mates}), and (\ref{diag:two-cells-basis}), so it agrees with (\ref{diag:hod-whole}) and hence its pasting is by definition $\tau_D^*$. 

The front to back map at the top is given by $p^*\colon \Ho(\mathscr{C}^J)\to\Ho(\mathscr{C}^{DJ})$ and the one at the bottom by $p^*\colon\Ho(\mathscr{C}^I)\to\Ho(\mathscr{C}^{DI})$. Moreover, the remaining outer faces are all filled with identity transformations by construction, so this large diagram precisely witnesses the equality of (\ref{diag:pastings-d1}) and (\ref{diag:pastings-d2}), finishing the proof.
\end{proof}
\end{prop}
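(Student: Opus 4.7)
The plan is to reduce the statement to a single compatibility identity. The proposition has three parts: that the $2$-cells $\tau_D^*$ extend the $1$-functor $\HOD({\mathscr C})$ to a strict $2$-functor; that the component maps $p^*$ are strictly $2$-natural; and that the resulting morphism of prederivators is an equivalence. The third part is essentially automatic once the first two are in place, since each $p^*$ is a levelwise equivalence by Proposition~\ref{prop:cof-hoder-models}-(\ref{item:p-star}), so Lemma~\ref{lemma:der-quasi-inverse} produces a pseudonatural quasi-inverse, giving an equivalence in the $2$-category of prederivators.

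My central reduction is that the first two parts both follow from a single compatibility identity: for every natural transformation $\tau\colon f\Rightarrow g$ between functors $f,g\colon I\to J$ of small categories, the left whiskering $\tau_D^*\star p^*\colon (Df)^*p^*\Rightarrow(Dg)^*p^*$ coincides with the right whiskering $p^*\star\tau^*\colon p^*f^*\Rightarrow p^*g^*$ (using naturality of $p$ to identify source and target). Because $p^*\colon\Ho({\mathscr C}^J)\to\Ho({\mathscr C}^{DJ})$ is essentially surjective, any $2$-cell between prescribed functors out of $\Ho({\mathscr C}^{DJ})$ is determined by its whiskering along $p^*$. Hence this compatibility simultaneously proves strict $2$-naturality of the $p^*$ and forces the horizontal composition, vertical composition, and identity laws in $\HOD({\mathscr C})$ to be inherited from the known strict $2$-functoriality of $\HOcof({\mathscr C})$.

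To establish the compatibility, I would unfold $\tau_D^*$ according to its definition~(\ref{diag:hod-whole}) as a three-layer pasting and compare it layer-by-layer with the $\HOcof({\mathscr C})$-side. Each of the three layers---the functoriality square~(\ref{diag:two-cells-top}), the mate pair~(\ref{diag:two-cells-mates}), and the base triangles~(\ref{diag:two-cells-basis})---fits into a cube whose front face is the analogous piece for $\HOcof({\mathscr C})$ and whose front-to-back edges are the relevant components of $p^*$. The top and bottom cubes are relatively straightforward: the first reduces to naturality of $p$, and the third uses the formula $p(1\colon[0]\to[1])=\id$ together with the defining property of the inclusion $i$ from Corollary~\ref{cor:incl-d}.

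The hard part will be the middle cube. I would need to show that the $2$-cell~(\ref{diag:h-e-mate}) becomes the identity when restricted along $(p\times p)^*$. The argument I have in mind exhibits the relevant projection as part of an adjoint equivalence with trivial unit and counit; compatibility of canonical mates with pasting will then identify the mate of~(\ref{diag:h-e-mate}) along $(p\times p)^*$ with the (trivial) mate of the corresponding square for $\Ho({\mathscr C}^I)$. Once this vanishing is in place, stacking the three cubes yields the desired agreement of the two whiskerings and thus completes the proof.
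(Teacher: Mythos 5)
Your proposal is correct and follows essentially the same route as the paper: the same reduction of both $2$-functoriality of $\HOD(\mathscr C)$ and $2$-naturality of $p^*$ to the single whiskering identity (justified by essential surjectivity of $p^*$), the same three-cube stacking matching the layers of (\ref{diag:hod-whole}), and the same treatment of the middle cube via trivial-unit-and-counit adjoint equivalences and compatibility of mates with pasting. Nothing further is needed.
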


\begin{rk}
We remark that a purely formal calculation shows that there is a unique way to extend the strict $1$-functor $\HOD(\mathscr C)$ to a strict $2$-functor in such a way that $p^*$ gives rise to a $2$-natural equivalence $\HOcof(\mathscr C)\to\HOD(\mathscr C)$; similar remarks apply to the equivalences from Proposition~\ref{prop:hond-vs-hod} respectively Proposition~\ref{prop:final-equivalence}.

However, using this abstract statement instead of the above comes at the cost of the explicit description of the action of $\HOD(\mathscr C)$ on $2$-cells. Since we ultimately want to establish a zig-zag of equivalences between $\HOcof(\mathscr C)$ and $\HOinf(\nerve_f(\mathscr C))$, which both already come equipped with $2$-functor structures, we will at some point have to check $2$-naturality of at least one map directly. We think that the above indirectness would make this proof much more complicated and lengthy than any of the individual verifications. Accordingly, we decided to give rather explicit constructions of the desired $2$-cells, which might also be interesting in their own right.
\end{rk}

Obviously $\HOD$ becomes a strict $1$-functor $\cat{COFCAT}\to\cat{PREDER}$ via pushforward. We remark that the above constitutes a strictly natural transformation $\HOcof\Rightarrow\HOD$.

\subsection{Homotopy prederivators of associated quasi-categories}
We will now introduce yet another auxiliary prederivator that ``interpolates'' between $\HOD(\mathscr C)$ and $\HOinf(\nerve_f(\mathscr C))$. As before we construct it first as a $1$-functor:

\begin{defi}
Let $\mathscr C$ be a cofibration category. We define $\HOND(\mathscr C)$ to be the $1$-functor $\cat{Cat}^\op\to\cat{CAT}$ given by $I\mapsto\h\nerve_f({\mathscr C}^{DI}_{\text{R}})$ together with the obvious restriction maps.
\end{defi}

To compare this to $\HOD$ we will use:

\begin{constr}\label{constr:theta}
Let $\mathscr C$ be a cofibration category. Following Szumi\l{}o, we construct a map $\theta\colon\h\nerve_f({\mathscr C})\to\Ho({\mathscr C})$ as follows: an object $X\colon D[0]\to{\mathscr C}$ is sent to $X(\id_{[0]})$ and a morphism represented by an edge $F\colon D[1]\to\mathscr C$ is sent to the zig-zag 
\begin{equation*}
\begin{tikzcd}
F(0\colon [0]\to[1]) \arrow[r, "0_*"] & F(\id\colon[1]\to[1]) & \arrow[l, "1_*"', "\sim"] F(1\colon[0]\to[1]).
\end{tikzcd}
\end{equation*}
\end{constr}

\begin{lemma}\label{lemma:theta-natural}
The above map is well-defined and an equivalence. It commutes strictly with push-forward along exact functors.
\begin{proof}
The naturality claim is trivial and the rest of the statement is \cite[proof of Lemma 4.10]{szumilo}.
\end{proof}
\end{lemma}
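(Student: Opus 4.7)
The plan is to verify the three claims of the lemma in sequence: well-definedness (including descent to the quasi-category's homotopy category), strict naturality in exact functors, and the equivalence claim.

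For well-definedness, the crucial observation is that the morphism $(1\colon[0]\to[1])\to(\id\colon[1]\to[1])$ in $D[1]$---given by the inclusion $\{1\}\hookrightarrow[1]$---maps under $p\colon D[1]\to[1]$ to $\id_1$ and is therefore a weak equivalence in $D[1]$. Since any edge $F\colon D[1]\to\mathscr{C}$ of $\nerve_f(\mathscr{C})$ is homotopical, $1_*$ is a weak equivalence in $\mathscr{C}$ and becomes invertible in $\Ho(\mathscr{C})$; so the zig-zag does represent a morphism there. Descent to $\h\nerve_f(\mathscr{C})$ then requires compatibility with composition (read off from a filling 2-simplex $D[2]\to\mathscr{C}$ by chasing the resulting diagram, whose relevant non-structural arrows are all weak equivalences) and invariance under the standard edge homotopies used to compute the homotopy category of a quasi-category. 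Strict naturality in exact functors is automatic, since push-forward commutes on the nose with evaluation at the relevant objects of $D[n]$ and with the morphisms $0_*$, $1_*$ appearing in the construction.

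For the equivalence claim, essential surjectivity is immediate, as any $X\in\mathscr{C}$ is the image under $\theta$ of the constant Reedy cofibrant homotopical diagram $D[0]\to\mathscr{C}$ at $X$. For fullness, given a morphism in $\Ho(\mathscr{C})$, one first represents it in a form compatible with the zig-zag shape built into $\theta$ (using the cofibration category structure of $\mathscr{C}$) and then invokes relative Reedy cofibrant replacement (Lemma~\ref{lemma:relative-cofibrant-replacement}) along the sieve $D(\partial\Delta^1)\hookrightarrow D[1]$ to extend this incoherent boundary data to a Reedy cofibrant homotopical diagram $F\colon D[1]\to\mathscr{C}$ realizing the prescribed morphism. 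For faithfulness, the same method applied to the sieve $D(\partial\Delta^2)\hookrightarrow D[2]$ produces 2-simplices witnessing the required edge homotopies between edges that have the same image under $\theta$.

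The main obstacle lies in the fullness and faithfulness steps, and more precisely in arranging the boundary data so that Reedy cofibrant replacement can be applied while simultaneously controlling both homotopicality of the extension and its image under $\theta$. This is the technical heart of Szumi\l{}o's proof of \cite[Lemma 4.10]{szumilo}, where the flexibility of the ``thick'' barycentric subdivision (as opposed to the ordinary nerve) turns out to be essential, as it is exactly what allows one to model arbitrary zig-zag fractions rather than only directed morphisms.
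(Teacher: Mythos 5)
Your overall route coincides with the paper's: the paper simply delegates well-definedness and the equivalence claim to Szumi\l{}o's proof of Lemma 4.10 and declares naturality trivial, and your outline is essentially a summary of that argument --- you correctly identify that $1_*$ lies over $\id_1$ under $p$ and is therefore inverted in $\Ho(\mathscr{C})$, that descent to $\h\nerve_f(\mathscr{C})$ is checked on $2$-simplices, that naturality is a triviality, and that fullness and faithfulness come from relative Reedy cofibrant replacement along the sieves $D(\partial\Delta^1)\hookrightarrow D[1]$ and $D(\partial\Delta^2)\hookrightarrow D[2]$.

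There is, however, one genuine error in the part you do spell out, namely essential surjectivity. The category $D[0]$ is not the terminal category: its objects are all the maps $[n]\to[0]$, so $D[0]$ is the infinite direct category $\Delta^\sharp$ of finite nonempty ordinals and injections, and a vertex of $\nerve_f(\mathscr{C})$ is a \emph{frame} on an object rather than an object. The constant diagram at $X$ is homotopical but essentially never Reedy cofibrant: already at the object $[1]\to[0]$ its latching category is the discrete category on the two injections $[0]\to[1]$, so the latching object is $X\amalg X$ and the latching map is the fold map $X\amalg X\to X$, which is not a cofibration in general. Hence ``$X$ is the image of the constant diagram'' does not exhibit a vertex of $\nerve_f(\mathscr{C})$. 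Essential surjectivity (indeed surjectivity on objects) still holds, but it requires the same machinery you invoke for fullness: apply Lemma~\ref{lemma:relative-cofibrant-replacement} to the constant diagram along the sieve $\{\id_{[0]}\}\hookrightarrow D[0]$, keeping the prescribed value $X$ at the bottom object (which is Reedy cofibrant there because all objects of a cofibration category are cofibrant); this produces a Reedy cofibrant homotopical frame whose $\theta$-image is exactly $X$. With that correction the rest of your sketch is consistent with the cited proof.
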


We now want to extend $\HOND$ over $2$-cells. For this we will need:

\begin{constr}\label{constr:e}
We define a functor $e\colon \h\nerve_f({\mathscr C}_{\text{R}}^{DI\times D[0]})\to\h\nerve_f({\mathscr C}_{\text{R}}^{DI})$ as follows: an object corresponding to $X\colon D[0]\times DI\times D[0]\to\mathscr C$ (where the first factor comes from the definition of $\nerve_f$) is sent to the object corresponding to $\widetilde X\colon D[0]\times DI\to\mathscr C$ with $\widetilde X([m]\to [0], f\colon [n]\to I) = X([m]\to [0], f, [m]\to [0])$, and the class of a $1$-simplex corresponding to $F\colon D[1]\times DI\times D[0]\to\mathscr C$ is sent to the class of the edge corresponding to $\widetilde F\colon D[1]\times DI\to\mathscr C$ with $\widetilde F(f\colon [m]\to[1], g\colon [n]\to I)$ = $F(f, g, [m]\to[0])$. 

We emphasize that $e$ is \emph{not} induced from any (exact) functor $\mathscr{C}^{DI\times D[0]}_{\text{R}}\to\mathscr{C}_{\text{R}}^{DI}$, but rather it ``mixes in'' the $D[0]$- resp.~$D[1]$-factor coming from the definition of $\nerve_f$.
\end{constr}

\begin{lemma}\label{lemma:commutative-face}
The above map $e$ is a well-defined functor. Moreover, the diagram
\begin{equation*}
\begin{tikzcd}[column sep=large]
\h\nerve_f(\mathscr{C}_{\textup{R}}^{DI\times D[0]}) \arrow[d, "\incl\circ\theta"', "\simeq"] \arrow[r, "e"] & \h\nerve_f(\mathscr{C}_{\textup{R}}^{DI})\arrow[d, "\incl\circ\theta", "\simeq"']\\
\Ho(\mathscr{C}^{DI\times D[0]}) \arrow[r, "\ev_{[0]\to[0]}"']& \Ho(\mathscr{C}^{DI})
\end{tikzcd}
\end{equation*}
commutes.
\begin{proof}
We first observe that the diagrams $\widetilde X$ and $\widetilde F$ are indeed Reedy cofibrant and homotopical: we show this for the second one, the proof of the first one is analogous. For this we note that $\widetilde F$ is the image of $F$ under the restriction along $D[1]\times DI\cong D\big([1]\times[0]\big)\times DI\to D[1]\times D[0]\times DI\cong D[1]\times DI\times D[0]$ and restriction along each of these maps preserves Reedy cofibrancy and homotopicalness (cf.~Proposition~\ref{prop:pr-equiv}).

Next, we will show that for each homotopical $F\colon D[1]\times DI\times D[0]\to\mathscr C$ the zig-zags
\begin{equation}\label{eq:upper-zig-zag}
\begin{aligned}
F(0\colon[0]\to[1],\blank,[0]\to[0])&\to
F(\id\colon[1]\to[1],\blank,[1]\to[0])\cr&\gets
F(1\colon[0]\to[1],\blank,[0]\to[0])
\end{aligned}
\end{equation}
and
\begin{equation}\label{eq:lower-zig-zag}
\begin{aligned}
F(0\colon[0]\to[1],\blank,[0]\to[0])&\to
F(\id\colon[1]\to[1],\blank,[0]\to[0])\cr&\gets
F(1\colon[0]\to[1],\blank,[0]\to[0])
\end{aligned}
\end{equation}
define the same morphism in $\Ho({\mathscr C}^{DI})$. Indeed, both give rise to natural transformations between the same two functors $\Ho({\mathscr C}^{D[1]\times DI\times D[0]})\to\Ho({\mathscr C}^{DI})$ and since $(p\times p\times p)^*\colon\Ho({\mathscr C}^{[1]\times I\times [0]})\to\Ho({\mathscr C}^{D[1]\times DI\times D[0]})$ is an equivalence by the exponential law, it suffices to prove this for diagrams in the image of $(p\times p\times p)^*$, which is trivial.

From this all of the claims follow: to see that the class $e(F)$ is independent of the choice of representative, it suffices to prove this for the image in $\Ho({\mathscr C}^{DI})$, because the right hand vertical map is an equivalence. But this is precisely the zig-zag (\ref{eq:upper-zig-zag}), which by the above agrees with (\ref{eq:lower-zig-zag}), which in turn is precisely the image of $[F]$ under the lower left composition, proving the claim. The same argument shows functoriality and with this established the above precisely shows commutativity.
\end{proof}
\end{lemma}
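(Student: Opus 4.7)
The plan is to verify the lemma in three stages: first that the auxiliary diagrams $\widetilde{X}$ and $\widetilde{F}$ defined in Construction~\ref{constr:e} remain Reedy cofibrant and homotopical (so that $e$ at least makes sense on the nose before passing to homotopy classes), then that $e$ descends to equivalence classes and is functorial, and finally that the square commutes.

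For the first stage, I would identify $\widetilde{X}$ (respectively $\widetilde{F}$) as the restriction of $X$ (respectively $F$) along a composite of two kinds of maps: an isomorphism $D[m]\times DI \cong D([m]\times [0])\times DI$ coming from $[m]\cong [m]\times [0]$, and a map $D([m]\times [0])\times DI \to D[m]\times D[0]\times DI$ of the type $(D\pr_1,D\pr_2)\times\id$. Restriction along the first is trivially homotopical and preserves Reedy cofibrations, and the second does so by Proposition~\ref{prop:pr-equiv}. Combined with a reindexing of the $D[m]$ factor (the ``mixed-in'' part), this shows $\widetilde{X},\widetilde{F}$ belong to the relevant subcategories of diagrams.

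For the second and third stages together, the key observation is that both well-definedness on equivalence classes and commutativity of the square reduce to a single comparison: for a fixed homotopical $F\colon D[1]\times DI\times D[0]\to\mathscr{C}$, I need the two zig-zags
\[
F(0,\blank,[0]\!\to\![0])\to F(\id_{[1]},\blank,[1]\!\to\![0])\gets F(1,\blank,[0]\!\to\![0])
\]
and
\[
F(0,\blank,[0]\!\to\![0])\to F(\id_{[1]},\blank,[0]\!\to\![0])\gets F(1,\blank,[0]\!\to\![0])
\]
to define the same morphism in $\Ho(\mathscr{C}^{DI})$. Once this is known, the upper-left composite in the square delivers the first zig-zag (since $e$ forces the third coordinate to track the first) and the lower-right composite delivers the second (since $\theta$ evaluates at $\id_{[1]}$ but $\ev_{[0]\to[0]}$ keeps the third coordinate constant); well-definedness of $e$ on 1-simplices is then a special case and functoriality follows from the same kind of argument applied to composable pairs.

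The comparison itself is the only real content and is handled by reduction to ``strict'' diagrams: both zig-zags are natural transformations between the same pair of functors $\Ho(\mathscr{C}^{D[1]\times DI\times D[0]})\to\Ho(\mathscr{C}^{DI})$, and by the exponential law together with Proposition~\ref{prop:cof-hoder-models}-(\ref{item:p-star}) the restriction $(p\times p\times p)^*\colon \Ho(\mathscr{C}^{[1]\times I\times[0]})\to\Ho(\mathscr{C}^{D[1]\times DI\times D[0]})$ is an equivalence, so it suffices to check agreement on diagrams in its essential image. But for such diagrams both maps in each zig-zag are induced by identity morphisms (since $p$ sends $\id_{[1]}\colon[1]\to[1]$, $1\colon[0]\to[1]$ all to $1$, and both $[0]\to[0]$ and $[1]\to[0]$ to $0$), so both zig-zags collapse to the identity, and the equality is immediate. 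The main obstacle is purely notational bookkeeping to keep the three distinct simplicial ``directions''—the $\nerve_f$-direction, the $DI$-direction, and the ``mixed-in'' $D[0]$-direction—straight throughout the argument.
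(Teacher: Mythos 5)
Your proposal follows the paper's proof essentially verbatim: the same identification of $\widetilde X,\widetilde F$ as restrictions along maps of the form $(D\pr_1,D\pr_2)$ to get Reedy cofibrancy and homotopicalness, the same reduction of well-definedness, functoriality, and commutativity to the comparison of the two zig-zags, and the same reduction of that comparison to diagrams in the image of $(p\times p\times p)^*$. One small correction to your final check: the forward maps of both zig-zags are induced by $(0\colon[0]\to[1])\to(\id\colon[1]\to[1])$, which $p$ sends to $0\to 1$ rather than to an identity, so the zig-zags do not ``collapse to the identity''; the correct observation is that the two zig-zags differ only in the $D[0]$-coordinate, which $p$ collapses, so on $\im(p\times p\times p)^*$ their forward maps are induced by the same morphism of $[1]\times I\times[0]$ while their backward maps both become identities---which is all that is needed.
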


\begin{constr}\label{constr:hond-trafo}
We consider the diagram
\begin{equation}\label{diag:transfer-iso}
\begin{tikzcd}[column sep=0pt]
& \Ho(\mathscr{C}^{DI\times D[0]}) \arrow[rr, "\ev_{[0]\to[0]}"]\arrow[dd] & & \Ho(\mathscr{C}^{DI}) \arrow[dd]\\
\h\nerve_f(\mathscr{C}_{\textup{R}}^{DI\times D[0]}) \arrow[ur]\arrow[rr, "e" near end, crossing over] & & \h\nerve_f(\mathscr{C}_{\textup{R}}^{DI}) \arrow[ur]\\
& \Ho(\mathscr{C}^{D(I\times[0])})\arrow[rr] & & \Ho(\mathscr{C}^{DI})\\
\h\nerve_f(\mathscr{C}_{\textup{R}}^{D(I\times[0])})\arrow[from=uu]\arrow[rr]\arrow[ru] & & \h\nerve_f(\mathscr{C}_{\textup{R}}^{DI})\arrow[from=uu, crossing over]\arrow[ur]
\end{tikzcd}
\end{equation}
where the front-to-back maps are induced from $\theta$ and the obvious inclusion. All of the faces containing them commute strictly (the only non-trivial case is accounted for by Lemma~\ref{lemma:commutative-face}) and moreover all of the front-to-back maps are equivalences. Hence there is a unique natural transformation filling the front face such that if we fill the back face with the natural isomorphism from Construction~\ref{constr:homotopic-eval} the resulting diagram is coherent, and this transformation is an isomorphism.

For a natural transformation $\tau\colon f\Rightarrow g$ of functors $f,g\colon I\to J$ between small categories (with corresponding functor $t\colon I\times[1]\to J$) we now define $\tau^D_{\text{N}}$ as the pasting
\begin{equation*}
\begin{tikzcd}[column sep=small, cramped]
& & \arrow[dll, "(Df)^*"', bend right=10pt]\h\nerve_f(\mathscr{C}_{\textup{R}}^{DJ})\arrow[d, "(Dt)^*"] \arrow[drr, "(Dg)^*", bend left = 10pt]\\[5pt]
\h\nerve_f(\mathscr{C}_{\textup{R}}^{DI})\twocell[dr] \arrow[d,"\id"']& \arrow[l] \h\nerve_f(\mathscr{C}_{\textup{R}}^{D(I\times[0])}) \twocell[dr] \arrow[d, "\simeq"'] & \arrow[l] \h\nerve_f(\mathscr{C}_{\textup{R}}^{D(I\times[1])}) \arrow[d, "\simeq"] \arrow[r] & \h\nerve_f(\mathscr{C}_{\textup{R}}^{D(I\times[0])})\twocell[from=dl] \arrow[d, "\simeq"] \arrow[r] & \h\nerve_f(\mathscr{C}_{\textup{R}}^{DI})\twocell[from=dl]\arrow[d, "\id"]\\
\h\nerve_f(\mathscr{C}_{\textup{R}}^{DI})\arrow[dd, "\id"'] & \arrow[l,"e"] \h\nerve_f(\mathscr{C}_{\textup{R}}^{DI\times D[0]}) & \arrow[l] \h\nerve_f(\mathscr{C}_{\textup{R}}^{DI\times D[1]})\arrow[d, "\Phi"] \arrow[r] & \h\nerve_f(\mathscr{C}_{\textup{R}}^{DI\times D[0]}) \arrow[r, "e"'] &\h\nerve_f(\mathscr{C}_{\textup{R}}^{DI})\arrow[dd, "\id"]\\
& & \arrow[lld, "\ev_0"']\h\big(\nerve_f(\mathscr{C}_{\textup{R}}^{DI})^{\Delta^1}\big)\arrow[rrd, "\ev_1"]\\
\h\nerve_f(\mathscr{C}_{\textup{R}}^{DI}) \twocell[rrrr]\arrow[rrd, "\id"']& & & & \h\nerve_f(\mathscr{C}_{\textup{R}}^{DI})\arrow[lld, "\id"]\\
& & \h\nerve_f(\mathscr{C}_{\textup{R}}^{DI})
\end{tikzcd}
\end{equation*}
where the inner natural transformations in the second row from the top are the canonical mates of the respective identity transformations, viewing the vertical maps as right adjoints. Moreover, the outer ones are obtained in the same way from the isomorphism in (\ref{diag:transfer-iso}) respectively its inverse; here we have again chosen trivial units and counits for the outermost arrows. The map $\Phi$ comes from Construction~\ref{constr:frame-diag}. Finally, the natural transformation in the lower diamond is the obvious one. We remark that the trapezoids involving $e$ as one of the top edges do indeed commute---this is the reason for the rather strange definition of $e$.
\end{constr}

We can now compare this to the previous intermediate prederivator:

\begin{prop}\label{prop:hond-vs-hod}
The above construction makes $\HOND(\mathscr C)$ into a prederivator. Moreover, the maps $\incl\circ\theta$ assemble into an equivalence of prederivators $\HOND(\mathscr C)\to\HOD({\mathscr C})$.
\begin{proof}
By Lemma~\ref{lemma:theta-natural} and Proposition~\ref{prop:cof-hoder-models}-(\ref{item:reedy-repl}) this is a levelwise equivalence and strictly compatible with restrictions. Hence by arguments analogous to the proof of Proposition~\ref{prop:2-cells-natural} it suffices to check compatibility with $2$-cells. 

By construction and the same arguments as before it is enough to prove that the pastings
\begin{equation*}
\begin{tikzcd}
\h\nerve_f(\mathscr{C}_{\textup{R}}^{DI\times D[1]})\arrow[d]\\
\Ho(\mathscr{C}_{\textup{R}}^{DI\times D[1]})\arrow[d, "(DI\times i)^*"'] \arrow[r] & \Ho(\mathscr{C}^{DI}),\arrow[ddl, bend left=30pt]\\
\Ho(\mathscr{C}^{DI\times[1]})\arrow[d, "\ev_0"'{name=s}, bend right=40pt]\arrow[d, "\ev_1"{name=t}, bend left=40pt]\twocell[from=s,to=t]\twocell[ur,yshift=-4ex]\\[3ex]
\Ho(\mathscr{C}^{DI})
\end{tikzcd}
\end{equation*}
where the lower portion comes from (\ref{diag:hod-whole}), and
\begin{equation*}
\begin{tikzcd}
\h\nerve_f(\mathscr{C}_{\textup{R}}^{DI\times D[1]})\arrow[r] & \h\big(\nerve_f(\mathscr{C}_{\textup{R}}^{DI})^{\Delta^1}\big)\arrow[r, "\ev_0"{above, name=s}, bend left=20pt]\arrow[r, "\ev_1"{below, name=t, xshift=1pt}, bend right=20pt] \twocell[from=s, to=t] & \h\nerve_f(\mathscr{C}_{\textup{R}}^{DI})\arrow[r] & \Ho(\mathscr{C}^{DI})
\end{tikzcd}
\end{equation*}
agree. Now an explicit computation shows that on an object corresponding to $F\colon D[0]\times DI\times D[1]\to\mathscr C$ the first one is given by the zig-zag
\begin{equation*}
\begin{aligned}
F([0]\to[0],\blank,0\colon [0]\to[1])&\to F([0]\to[0],\blank, \id\colon[1]\to[1])\\
&\gets F([0]\to[0],\blank,1\colon[0]\to[1])
\end{aligned}
\end{equation*}
whereas the second one is the zig-zag
\begin{equation*}
\begin{aligned}
F([0]\to[0],\blank,0\colon [0]\to[1])&\to F([1]\to[0],\blank, \id\colon[1]\to[1])\\
&\gets F([0]\to[0],\blank,1\colon[0]\to[1])
\end{aligned}
\end{equation*}
and the claim follows from the proof of Lemma~\ref{lemma:commutative-face}.
\end{proof}
\end{prop}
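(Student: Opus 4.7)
The plan is to imitate the strategy of Proposition~\ref{prop:2-cells-natural} as closely as possible. By Lemma~\ref{lemma:theta-natural} together with Proposition~\ref{prop:cof-hoder-models}-(\ref{item:reedy-repl}), each component $\incl\circ\theta$ is an equivalence of categories, and strict naturality with respect to restriction along functors of small categories is immediate from the naturality assertion of Lemma~\ref{lemma:theta-natural}. Hence the underlying $1$-functors of $\HOND(\mathscr C)$ and $\HOD(\mathscr C)$ are already strictly $1$-naturally equivalent via these maps.

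I would next observe that both the strict $2$-functoriality of $\HOND(\mathscr C)$ and the $2$-naturality of $\incl\circ\theta$ reduce to a single pasting equality, by the bootstrap used in Proposition~\ref{prop:2-cells-natural}. Since $\HOD(\mathscr C)$ is already a strict $2$-functor and since a natural transformation between fixed functors is determined by its action on an essentially wide subcategory, it suffices to show that for every natural transformation $\tau\colon f\Rightarrow g$ between functors $f,g\colon I\to J$ of small categories, the whiskering of $\tau^D_{\textup{N}}$ with the target copy of $\incl\circ\theta$ agrees with the whiskering of $\tau_D^*$ with the source copy. Once this is known, the composition and identity constraints for $\tau^D_{\textup{N}}$ can be transported from those of $\tau_D^*$, and the $2$-naturality of $\incl\circ\theta$ is precisely the pasting equality.

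The hard part will be this pasting equality. Unpacking both constructions on an object represented by a Reedy cofibrant homotopical diagram $F\colon D[0]\times DI\times D[1]\to\mathscr C$ (where the $D[0]$-factor comes from the definition of $\nerve_f$), I expect both sides to produce zig-zags in $\Ho(\mathscr C^{DI})$ sharing the endpoints $F([0]\to[0],\blank,0)$ and $F([0]\to[0],\blank,1)$, but possibly differing in their middle vertex according to whether the evaluation $[0]\to[0]$ or $[1]\to[0]$ is used in the auxiliary $D[0]$-coordinate produced by the definition of $\nerve_f$. To see that the two zig-zags represent the same morphism, I would recycle the trick from the proof of Lemma~\ref{lemma:commutative-face}: both define natural transformations between the same pair of functors $\Ho(\mathscr{C}^{D[0]\times DI\times D[1]})\to\Ho(\mathscr{C}^{DI})$, so it is enough to check agreement after precomposition with the equivalence $(p\times p\times p)^*$ arising from the exponential law, where both zig-zags visibly collapse to the same morphism.

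Once the pasting equality is in place, $\HOND(\mathscr C)$ is a strict $2$-functor, $\incl\circ\theta$ is a genuine pseudonatural transformation, and each component is an equivalence of categories. Invoking Lemma~\ref{lemma:der-quasi-inverse} (or rather the remark following it) upgrades this to an equivalence of prederivators $\HOND(\mathscr C)\to\HOD(\mathscr C)$, as desired.
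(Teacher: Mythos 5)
Your proposal is correct and follows essentially the same route as the paper: the same reduction to a single pasting equality via the bootstrap of Proposition~\ref{prop:2-cells-natural}, the same identification of the two zig-zags differing only in the auxiliary $D[0]$-coordinate of the middle vertex ($[0]\to[0]$ versus $[1]\to[0]$), and the same resolution by the $(p\times p\times p)^*$ reduction from the proof of Lemma~\ref{lemma:commutative-face}.
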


Again we remark that $\HOND$ becomes a strict functor $\cat{COFCAT}\to\cat{PREDER}$ via pushforward and the above map constitues a strictly natural transformation $\HOND\Rightarrow\HOD$.

\subsection{Sidestepping Reedy cofibrancy}
The maps $\Phi$ from Construction~\ref{constr:frame-diag} provide a natural candidate for an equivalence $\HOND(\mathscr C)\to \HOinf(\nerve_f(\mathscr C))$ and it is easy to see that this is a strictly natural transformation of underlying $1$-functors. However, proving $2$-naturality becomes surprisingly hard because of the indirectness in the above definition of the action of $\HOND(\mathscr C)$ on natural transformations.

To solve this issue it will be useful to introduce a variant of $\nerve_f$ (or rather its homotopy category) that is based on diagrams that are merely homotopical without any Reedy cofibrancy assumptions.

\begin{prop}
Let $\mathscr C$ be a cofibration category. The functor $\cat{SSET}\to\cat{SET}$
\begin{equation*}
K\mapsto \{\hbox{homotopical diagrams $DK\to\mathscr C$}\}
\end{equation*}
(together with the obvious restriction maps) is representable by a large simplicial set $\widetilde{\nerve_h}({\mathscr C})$ given explicitely by
\begin{equation*}
\widetilde{\nerve_h}({\mathscr C})_n = \{\hbox{homotopical diagrams $D[n]\to\mathscr C$}\}.
\end{equation*}
\begin{proof}
This is immediate from Lemma~\ref{lemma:representability}.
\end{proof}
\end{prop}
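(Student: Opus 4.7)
The plan is to adapt Szumi\l{}o's representability argument for $\nerve_f$ verbatim, replacing ``Reedy cofibrant homotopical diagram'' by ``homotopical diagram'' throughout. Since $\cat{SSET}$ is a (large) presheaf category on $\Delta$, a functor $F\colon\cat{SSET}\to\cat{SET}$ is representable by a large simplicial set $X$ (necessarily given by $X_n = F(\Delta^n)$) if and only if $F$ sends (not necessarily small) colimits of simplicial sets to limits of sets.

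Thus the key step is to verify continuity of the functor
\begin{equation*}
K\mapsto\{\text{homotopical diagrams $DK\to\mathscr C$}\}.
\end{equation*}
This is immediate from Lemma~\ref{lemma:representability}: part (1) tells us that $D\colon\cat{SSET}\to\cat{CAT}$ preserves $\textbf{V}$-small colimits, so for any diagram $K_\bullet\colon I\to\cat{SSET}$ the canonical map
\begin{equation*}
\Hom_{\cat{CAT}}\big(D(\colim\nolimits_I K_\bullet), \mathscr C\big)\to\lim\nolimits_I\Hom_{\cat{CAT}}\big(D(K_\bullet),\mathscr C\big)
\end{equation*}
is a bijection; and part (2) then says that this bijection restricts to one between the subsets of homotopical diagrams on either side. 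Applying this and specialising to $K=\Delta^n$ via the Yoneda lemma yields both the representability statement and the explicit formula for the $n$-simplices.

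There is essentially no obstacle here, since both components of Lemma~\ref{lemma:representability} have already been invoked (the second with both Reedy cofibrancy and homotopicalness) in the construction of $\nerve_f(\mathscr C)$; we are simply dropping one of the two conditions, and the lemma is stated so as to make the homotopical-only statement directly available. The only thing worth checking is that the functoriality in $K$ (restriction along $Df$ for $f\colon K\to L$) preserves homotopicalness, which is immediate from the definition of the weak equivalences of $DK$ and the naturality of $p$.
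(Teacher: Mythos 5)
Your proposal is correct and follows the same route as the paper: the paper's proof is literally ``This is immediate from Lemma~\ref{lemma:representability},'' relying on the fact that Lemma~\ref{lemma:representability}-(2) is stated so as to cover the homotopical-only case, exactly as you observe. Your elaboration of the presheaf-category representability criterion matches the argument the paper already gave for $\nerve_f$ itself.
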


Note that we have a natural inclusion $\nerve_f({\mathscr C})\hookrightarrow\widetilde{\nerve_h}({\mathscr C})$.

\begin{defi}
We define the category $\h\nerve_h({\mathscr C})$ to be ``the'' strict localization of $\h\widetilde{\nerve_h}({\mathscr C})$ with respect to the homotopical diagrams $D\widehat{[1]}\to\mathscr C$.
\end{defi}

\begin{warn}
We use $\h\nerve_h$ merely as a primitive symbol here; the same remark applies to Definition~\ref{defi:nf-nh-general} below.
\end{warn}

\begin{rk}
By the usual presentation of the homotopy category of a simplicial set and the universal property of localization a functor $F\colon\h\nerve_h(\mathscr{C})\to\mathscr{D}$ corresponds to assigning to each vertex $x\colon D[0]\to\mathscr{C}$ an object $Fx$ and to each edge $f$ from $x$ to $y$ a morphism $Ff\colon Fx\to Fy$ such that the following conditions are satisfied:
\begin{enumerate}
\item $F$ is compatible with $2$-cells in the obvious way.
\item $F$ sends degenerate edges to identity morphisms.
\item $F$ sends any edge of the form $D\widehat{[1]}\to\mathscr{C}$ to an isomorphism.
\end{enumerate}
We remark that the second condition is vacuous in the presence of the other two: namely, let $f$ be the image of a degenerate edge. Then $f^2=f$ by (1) and on the other hand $f$ is an isomorphism by (3). We conclude that $f$ is an identity arrow as desired.
\end{rk}

\begin{lemma}\label{lemma:nf-nh-special}
The composition $i\colon\h\nerve_f({\mathscr C})\hookrightarrow\h\widetilde{\nerve_h}({\mathscr C})\to\h\nerve_h({\mathscr C})$ is an equivalence of categories.
\begin{proof}
We will construct a quasi-inverse $q$. For this let $X\colon D[0]\to\mathscr C$ be homotopical. By cofibrant replacement we can choose a Reedy cofibrant homotopical diagram $q(X)\colon D[0]\to\mathscr C$ together with a weak equivalence $\sigma_X\colon q(X)\to X$; we choose $\sigma_X=\id$ whenever $X$ is already Reedy cofibrant.

Now let $f\colon D[1]\to\mathscr C$ be a morphism from $X$ to $Y$. Since $D(\partial[1])\to D[1]$ is a sieve, Lemma~\ref{lemma:relative-cofibrant-replacement} allows us to find a Reedy cofibrant diagram $q(f)\colon D[1]\to\mathscr C$ together with a weak equivalence $\sigma_f\colon q(f)\to f$ in ${\mathscr C}^{D[1]}$ such that $\sigma_f$ restricts to $\sigma_X$ respectively $\sigma_Y$ on the boundary; in particular $q(f)$ restricts to $q(X)$ respectively $q(Y)$. Again we take $\sigma_f=\id$ whenever $f$ was already Reedy cofibrant.

We now claim that $q$ is well-defined and a functor. Indeed, if $R\colon D[2]\to{\mathscr C}$ is homotopical, we can choose by another application of Lemma~\ref{lemma:relative-cofibrant-replacement} a Reedy cofibrant replacement $\widehat R$ extending the replacement on $D(\partial\Delta^2)$ chosen above. $\widehat R$ then witnesses the desired relation in $\h\nerve_f({\mathscr C})$. It remains to show that $q$ sends weak equivalences to isomorphisms. But, indeed, if $f\colon D[1]\to\mathscr C$ is homotopical with respect to the maximal homotopical structure then so is $q(f)$ by $2$-out-of-$3$ and the claim follows from Lemma~\ref{lemma:eq-in-nf}.

We note that $qi=\id$ by construction and we will now prove $iq\cong\id$. For this we define a natural transformation $\tau\colon iq\Rightarrow\id$ as follows: $\tau_X$ is the composition $D[1]\cong D([1]\times [0])\to D[1]\times D[0]\to [1]\times D[0]\to \mathscr C$ where the last map is adjunct to $\sigma_X$. We note that this is indeed an equivalence because we have just localized at such maps. To see naturality it suffices to prove compatibility with any morphism $f\colon X\to Y$ coming from an actual edge of $\widetilde{\nerve_h}({\mathscr C})$.
For this we consider the composition $D([1]\times[1])\to D[1]\times D[1]\to [1]\times D[1]\to\mathscr C$ where the last map is adjunct to $\sigma_f$. Viewing this as $\Delta^1\times\Delta^1\to\widetilde{\nerve_h{\mathscr C}}$ exhibits the desired commutativity.
\end{proof}
\end{lemma}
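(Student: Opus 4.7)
The plan is to construct an explicit quasi-inverse $q\colon\h\nerve_h(\mathscr{C})\to\h\nerve_f(\mathscr{C})$ by making compatible choices of Reedy cofibrant replacements of the diagrams representing simplices of $\widetilde{\nerve_h}(\mathscr{C})$, repeatedly invoking Lemma~\ref{lemma:relative-cofibrant-replacement} to extend these choices across the boundary sieves $D(\partial\Delta^n)\hookrightarrow D[n]$ (which are indeed sieves because $\partial\Delta^n\hookrightarrow\Delta^n$ is injective).

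For each vertex $X\colon D[0]\to\mathscr{C}$ I choose a Reedy cofibrant homotopical replacement $q(X)$ together with a weak equivalence $\sigma_X\colon q(X)\to X$, normalized so that $\sigma_X=\id$ whenever $X$ is already Reedy cofibrant. For an edge $f\colon D[1]\to\mathscr{C}$ with endpoints $X,Y$, Lemma~\ref{lemma:relative-cofibrant-replacement} extends the boundary data $(\sigma_X,\sigma_Y)$ to a weak equivalence $\sigma_f\colon q(f)\to f$ with $q(f)$ Reedy cofibrant and homotopical. Another application of the same lemma to each $2$-simplex $R$ produces a Reedy cofibrant $q(R)$ whose three $1$-faces are the already-chosen $q(f_i)$'s, which witnesses the relevant relation in $\h\nerve_f(\mathscr{C})$. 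Hence $q$ descends to a well-defined functor on $\h\widetilde{\nerve_h}(\mathscr{C})$. To see it factors through the localization $\h\nerve_h(\mathscr{C})$, observe that if $f\colon D\widehat{[1]}\to\mathscr{C}$ is homotopical then $2$-out-of-$3$ applied pointwise to $\sigma_f$ makes $q(f)$ homotopical as well, whence Lemma~\ref{lemma:eq-in-nf} identifies $q(f)$ with an equivalence in $\nerve_f(\mathscr{C})$.

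By construction $qi=\id$ strictly. To produce a natural isomorphism $\tau\colon iq\Rightarrow\id$, I take $\tau_X$ to be the edge of $\widetilde{\nerve_h}(\mathscr{C})$ obtained by viewing $\sigma_X$ via adjunction as a functor $[1]\times D[0]\to\mathscr{C}$ and pulling it back along the composite
\[
D[1]\cong D([1]\times[0])\to D[1]\times D[0]\xrightarrow{p\times\id}[1]\times D[0].
\]
Using the formula $p(Z\colon[m]\to I)=Z(m)$ one checks that the two endpoints of $\tau_X$ are precisely $q(X)$ and $X$, and that $\tau_X$ is homotopical as a functor from $D\widehat{[1]}$ (every morphism of $D[1]$ maps either to an identity in $[1]\times D[0]$ or to a morphism of the form $(0\to 1,\cdot)$, whose image is a component of $\sigma_X$). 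By construction of $\h\nerve_h(\mathscr{C})$, $\tau_X$ then represents an isomorphism. Naturality only needs to be checked against edges $f\colon X\to Y$ coming from actual $1$-simplices of $\widetilde{\nerve_h}(\mathscr{C})$, and for such an $f$ the analogous construction applied to $\sigma_f$ via $D([1]\times[1])\to D[1]\times D[1]\to[1]\times D[1]\to\mathscr{C}$ yields a $\Delta^1\times\Delta^1$-shaped diagram whose four faces are exactly $iq(f)$, $f$, $\tau_X$, and $\tau_Y$, supplying the required commutativity in $\h\nerve_h(\mathscr{C})$.

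The main obstacle is bookkeeping rather than anything conceptual: one has to keep careful track of the formulas for $D$ on objects, morphisms, and products in order to confirm that the pulled-back $\sigma_X$ really restricts correctly on the endpoints of $D[1]$ and is homotopical as a functor from $D\widehat{[1]}$, and then repeat a one-dimension-higher version of the same unwinding for naturality. Aside from this, only relative cofibrant replacement and Szumi\l{}o's characterization of equivalences in $\nerve_f(\mathscr{C})$ are needed.
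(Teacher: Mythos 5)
Your proposal is correct and follows essentially the same route as the paper: the same inductive Reedy cofibrant replacement via Lemma~\ref{lemma:relative-cofibrant-replacement} over the boundary sieves, the same normalization giving $qi=\id$ strictly, and the same construction of the natural isomorphism $\tau$ by pulling the adjuncts of $\sigma_X$ and $\sigma_f$ back along $D([1]\times[0])\to[1]\times D[0]$ and $D([1]\times[1])\to[1]\times D[1]$. The only difference is that you spell out slightly more explicitly why $\tau_X$ is inverted in the localization, which the paper leaves to the reader.
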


\begin{defi}
We denote by $\cat{HCOFCAT}$ the full subcategory of $\cat{CATWE}$ spanned by the cofibration categories.
\end{defi}

\begin{cor}\label{cor:nh-nice}
\begin{enumerate}
\item Pushforward makes $\h\nerve_h$ into a functor $\cat{HCOFCAT}\to\cat{CAT}$.
\item The above map provides a strictly natural equivalence between the restriction of this functor to $\cat{COFCAT}$ and the composition $\h\circ\nerve_f$.
\item The functor $\h\nerve_h$ sends weak equivalences of cofibration categories to equivalences.\label{item:nh-homotopical}
\end{enumerate}
\begin{proof}
The first statement is obvious and so is naturality in the second statement. The remaining part of (2) is Lemma~\ref{lemma:nf-nh-special} and the third statement now follows by $2$-out-of-$3$.
\end{proof}
\end{cor}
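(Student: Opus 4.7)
The plan is to verify the three assertions in order, with each reducing quickly to results already established in the paper.

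For part (1), I would first note that a homotopical functor $F\colon\mathscr{C}\to\mathscr{D}$ between cofibration categories induces a simplicial map $\widetilde{\nerve_h}(F)\colon\widetilde{\nerve_h}(\mathscr{C})\to\widetilde{\nerve_h}(\mathscr{D})$ by postcomposition: if $X\colon DK\to\mathscr{C}$ is homotopical, then $F\circ X$ is again homotopical since $F$ preserves weak equivalences. Passing to unenriched homotopy categories yields a functor $\h\widetilde{\nerve_h}(F)$. To descend to the further localization defining $\h\nerve_h$, I would observe that postcomposition with $F$ sends any homotopical diagram $D\widehat{[1]}\to\mathscr{C}$ to a homotopical diagram $D\widehat{[1]}\to\mathscr{D}$, so the class of edges being inverted is preserved; the universal property of the strict localization then supplies $\h\nerve_h(F)$. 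Functoriality in $F$ is inherited from that of postcomposition and of the strict localization.

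For part (2), strict naturality of the comparison map $i$ is immediate: postcomposition with $F$ commutes both with the inclusion $\nerve_f(\mathscr{C})\hookrightarrow\widetilde{\nerve_h}(\mathscr{C})$ (as Reedy cofibrancy is preserved by exact, hence by these particular homotopical, functors) and with the localization, so the two resulting squares of functors commute on the nose. That each component $i_\mathscr{C}$ is an equivalence of categories is precisely the content of Lemma~\ref{lemma:nf-nh-special}.

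For part (3), given a weak equivalence $F\colon\mathscr{C}\to\mathscr{D}$ of cofibration categories, Theorem~\ref{thm:frame-equiv} gives that $\nerve_f(F)$ is a weak equivalence in the Joyal model structure. Therefore $\h\nerve_f(F)$ is an equivalence of categories (alternatively invoking Proposition~\ref{prop:hoinf-we} at the level of underlying categories). Because $i$ is a strictly natural equivalence by part (2), the commutative square of functors
\begin{equation*}
\begin{tikzcd}
\h\nerve_f(\mathscr{C})\arrow[r,"\h\nerve_f(F)","\simeq"']\arrow[d,"i_\mathscr{C}"',"\simeq"]&\h\nerve_f(\mathscr{D})\arrow[d,"i_\mathscr{D}","\simeq"']\\
\h\nerve_h(\mathscr{C})\arrow[r,"\h\nerve_h(F)"']&\h\nerve_h(\mathscr{D})
\end{tikzcd}
\end{equation*}
has three sides that are equivalences, so by $2$-out-of-$3$ the bottom map is an equivalence as well.

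I do not expect any part to present a genuine obstacle: each statement reduces to a result already proved or to a purely formal universal property argument. The only mildly delicate point is verifying in part (1) that the class of edges inverted to form $\h\nerve_h(\mathscr{C})$ really is preserved by $F$; but this is exactly the condition that $F$ be homotopical, which is why the domain category is taken to be $\cat{HCOFCAT}$ rather than $\cat{COFCAT}$.
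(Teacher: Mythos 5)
Your proof is correct and follows essentially the same route as the paper: part (1) and the naturality in part (2) are formal consequences of pushforward preserving homotopical (resp.\ Reedy cofibrant) diagrams and the universal property of the localization, the equivalence in (2) is Lemma~\ref{lemma:nf-nh-special}, and (3) is the same $2$-out-of-$3$ argument in the naturality square (the paper later uses the identical observation that $\nerve_f$, being exact, preserves weak equivalences in Corollary~\ref{cor:we-diag}). The paper merely states all of this more tersely.
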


We will also need a more general variant of the above:

\begin{defi}\label{defi:nf-nh-general}
Let $I$ be a category. We define $\h\big(\nerve_h({\mathscr C})^{\nerve I}\big)$ to be the localization of $\h\big(\widetilde{\nerve_h}({\mathscr C})^{\nerve I}\big)$ with respect to morphisms corresponding to homotopical diagrams $D(\widehat{[1]}\times I)\to\mathscr C$.
\end{defi}

Precisely the same arguments as above show:

\begin{lemma}\label{lemma:nf-nh-general}
The composition $\h\big(\nerve_f({\mathscr C})^{\nerve I}\big)\to\h\big(\widetilde{\nerve_h}({\mathscr C})^{\nerve I}\big)\to\h\big(\nerve_h({\mathscr C})^{\nerve I}\big)$ is an equivalence.\qed
\end{lemma}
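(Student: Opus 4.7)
The plan is to repeat the argument of Lemma~\ref{lemma:nf-nh-special} almost verbatim, with $D[n]$ replaced by $D([n]\times I)$ throughout. First I would construct a candidate quasi-inverse $q$ by specifying it on vertices, edges, and $2$-simplices of $\widetilde{\nerve_h}({\mathscr C})^{\nerve I}$. On a vertex---which corresponds to a homotopical diagram $X\colon DI\to\mathscr C$---pick a Reedy cofibrant homotopical replacement $q(X)\to X$, choosing the identity whenever $X$ is already Reedy cofibrant. For an edge $f\colon D([1]\times I)\to\mathscr C$, apply Lemma~\ref{lemma:relative-cofibrant-replacement} to the inclusion $D(\partial[1]\times I)\hookrightarrow D([1]\times I)$, which is a homotopical sieve because the underlying simplicial map $\partial\Delta^1\times\nerve I\hookrightarrow\Delta^1\times\nerve I$ is injective. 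This produces a Reedy cofibrant $q(f)$ extending the previously chosen $q(X), q(Y)$, together with a weak equivalence $\sigma_f$ restricting to $\sigma_X,\sigma_Y$.

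Second, I would check that $q$ descends to a well-defined functor. Any $2$-simplex $R\colon D([2]\times I)\to\mathscr C$ witnessing a relation between edges in $\h(\widetilde{\nerve_h}({\mathscr C})^{\nerve I})$ may be replaced, by the same lemma applied to the sieve $D(\partial[2]\times I)\hookrightarrow D([2]\times I)$, by a Reedy cofibrant $\widehat R$ extending the boundary replacements; this witnesses the corresponding relation in $\h(\nerve_f({\mathscr C})^{\nerve I})$. Next I would show that $q$ inverts the designated morphisms: given an $f$ that is homotopical as a diagram $D(\widehat{[1]}\times I)\to\mathscr C$, the replacement $q(f)$ is homotopical in the same strong sense by $2$-out-of-$3$, and hence pointwise over each vertex of $\nerve I$ it is homotopical as $D\widehat{[1]}\to\mathscr C$. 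By Lemma~\ref{lemma:eq-in-nf} together with the fact that equivalences in the quasi-category $\nerve_f(\mathscr C)^{\nerve I}$ are detected on vertices of $\nerve I$, the edge $q(f)$ represents an isomorphism in $\h(\nerve_f({\mathscr C})^{\nerve I})$.

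Finally, $qi=\id$ holds by the identity choices, and for $iq\cong\id$ I would assemble a natural transformation $\tau$ pointwise from the $\sigma_X$, each encoded as an edge $D([1]\times I)\to\mathscr C$ via the composition $D([1]\times I)\cong D([1]\times[0]\times I)\to D[1]\times D([0]\times I)\to[1]\times DI\to\mathscr C$ adjunct to $\sigma_X$. Each such edge is pointwise a weak equivalence, hence becomes an isomorphism in $\h\nerve_h$. Naturality with respect to any edge $f\colon X\to Y$ is verified via the analogous two-dimensional construction $D([1]\times[1]\times I)\to\mathscr C$ built from $\sigma_f$, exactly mirroring the end of the proof of Lemma~\ref{lemma:nf-nh-special}.

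The main obstacle is the appeal in the second paragraph to the pointwise characterization of equivalences in $\nerve_f(\mathscr C)^{\nerve I}$ and the identification of ``homotopical as $D(\widehat{[1]}\times I)\to\mathscr C$'' with pointwise homotopicalness as $D\widehat{[1]}\to\mathscr C$; once this bridge is in place, all remaining verifications are direct category-theoretic translations of the special case handled in Lemma~\ref{lemma:nf-nh-special}.
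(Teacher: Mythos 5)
Your proposal is correct and is exactly what the paper intends: its entire proof of this lemma is the remark ``precisely the same arguments as above show,'' i.e.\ the argument of Lemma~\ref{lemma:nf-nh-special} with $D[n]$ replaced by $D([n]\times I)$ throughout, which is what you carry out. The one point you flag as an obstacle---reducing invertibility of $q(f)$ to Lemma~\ref{lemma:eq-in-nf} via pointwise detection of equivalences in the functor quasi-category $\nerve_f(\mathscr C)^{\nerve I}$---is a standard fact about quasi-categories and closes the argument.
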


We can now use the above to transfer several results about $\h\nerve_f$ to $\h\nerve_h$:

\begin{lemma}
The map $\h\nerve_h({\mathscr C})\to\Ho({\mathscr C})$ defined analogously to Construction~\ref{constr:theta} is well-defined and an equivalence.
\begin{proof}
It suffices to prove that it is well-defined; the remaining part then follows from Lemma~\ref{lemma:theta-natural} together with Lemma~\ref{lemma:nf-nh-special} and $2$-out-of-$3$.

It obviously sends weak equivalences to isomorphisms; hence it only remains to show that for a homotopical diagram $H\colon D[2]\to\mathscr C$ the zig-zags
\begin{equation*}
H(0\colon [0]\to[2]) \to H(d_1\colon [1]\to[2])\gets H(2\colon[0]\to[2])
\end{equation*}
and
\begin{equation*}
\begin{aligned}
H(0\colon [0]\to[2]) &\to H(d_2\colon [1]\to [2])\gets H(1\colon[0]\to[2])\\
&\to H(d_0\colon [1]\to[2])\gets H(2\colon[0]\to[2])
\end{aligned}
\end{equation*}
define the same morphism in $\Ho(\mathscr C)$. But again both sides can be viewed as natural transformations $\ev_0\Rightarrow\ev_2$ of functors $\Ho({\mathscr C}^{D[2]})\to\Ho({\mathscr C})$ and hence it suffices to prove this under the assumption that $H=p^*h$ for some $h\in\mathscr C^{[2]}$. This is obvious.
\end{proof}
\end{lemma}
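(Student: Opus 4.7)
The plan is to reduce the equivalence claim to Lemma~\ref{lemma:nf-nh-special} and Lemma~\ref{lemma:theta-natural}: once the map $\theta_h\colon\h\nerve_h(\mathscr{C})\to\Ho(\mathscr{C})$ is well-defined, it fits into a triangle with the composition $\h\nerve_f(\mathscr{C})\to\h\nerve_h(\mathscr{C})\xrightarrow{\theta_h}\Ho(\mathscr{C})$ equal to $\theta$. The first map is an equivalence by Lemma~\ref{lemma:nf-nh-special} and the slanted map is an equivalence by Lemma~\ref{lemma:theta-natural}, so $\theta_h$ is an equivalence by 2-out-of-3. Hence the only real content is well-definedness.

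Using the explicit presentation of the homotopy category of $\widetilde{\nerve_h}(\mathscr{C})$ together with the universal property of localization (cf.\ the remark after Definition of $\h\nerve_h$), I have to verify three things: (a) degenerate edges are sent to identities; (b) for every homotopical $H\colon D[2]\to\mathscr{C}$ the two possible zig-zags from $H(0)$ to $H(2)$ agree in $\Ho(\mathscr{C})$; (c) every homotopical $F\colon D\widehat{[1]}\to\mathscr{C}$ is sent to an isomorphism. For (a) a degenerate edge factors through $D[1]\to D[0]$, so its zig-zag consists of identities. For (c), a homotopical $F\colon D\widehat{[1]}\to\mathscr{C}$ sends every morphism of $D\widehat{[1]}$ to a weak equivalence by definition of the maximal homotopical structure, so both legs of the zig-zag $F(0)\to F(\id)\gets F(1)$ are weak equivalences.

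The main obstacle is (b), the $2$-simplex coherence. The strategy is to view both zig-zags as natural transformations $\ev_0\Rightarrow\ev_2$ between functors $\Ho(\mathscr{C}^{D[2]})\to\Ho(\mathscr{C})$. By Proposition~\ref{prop:cof-hoder-models}-(\ref{item:p-star}) the functor $p^*\colon\Ho(\mathscr{C}^{[2]})\to\Ho(\mathscr{C}^{D[2]})$ is an equivalence and in particular essentially surjective, so two such transformations coincide as soon as they do so after restriction along $p^*$. For a diagram of the form $p^*h$ with $h\colon[2]\to\mathscr{C}$ the backward arrows in both zig-zags become identities (since $p$ sends $1\colon[0]\to[1]$, and analogous inclusions, to the identity), while the forward arrows compose to $h(0\to 2)$ in either direction; so the two agree trivially in that case.

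I expect step (b) to be the only nontrivial verification, and the elegance lies in not having to construct an explicit homotopy inside $\mathscr{C}$: the equivalence $p^*$ absorbs all of the combinatorics of $D[2]$, after which the claim reduces to the strict commutativity of composition in the ordinary functor category $\mathscr{C}^{[2]}$.
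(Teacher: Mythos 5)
Your proposal is correct and follows essentially the same route as the paper: reduce the equivalence claim to Lemma~\ref{lemma:nf-nh-special}, Lemma~\ref{lemma:theta-natural}, and 2-out-of-3, and then handle the only nontrivial well-definedness check (the $2$-simplex coherence) by viewing both zig-zags as natural transformations $\ev_0\Rightarrow\ev_2$ on $\Ho(\mathscr{C}^{D[2]})$ and restricting along the equivalence $p^*$ to the case $H=p^*h$. Your extra verifications of the degenerate-edge and weak-equivalence conditions, and the explicit computation for $p^*h$, are correct fillings-in of steps the paper dismisses as obvious or vacuous.
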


\begin{lemma}
Let $I$ be a small category. Then the map $\h\nerve_h({\mathscr C}^{DI})\to\h\left(\nerve_h({\mathscr C})^{\nerve I}\right)$ defined analogously to Construction~\ref{constr:frame-diag} is well-defined and an equivalence.
\begin{proof}
As before it suffices to prove that this is well-defined. For this we note that it is induced under $\h$ from the map $\phi\colon\widetilde{\nerve_h}({\mathscr C}^{DI})\to\widetilde{\nerve_h}({\mathscr C})^{\nerve I}$ given degreewise by $(D\pr_1,D\pr_2)^*$, and accordingly it suffices to prove that $\phi$ preserves weak equivalences. This is immediate from the definition.
\end{proof}
\end{lemma}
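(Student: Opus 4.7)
My plan is to proceed in two steps: first to construct the map at the level of the representing simplicial sets and check that it descends to the required functor on homotopy categories, and then to deduce the equivalence claim from the already-established analogous result for $\nerve_f$ via a two-out-of-three argument.

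For the first step, I would mimic Construction~\ref{constr:frame-diag} at the underlying-simplicial-set level and \emph{without} insisting on Reedy cofibrancy. Concretely I would define a simplicial map $\phi\colon\widetilde{\nerve_h}(\mathscr{C}^{DI})\to\widetilde{\nerve_h}(\mathscr{C})^{\nerve I}$ whose component in degree $n$ is restriction along $D(\nerve I\times\Delta^n)\to DI\times D[n]$ (combined with the standard adjunction $\mathscr{C}^{DI\times D[n]}\cong(\mathscr{C}^{DI})^{D[n]}$). The only thing to check is that this map actually lands in $\widetilde{\nerve_h}(\mathscr{C})^{\nerve I}$, that is, that restriction preserves homotopical diagrams. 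This is immediate because the weak equivalences on $D(\nerve I\times\Delta^n)$ are created by the functor $p$, and the square relating the various $p$'s commutes. Applying $\h$ and passing to the localizations defining $\h\nerve_h(\mathscr C^{DI})$ and $\h(\nerve_h(\mathscr C)^{\nerve I})$ then requires the additional observation that $\phi$ sends the edges at which we localize --- those corresponding to homotopical diagrams $D\widehat{[1]}\to\mathscr{C}^{DI}$ --- to edges at which we localize on the other side, namely homotopical diagrams $D(\widehat{[1]}\times\nerve I)\to\mathscr C$. This is again a direct computation: restriction along $D(\widehat{[1]}\times \nerve I)\to D\widehat{[1]}\times DI$ preserves the maximal-on-$\widehat{[1]}$ notion of being homotopical, since it does so on each $D(\widehat{[1]}\times[n])$-slice by the previous observation.

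For the second step, I would produce the commutative square
\begin{equation*}
\begin{tikzcd}
\h\nerve_f(\mathscr{C}^{DI}_{\textup{R}}) \arrow[r] \arrow[d] & \h\big(\nerve_f(\mathscr{C})^{\nerve I}\big) \arrow[d]\\
\h\nerve_h(\mathscr{C}^{DI}) \arrow[r] & \h\big(\nerve_h(\mathscr{C})^{\nerve I}\big)
\end{tikzcd}
\end{equation*}
in which the top horizontal arrow is the map from Construction~\ref{constr:frame-diag} (passed to homotopy categories) and the bottom is the map we have just constructed; the left vertical arrow is the composite of the inclusion $\mathscr{C}^{DI}_{\textup R}\hookrightarrow\mathscr{C}^{DI}$ with the comparison of Lemma~\ref{lemma:nf-nh-special} applied to the cofibration category $\mathscr{C}^{DI}$, and the right vertical arrow is the comparison of Lemma~\ref{lemma:nf-nh-general}. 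Commutativity is essentially by construction, since both routes through the square are induced by the same restriction $(D\pr_1,D\pr_2)^*$ at the degreewise level. The top map is an equivalence by Theorem~\ref{prop:phi-equiv} together with Proposition~\ref{prop:hoinf-we}, the left vertical is an equivalence by combining Proposition~\ref{prop:cof-hoder-models}-(\ref{item:reedy-repl}) with Corollary~\ref{cor:nh-nice}-(\ref{item:nh-homotopical}) applied to this weak equivalence of cofibration categories, and the right vertical is an equivalence directly by Lemma~\ref{lemma:nf-nh-general}. Two-out-of-three then forces the bottom map to be an equivalence as well.

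There is no real obstacle here; the only subtlety is bookkeeping between the two distinct localization classes (the weak equivalences in $DK$ used to define $\widetilde{\nerve_h}$, and the separate class of ``homotopy edges'' $D\widehat{[1]}$ used to pass from $\h\widetilde{\nerve_h}$ to $\h\nerve_h$), each of which is preserved by $\phi$ for essentially the same formal reason. Once the square above is in place, no further work is required because Theorem~\ref{prop:phi-equiv} has already absorbed all of the real content.
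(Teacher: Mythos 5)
Your proposal is correct and follows essentially the same route as the paper: the well-definedness is reduced to checking that the degreewise restriction map $\phi$ preserves homotopical diagrams and the localizing edges, and the equivalence claim is then obtained by two-out-of-three against the already-established $\nerve_f$-version (Theorem~\ref{prop:phi-equiv}) via the comparison equivalences of Lemma~\ref{lemma:nf-nh-special}/\ref{lemma:nf-nh-general} and Proposition~\ref{prop:cof-hoder-models}. The paper merely compresses this into ``as before it suffices to prove well-definedness,'' so your explicit commutative square is just an unpacking of the same argument.
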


\subsection{The final comparison step}
We begin by using the results of the previous section to get another interpretation of the isomorphism from (\ref{diag:transfer-iso}).

\begin{lemma}
The map $e\colon\h\nerve_h({\mathscr C}^{DI\times D[0]})\to\h\nerve_h({\mathscr C}^{DI})$ defined analogously to Construction~\ref{constr:e} is well-defined and left-inverse to the map induced from $\pr\colon DI\times D[0]\to DI$. In particular, it is an equivalence.
\begin{proof}
For the first statement we can apply literally the same proof as Lemma~\ref{lemma:commutative-face} once we note that $D\widehat{[1]}\to D\widehat{[1]}\times D[0]$ is homotopical for trivial reasons. The second statement is trivial and the third one now follows from Corollary~\ref{cor:nh-nice}-(\ref{item:nh-homotopical}).
\end{proof}
\end{lemma}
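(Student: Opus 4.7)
The plan is to verify the three assertions in sequence. For the well-definedness of $e$, I would follow the template of Lemma~\ref{lemma:commutative-face} almost verbatim, dropping the Reedy cofibrancy clauses. The output $\widetilde X\colon D[n] \times DI \to \mathscr{C}$ produced by $e$ from a homotopical $X$ is obtained by restriction along a map that preserves homotopicality for obvious reasons; the analogous statement on morphisms is similar. The key point that must hold additionally in order for $e$ to descend from $\h\widetilde{\nerve_h}$ to the localization $\h\nerve_h$ is that localizing edges—i.e.\ those $F\colon D\widehat{[1]} \times DI \times D[0] \to \mathscr C$ that are homotopical with the maximal structure on the first factor—get sent to localizing edges. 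This follows directly once one observes that every morphism in $D[0]$ is a weak equivalence by Construction~\ref{constr:d}: unraveling the formula for $eF$ on a morphism $(i,\id_d)$ gives $F(i,\id_d,i) = F(i,\id_d,\id_g)\circ F(\id_f,\id_d,i)$, in which the first factor is a weak equivalence by the maximal-homotopicality of $F$ and the second is a weak equivalence by the (ordinary) homotopicality of $F$ together with the fact that $i$ is a weak equivalence in $D[0]$.

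The left-inverse property is immediate by unraveling: given $Y\colon D[0] \times DI \to \mathscr{C}$, one computes $(\pr^* Y)([m]\to[0], f, [m']\to[0]) = Y([m]\to[0], f)$, and therefore $e(\pr^* Y)([m]\to[0], f) = (\pr^* Y)([m]\to[0], f, [m]\to[0]) = Y([m]\to[0], f)$, recovering $Y$ itself; the verification on $1$-simplices is identical in spirit.

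For the third assertion I would invoke Corollary~\ref{cor:nh-nice}-(\ref{item:nh-homotopical}): it suffices to know that $\pr^*\colon \mathscr{C}^{DI} \to \mathscr{C}^{DI \times D[0]}$ is a weak equivalence of cofibration categories, since combined with $e \circ \pr^* = \id$ this forces $\h\nerve_h(\pr^*)$ to be an equivalence with $e$ as its quasi-inverse. To see this, I would identify $\pr^*$ with an instance of the map from Proposition~\ref{prop:cof-hoder-models}-(\ref{item:p-star}). Under the exponential law isomorphism $\mathscr{C}^{DI \times D[0]} \cong (\mathscr{C}^{DI})^{D[0]}$ of cofibration categories, $\pr^*$ becomes $p^*\colon (\mathscr{C}^{DI})^{[0]} \to (\mathscr{C}^{DI})^{D[0]}$ induced by $p\colon D[0] \to [0]$. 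But Proposition~\ref{prop:cof-hoder-models}-(\ref{item:p-star}), applied to the cofibration category $\mathscr{C}^{DI}$ in place of $\mathscr C$ and to $I=[0]$, states precisely that this map is a weak equivalence. Hence $\pr^*$ is one as well, and the conclusion follows.

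The only real obstacle is the well-definedness verification in the first step; the remainder is a direct bookkeeping exercise, greatly simplified by the identification $\pr^* = p^*$ under the exponential law.
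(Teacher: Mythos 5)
Your proposal is correct and follows essentially the same route as the paper: well-definedness by rerunning the proof of Lemma~\ref{lemma:commutative-face} plus the observation that the relevant restriction functor preserves localizing edges (your factorization argument is just an expanded form of the paper's remark that $D\widehat{[1]}\to D\widehat{[1]}\times D[0]$ is homotopical because every morphism of $D[0]$ is a weak equivalence), the left-inverse property by direct unraveling, and the equivalence via Corollary~\ref{cor:nh-nice}-(\ref{item:nh-homotopical}). Your identification of $\pr^*$ with $p^*\colon(\mathscr{C}^{DI})^{[0]}\to(\mathscr{C}^{DI})^{D[0]}$ via the exponential law correctly supplies the step the paper leaves implicit, namely that $\pr^*$ is a weak equivalence of cofibration categories.
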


\begin{constr}
We have an adjoint equivalence $e\dashv \pr^*$ where the counit is the identity and an adjoint equivalence $(D\incl)^*\dashv(D\pr)^*$ (where $\pr\colon I\times[0]\to I$ and $\incl\colon I\to I\times[0]$ are the obvious maps) with both unit and counit the identity. Moreover we have a commutative diagram
\begin{equation*}
\begin{tikzcd}
\h\nerve_h(\mathscr{C}^{DI\times D[0]})\arrow[d, "{(D\pr_1, D\pr_2)^*}"'] & \arrow[l, "\pr^*"'] \h\nerve_h(\mathscr{C}^{DI})\arrow[d, "\id"]\\
\h\nerve_h(\mathscr{C}^{D(I\times[0])}) & \arrow[l, "(D\pr)^*"]\h\nerve_h(\mathscr{C}^{DI}).
\end{tikzcd}
\end{equation*}
Passing to the canonical mates with respect to the above adjunctions we get an isomorphism
\begin{equation}\label{diag:transfer-iso-large}
\begin{tikzcd}
\h\nerve_h(\mathscr{C}^{DI\times D[0]})\arrow[d, "{(D\pr_1, D\pr_2)^*}"'] \arrow[r, "e"] & \h\nerve_h(\mathscr{C}^{DI})\arrow[d, "\id"]\twocell[dl]\\
\h\nerve_h(\mathscr{C}^{D(I\times[0])}) \arrow[r, "(D\incl)^*"'] &\h\nerve_h(\mathscr{C}^{DI}).
\end{tikzcd}
\end{equation}
\end{constr}

\begin{lemma}\label{lemma:nh-vs-nf-d}
The diagram
\begin{equation*}
\begin{tikzcd}[column sep=0pt]
& \h\nerve_h(\mathscr{C}^{DI\times D[0]})\arrow[rr]\arrow[dd] & & \h\nerve_h(\mathscr{C}^{DI})\arrow[dd]\\
\h\nerve_f(\mathscr{C}_{\textup{R}}^{DI\times D[0]})\arrow[ur]\arrow[rr, crossing over] & & \h\nerve_f(\mathscr{C}_{\textup{R}}^{DI})\arrow[ur]\\
& \h\nerve_h(\mathscr{C}^{D(I\times[0])})\arrow[rr] & & \h\nerve_h(\mathscr{C}^{DI}),\\
\h\nerve_f(\mathscr{C}_{\textup{R}}^{D(I\times[0])})\arrow[from=uu]\arrow[rr]\arrow[ur] & &\h\nerve_f(\mathscr{C}_{\textup{R}}^{DI})\arrow[from=uu, crossing over]\arrow[ur]
\end{tikzcd}
\end{equation*}
where the front face is from (\ref{diag:transfer-iso}) and the back face is (\ref{diag:transfer-iso-large}), is coherent.
\begin{proof}
By the construction of (\ref{diag:transfer-iso}) and since all relevant maps are equivalences it suffices to prove this for 
\begin{equation*}
\begin{tikzcd}[column sep=0pt]
& \Ho(\mathscr{C}^{DI\times D[0]})\arrow[rr]\arrow[dd] & & \Ho(\mathscr{C}^{DI})\arrow[dd]\\
\h\nerve_h(\mathscr{C}^{DI\times D[0]})\arrow[ur]\arrow[rr, crossing over] & & \h\nerve_h(\mathscr{C}^{DI})\arrow[ur]\\
& \Ho(\mathscr{C}^{D(I\times[0])})\arrow[rr] & & \Ho(\mathscr{C}^{DI}).\\
\h\nerve_h(\mathscr{C}^{D(I\times[0])})\arrow[from=uu]\arrow[rr]\arrow[ur] & &\h\nerve_h(\mathscr{C}^{DI})\arrow[from=uu, crossing over]\arrow[ur]
\end{tikzcd}
\end{equation*}
Here the front and back face are filled with the isomorphisms discussed above and all other faces are filled with the respective identities. 
For this it is enough to prove this after passing to canonical mates in $x$-direction (using the adjunctions established before, i.e.~in particular viewing the above maps as \emph{left} adjoints), which is a diagram
\begin{equation*}
\begin{tikzcd}[column sep=0pt]
& \Ho(\mathscr{C}^{DI\times D[0]})\arrow[dd] & &\arrow[ll] \Ho(\mathscr{C}^{DI})\arrow[dd]\\
\h\nerve_h(\mathscr{C}^{DI\times D[0]})\arrow[ur] & &\arrow[ll, crossing over] \h\nerve_h(\mathscr{C}^{DI})\arrow[ur]\\
& \Ho(\mathscr{C}^{D(I\times[0])}) & &\arrow[ll] \Ho(\mathscr{C}^{DI}).\\
\h\nerve_h(\mathscr{C}^{D(I\times[0])})\arrow[from=uu]\arrow[ur] & &\arrow[ll]\h\nerve_h(\mathscr{C}^{DI})\arrow[from=uu, crossing over]\arrow[ur]
\end{tikzcd}
\end{equation*}
with $2$-cells yet to be identified. But by construction the front and back face of the resulting cube are filled with the identity transformation and since all the relevant units and counits are the identities the same is true for the remaining faces and the claim follows.
\end{proof}
\end{lemma}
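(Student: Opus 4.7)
The plan is to exploit the fact that both $2$-cells filling the front face (from~(\ref{diag:transfer-iso})) and the back face (from~(\ref{diag:transfer-iso-large})) were defined as canonical mates of \emph{identity} transformations with respect to adjoint equivalences whose units and counits are themselves identities (or are trivial in the relevant directions). Together with the compatibility of mate calculus with pasting, this should collapse the coherence question to a strictly commutative check.

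First, I would record that every front-to-back map appearing in the cube is an equivalence: on the top square these are the composites $\incl\circ\theta$ (equivalences by Lemma~\ref{lemma:theta-natural} and Proposition~\ref{prop:cof-hoder-models}-(\ref{item:reedy-repl})) and the analogous maps to $\h\nerve_h$, while on the bottom we use Lemma~\ref{lemma:nf-nh-special} (or its generalization Lemma~\ref{lemma:nf-nh-general}) paired with the equivalence $\h\nerve_h\to\Ho$. Since (\ref{diag:transfer-iso}) was defined as the \emph{unique} natural isomorphism filling its front face so that stacking with~(\ref{diag:h-e-mate}) on top yields a coherent cube, it suffices to establish coherence of the auxiliary cube whose top face is the $\Ho(\mathscr{C}^{...})$-version and whose bottom face is the $\h\nerve_h(\mathscr{C}^{...})$-version of~(\ref{diag:transfer-iso-large}), with front and back filled by the respective canonical isomorphisms and the remaining faces by identities.

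Next, I would pass to canonical mates in the $x$-direction using the adjoint equivalences $e\dashv\pr^*$ and $(D\incl)^*\dashv(D\pr)^*$ fixed just before~(\ref{diag:transfer-iso-large}), together with the analogous adjunctions $\id\dashv\id$ and $D(\id,0)^*\dashv D(\pr_1)^*$ implicit in Construction~\ref{constr:homotopic-eval}. By construction all units and counits in play are the identity, so by the compatibility of mates with pasting the front $2$-cell and the back $2$-cell both become identity transformations after this flip. What remains is a strictly commutative cube whose faces are all filled with identities, which follows by a direct unwinding of the definitions: the vertical maps are the appropriate instances of $\incl\circ\theta$, the horizontal maps are restrictions along $D(\pr_1,\pr_2)$ and its variants, and all four resulting squares commute on the nose because $e$ was \emph{defined} to encode the relevant pullback along the projection $DI\times D[0]\to DI$.

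The main technical hurdle, and the reason for proceeding via the mate reformulation rather than a direct pasting computation, is the bookkeeping surrounding which of the many maps play the role of left versus right adjoint at each stage, and verifying that the triviality of the relevant units and counits propagates through all the cubical faces. Once this is in hand the argument is genuinely elementary; the actual geometric content of the lemma is just that the homotopical passage from $X\colon DI\times D[0]\to\mathscr C$ to the restriction $X(\blank,[0]\to[0])$ coincides, up to the canonical isomorphisms, with the analogous passage through $D(I\times[0])$.
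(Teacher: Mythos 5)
Your argument is correct and follows essentially the same route as the paper's own proof: you reduce to the auxiliary cube with the $\Ho(\mathscr{C}^{\blank})$-version (\ref{diag:h-e-mate}) on one side and the $\h\nerve_h$-version (\ref{diag:transfer-iso-large}) on the other using the defining uniqueness property of (\ref{diag:transfer-iso}), then pass to canonical mates in the $x$-direction and observe that all faces collapse to identities because the relevant units and counits are trivial. No substantive differences from the paper's proof.
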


\begin{lemma}\label{lemma:nh-vs-nh}
The diagram
\begin{equation}\label{diag:the-final-obstacle-easier}
\begin{tikzcd}[column sep=0pt]
& \h\big(\nerve_h(\mathscr{C})^{\nerve I\times \Delta^0}\big)\arrow[rr]\arrow[dd] & & \h\big(\nerve_h(\mathscr{C})^{\nerve I}\big)\arrow[dd]\\
\h\nerve_h(\mathscr{C}^{DI\times D[0]})\arrow[ur]\arrow[rr, crossing over] & & \h\nerve_h(\mathscr{C}^{DI})\arrow[ur]\\
& \h\big(\nerve_h(\mathscr{C})^{\nerve I\times \Delta^0}\big)\arrow[rr] & & \h\big(\nerve_h(\mathscr{C})^{\nerve I}\big),\\
\h\nerve_h(\mathscr{C}^{D(I\times[0])})\arrow[from=uu]\arrow[rr]\arrow[ur] & &\h\nerve_h(\mathscr{C}^{DI})\arrow[from=uu, crossing over]\arrow[ur]
\end{tikzcd}
\end{equation}
where the front face is given by (\ref{diag:transfer-iso-large}), is coherent.
\begin{proof}
Again it suffices to prove this after passing to canonical mates in the $x$-direction (viewing the above as left adjoints), which is strictly commutative by the same argument as above.
\end{proof}
\end{lemma}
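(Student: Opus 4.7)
My plan is to follow the exact same strategy as the proof of Lemma~\ref{lemma:nh-vs-nf-d} and reduce coherence of the cube to strict commutativity by passing to canonical mates in the horizontal (``$x$-'') direction. The four horizontal arrows of the cube are all equivalences: on the front face, $e$ is the left adjoint of $\pr^*$ (with identity counit) and $(D\incl)^*$ is the left adjoint of $(D\pr)^*$ (with identity unit and counit); on the back face the horizontal arrows are induced by the canonical isomorphism $\nerve I\times\Delta^0\cong\nerve I$, so they are themselves isomorphisms that we may treat as left adjoints with trivial units and counits.

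The first step is the formal observation that the cube~(\ref{diag:the-final-obstacle-easier}) is coherent if and only if the reflected cube obtained by inverting all horizontal arrows is coherent, where each face of the new cube is filled with the canonical mate (with respect to the above horizontal adjunctions) of the corresponding face of the original cube. This is a consequence of the compatibility of mating with pasting together with the bijectivity of the mate correspondence.

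The second step is to identify each of the mated faces. The front face~(\ref{diag:transfer-iso-large}) was itself defined as the canonical mate, in the \emph{vertical} direction with respect to the very same adjunctions $e\dashv\pr^*$ and $(D\incl)^*\dashv(D\pr)^*$, of the strictly commutative square
\begin{equation*}
\begin{tikzcd}
\h\nerve_h(\mathscr{C}^{DI\times D[0]})\arrow[d, "{(D\pr_1,D\pr_2)^*}"'] & \arrow[l, "\pr^*"'] \h\nerve_h(\mathscr{C}^{DI})\arrow[d, "\id"]\\
\h\nerve_h(\mathscr{C}^{D(I\times[0])}) & \arrow[l, "(D\pr)^*"] \h\nerve_h(\mathscr{C}^{DI}),
\end{tikzcd}
\end{equation*}
so the horizontal mate tautologically recovers this commutative square and is thus an identity. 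The back face is strictly commutative by inspection (both horizontal arrows are the canonical identification, both vertical arrows are identities), so its mate is an identity. The top and bottom faces of the original cube are filled with identity $2$-cells by construction: the front-to-back maps are the strictly natural $\Phi$-like morphisms of Construction~\ref{constr:frame-diag} adapted to the $\nerve_h$ setting, which commute strictly with the horizontal restrictions under consideration. Since all units and counits of the horizontal adjunctions are themselves identities, the mates of these identity $2$-cells remain identities.

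Thus every face of the reflected cube is filled with an identity $2$-cell; the reflected cube is strictly commutative, and the original cube is coherent. I expect the main obstacle not to be conceptual, but rather the careful bookkeeping required to verify that the various adjunction conventions, choices of units and counits, and the strict naturality of $\Phi$ with respect to the relevant horizontal restrictions all line up correctly; no genuinely new input is needed beyond the arguments already used for Lemma~\ref{lemma:nh-vs-nf-d}.
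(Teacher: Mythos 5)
Your proposal is correct and follows essentially the same route as the paper's (very terse) proof: pass to canonical mates in the horizontal direction and check that every face of the resulting cube is an identity $2$-cell on a strictly commutative square. The only minor imprecision is your blanket claim that \emph{all} units and counits of the horizontal adjunctions are identities --- the unit of $e\dashv\pr^*$ is not --- but the mates of the top and bottom faces only invoke the counit of $e\dashv\pr^*$ (respectively of $(D\incl)^*\dashv(D\pr)^*$) and the trivial units of the back-face adjunctions, so the argument stands as written.
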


Together we get:

\begin{cor}\label{cor:the-final-obstacle}
The diagram
\begin{equation}\label{diag:the-final-obstacle}
\begin{tikzcd}[column sep=0pt]
& \h\big(\nerve_f(\mathscr{C})^{\nerve I\times \Delta^0}\big)\arrow[rr]\arrow[dd] & & \h\big(\nerve_f(\mathscr{C})^{\nerve I}\big)\arrow[dd]\\
\h\nerve_f(\mathscr{C}_{\textup{R}}^{DI\times D[0]})\arrow[ur]\arrow[rr, crossing over] & & \h\nerve_f(\mathscr{C}_{\textup{R}}^{DI})\arrow[ur]\\
& \h\big(\nerve_f(\mathscr{C})^{\nerve I\times \Delta^0}\big)\arrow[rr] & & \h\big(\nerve_f(\mathscr{C})^{\nerve I}\big),\\
\h\nerve_f(\mathscr{C}_{\textup{R}}^{D(I\times[0])})\arrow[from=uu]\arrow[rr]\arrow[ur] & &\h\nerve_f(\mathscr{C}_{\textup{R}}^{DI})\arrow[from=uu, crossing over]\arrow[ur]
\end{tikzcd}
\end{equation}
where the front face is (\ref{diag:transfer-iso}), is coherent.
\begin{proof}
Lemma~\ref{lemma:nf-nh-general} together with Lemma~\ref{lemma:nh-vs-nf-d} reduces this to the coherence of (\ref{diag:the-final-obstacle-easier}), which has been verified in Lemma~\ref{lemma:nh-vs-nh}.
\end{proof}
\end{cor}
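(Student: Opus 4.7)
The plan is to stack the cubes supplied by Lemma~\ref{lemma:nh-vs-nf-d} and Lemma~\ref{lemma:nh-vs-nh} along their common face (\ref{diag:transfer-iso-large}), and then to transport across the strictly natural equivalences provided by Lemma~\ref{lemma:nf-nh-general}.

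More concretely, Lemma~\ref{lemma:nh-vs-nf-d} furnishes a coherent cube whose front face is (\ref{diag:transfer-iso}) (the same 2-cell decorating the front face of (\ref{diag:the-final-obstacle})) and whose back face is (\ref{diag:transfer-iso-large}) living at the $\h\nerve_h(\mathscr{C}^{D-})$-layer, while Lemma~\ref{lemma:nh-vs-nh} provides a second coherent cube whose front face is (\ref{diag:transfer-iso-large}) and whose back face is (\ref{diag:the-final-obstacle-easier}) at the $\h\big(\nerve_h(\mathscr{C})^{\nerve-}\big)$-layer. Gluing these along the shared face (\ref{diag:transfer-iso-large}) and collapsing the intermediate layer produces a coherent cube with front face (\ref{diag:transfer-iso}) and back face (\ref{diag:the-final-obstacle-easier}).

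To finish, I would invoke Lemma~\ref{lemma:nf-nh-general}, which supplies strictly natural equivalences $\h\big(\nerve_f(\mathscr{C})^{\nerve -}\big)\to\h\big(\nerve_h(\mathscr{C})^{\nerve -}\big)$ that are strictly compatible with the horizontal restriction maps of interest (as is visible from the explicit quasi-inverse $q$ constructed in the proof of Lemma~\ref{lemma:nf-nh-special}, which is functorial under restriction along $\nerve I\times\Delta^0\to\nerve I$). Transporting the back face of the stacked cube through these equivalences yields the canonical filling of the back face of (\ref{diag:the-final-obstacle}), and strict naturality turns the side cubes arising from the transport into commutative squares, preserving coherence. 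I do not anticipate a genuine obstacle here: the essential mathematical content, namely matching (\ref{diag:transfer-iso}) with the mate-of-identity $2$-cell (\ref{diag:transfer-iso-large}) and then with the canonical $2$-cell at the $\h\big(\nerve_h(\mathscr{C})^{\nerve-}\big)$-layer, has already been discharged in the two input lemmas, so the present argument is a bookkeeping exercise.
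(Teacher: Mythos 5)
Your proposal is correct and follows essentially the same route as the paper: the paper likewise combines Lemma~\ref{lemma:nh-vs-nf-d}, Lemma~\ref{lemma:nh-vs-nh}, and the strictly natural equivalences of Lemma~\ref{lemma:nf-nh-general} to reduce the coherence of (\ref{diag:the-final-obstacle}) to that of (\ref{diag:the-final-obstacle-easier}). The only small imprecision is that the strict naturality you need is that of the forward comparison functor of Lemma~\ref{lemma:nf-nh-general} (induced by the inclusion $\nerve_f(\mathscr{C})\hookrightarrow\widetilde{\nerve_h}(\mathscr{C})$ followed by localization), not of the chosen quasi-inverse $q$, which need not be strictly natural---but since the forward maps are equivalences, whiskering with them already detects equality of the relevant $2$-cells.
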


\begin{prop}\label{prop:final-equivalence}
The maps $\Phi$ from Construction~\ref{constr:frame-diag} assemble into a natural equivalence of prederivators $\HOND({\mathscr C})\to\HOinf\big(\nerve_f({\mathscr C})\big)$.
\begin{proof}
Obviously $\Phi$ provides a strictly natural transformation of the underlying $1$-functors and it is an equivalence by Proposition~\ref{prop:phi-equiv}. Accordingly it only remains to prove compatibility with $2$-cells.

A trivial calculation shows that the pastings
\begin{equation*}
\begin{tikzcd}
\h\nerve_f(\mathscr{C}_{\textup{R}}^{DI\times D[1]}) \arrow[r, "\Phi"] &[-.5em]
\h\big(\nerve_f(\mathscr{C}_{\textup{R}}^{DI})^{\Delta^1}\big) \arrow[r, "\ev_0"{name=s, above}, bend left=20pt]\arrow[r, "\ev_1"{name=t, below,xshift=2pt}, bend right=20pt]\twocell[from=s, to=t] &
\h\nerve_f(\mathscr{C}_{\textup{R}}^{DI})\arrow[r, "\Phi"] &[-.5em]
\h\big(\nerve_f(\mathscr{C})^{\nerve I})
\end{tikzcd}
\end{equation*}
and
\begin{equation*}
\begin{tikzcd}
\h\nerve_f(\mathscr{C}_{\textup{R}}^{DI\times D[1]}) \arrow[r, "\Phi"] &[-.5em]
\h\big(\nerve_f(\mathscr{C}_{\textup{R}}^{DI})^{\Delta^1}\big) \arrow[r, "\Phi"] &[-.5em]
\h\big(\nerve_f(\mathscr{C})^{\nerve I\times\Delta^1}\big) \arrow[r, "\ev_0"{name=s, above}, bend left=20pt]\arrow[r, "\ev_1"{name=t, below,xshift=2pt}, bend right=20pt]\twocell[from=s, to=t] &
\h\big(\nerve_f(\mathscr{C})^{\nerve I})
\end{tikzcd}
\end{equation*}
agree (even before passing to homotopy categories). To prove compatibility of $\Phi$ with the rest of Construction~\ref{constr:hond-trafo} we apply the same strategy as in the proof of Proposition~\ref{prop:2-cells-natural}. The only non-trivial part to show here is that the diagram (\ref{diag:the-final-obstacle}) is coherent, which is accounted for by Corollary~\ref{cor:the-final-obstacle} above.
\end{proof}
\end{prop}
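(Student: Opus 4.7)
My plan is to verify the three properties required of an equivalence of prederivators: strict naturality for $1$-functors, levelwise equivalence, and compatibility with $2$-cells. The first follows immediately from Construction~\ref{constr:frame-diag} and naturality of the restriction-along-$D$ map; the second is exactly Proposition~\ref{prop:phi-equiv}. So the entire content lies in $2$-cell compatibility, and here the strategy is to mirror the approach of Proposition~\ref{prop:2-cells-natural}: decompose Construction~\ref{constr:hond-trafo} into its constituent diagrams, dispose of all pieces tautologically, and concentrate the work on the single remaining non-trivial coherence.

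I would start with the ``outer'' portions of the pasting in Construction~\ref{constr:hond-trafo}. The top part (involving $(Df)^*,(Dt)^*,(Dg)^*$ and the two evaluations into $\h\nerve_f(\mathscr C_{\text{R}}^{D(I\times[1])})$) maps under $\Phi$ to the analogous functoriality pasting inside $\HOinf(\nerve_f(\mathscr C))$ on the nose, by strict naturality of $\Phi$ on the $1$-functor level. Similarly, the lower diamond with the two evaluations $\ev_0,\ev_1$ on $\nerve_f(\mathscr C_{\text{R}}^{DI})^{\Delta^1}$ and the obvious $2$-cell matches, even before passing to homotopy categories, the evaluation $2$-cell on $\nerve_f(\mathscr C)^{\nerve I\times\Delta^1}$ after precomposing with another $\Phi$; this is a direct unpacking of the definition. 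Thus after these identifications we are left to verify compatibility of $\Phi$ with the middle piece, which is precisely the isomorphism from diagram~(\ref{diag:transfer-iso}) comparing $e$ with $\ev_{[0]\to[0]}$.

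The main obstacle is therefore to show the coherence of diagram~(\ref{diag:the-final-obstacle}), i.e.\ that the isomorphism from (\ref{diag:transfer-iso}) is compatible with $\Phi$ in both its top and bottom instance. Here the indirectness in how (\ref{diag:transfer-iso}) was defined (as the unique filler of a cube with three commuting faces of equivalences) makes a direct verification hopeless. The way around this is precisely the $\h\nerve_h$-machinery developed in Section~2.4: by Lemma~\ref{lemma:nf-nh-general} all relevant vertices of (\ref{diag:the-final-obstacle}) are equivalent to their $\h\nerve_h$-variants, and by Lemma~\ref{lemma:nh-vs-nf-d} the ``front face'' isomorphism (\ref{diag:transfer-iso}) corresponds under these equivalences to the explicit isomorphism (\ref{diag:transfer-iso-large}) built from honest adjoint equivalences with identity units and counits. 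Then Lemma~\ref{lemma:nh-vs-nh} verifies the analogous cube for $\h\nerve_h$ by passing to canonical mates in the $x$-direction, where everything collapses to strict commutativity. Assembling these three lemmas yields Corollary~\ref{cor:the-final-obstacle}, which is exactly what we need.

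With coherence of (\ref{diag:the-final-obstacle}) in hand, the pasting defining $\Phi\circ \tau_{\text{N}}^D$ rewrites step by step into the pasting defining the $2$-cell $\tau^*$ on $\HOinf(\nerve_f(\mathscr C))$, finishing the proof. The conceptual point is that all the intricacy of Construction~\ref{constr:hond-trafo} was specifically engineered so that $\Phi$-compatibility reduces to the single coherence statement controllable via the auxiliary $\h\nerve_h$-model, and I expect no further subtleties beyond careful bookkeeping of pastings once Corollary~\ref{cor:the-final-obstacle} is invoked.
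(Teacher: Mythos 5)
Your proposal is correct and follows essentially the same route as the paper: reduce everything to $2$-cell compatibility, dispose of the outer pieces of Construction~\ref{constr:hond-trafo} by strict naturality of $\Phi$ and a direct unpacking of the evaluation $2$-cells, and isolate the coherence of diagram~(\ref{diag:the-final-obstacle}) as the only non-trivial step, settled by Corollary~\ref{cor:the-final-obstacle} via the $\h\nerve_h$-machinery. The paper's proof is exactly this argument, stated slightly more tersely.
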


Again the above obviously constitutes a natural transformation $\HOND\Rightarrow\HOinf\circ\nerve_f$. Hence we get:

\begin{thm}\label{thm:comp-result}
There exists a (up to isomorphism preferred) pseudonatural equivalence
\begin{equation*}
\HOcof\buildrel{\simeq\;}\over\Longrightarrow\HOinf\circ\nerve_f
\end{equation*}
of strict $1$-functors $\cat{COFCAT}\to\cat{PREDER}$.
\begin{proof}
By the above we have a zig-zag
\begin{equation*}
\HOcof\Rightarrow\HOD\Leftarrow\HOND\Rightarrow\HOinf\circ\nerve_f
\end{equation*}
of strictly natural equivalences; here the first equivalence was established in Proposition~\ref{prop:2-cells-natural}, the second one in Proposition~\ref{prop:hond-vs-hod}, and the last one in Proposition~\ref{prop:final-equivalence}.

The claim now follows from Lemma~\ref{lemma:der-quasi-inverse}.
\end{proof}
\end{thm}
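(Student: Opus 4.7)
My plan is to assemble the desired pseudonatural equivalence from the three intermediate comparisons built up in the previous subsections. Specifically, the paper has already constructed a zig-zag of strictly natural transformations of strict $1$-functors $\cat{COFCAT}\to\cat{PREDER}$:
\begin{equation*}
\HOcof\Longrightarrow\HOD\Longleftarrow\HOND\Longrightarrow\HOinf\circ\nerve_f,
\end{equation*}
given pointwise by the maps $p^*$ (Proposition~\ref{prop:2-cells-natural}), $\incl\circ\theta$ (Proposition~\ref{prop:hond-vs-hod}), and $\Phi$ (Proposition~\ref{prop:final-equivalence}). Each of these has already been shown to be a levelwise equivalence of prederivators, so the first step is just to record that the zig-zag exists, which requires no further work.

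The only remaining issue is that the middle arrow points in the wrong direction, so I have to invert it in the $2$-category of strict $1$-functors $\cat{COFCAT}\to\cat{PREDER}$ (with pseudonatural transformations as morphisms and modifications as $2$-cells). For this I apply Lemma~\ref{lemma:der-quasi-inverse} with $\mathscr C=\cat{COFCAT}$ and $\mathscr D=\cat{PREDER}$, viewing $\HOND,\HOD$ as strict $2$-functors and the transformation $\HOND\Rightarrow\HOD$ as strictly (hence in particular pseudo-)natural. Since the components are levelwise equivalences of prederivators, they are in particular equivalences inside the $2$-category $\cat{PREDER}$, so the lemma produces a pseudonatural equivalence $\HOD\Rightarrow\HOND$ together with invertible modifications to the identities. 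Pasting this quasi-inverse between the first and third legs of the zig-zag then yields the desired pseudonatural equivalence $\HOcof\Rightarrow\HOinf\circ\nerve_f$.

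I don't expect any serious obstacle here: all the substantive work, namely the (surprisingly delicate) verification of $2$-naturality of the individual comparison maps, has already been discharged in Propositions~\ref{prop:2-cells-natural},~\ref{prop:hond-vs-hod}, and~\ref{prop:final-equivalence}, and Lemma~\ref{lemma:der-quasi-inverse} does all the formal $2$-categorical bookkeeping required to turn a zig-zag of pseudonatural equivalences into a single one. The parenthetical ``up to isomorphism preferred'' in the statement is likewise covered by the second clause of Lemma~\ref{lemma:der-quasi-inverse}, which guarantees that any choice of levelwise quasi-inverses of $\incl\circ\theta$ determines the quasi-inverse pseudonatural transformation, and hence the composite equivalence, up to invertible modification.
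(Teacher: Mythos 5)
Your proposal is correct and matches the paper's own proof exactly: both assemble the zig-zag from Propositions~\ref{prop:2-cells-natural}, \ref{prop:hond-vs-hod}, and~\ref{prop:final-equivalence} and then invoke Lemma~\ref{lemma:der-quasi-inverse} to invert the wrong-way middle arrow and obtain the composite pseudonatural equivalence. Your additional remarks---that the components are equivalences in $\cat{PREDER}$ because they are levelwise equivalences, and that the ``up to isomorphism preferred'' clause comes from the second part of the lemma---just make explicit what the paper leaves implicit.
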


\section{Applications}
For the following result a different proof has been previously sketched in \cite{hoder-quasicat}.

\begin{cor}\label{cor:quasi-derivator}
Let $\mathscr{C}$ be a quasi-category. 
\begin{enumerate}
\item If $\mathscr C$ is cocomplete, then $\HOinf(\mathscr C)$ is a right derivator.
\item If $\mathscr C$ is complete, then $\HOinf(\mathscr C)$ is a left derivator.
\item If $\mathscr C$ is complete and cocomplete, then $\HOinf(\mathscr C)$ is a derivator.
\end{enumerate}
\end{cor}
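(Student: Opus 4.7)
My plan is to deduce all three parts from the main comparison result (Theorem~\ref{thm:comp-result}) together with Cisinski's Theorem~\ref{thm:cof-hoder} and Szumi\l{}o's equivalence Theorem~\ref{thm:frame-equiv}, with essentially no new work beyond invoking these inputs.

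For part (1), fix a cocomplete quasi-category $\mathscr C$. By Theorem~\ref{thm:frame-equiv} the functor $\nerve_f\colon\cat{COFCAT}\to\cat{QCAT}_!$ is a weak equivalence of fibration categories and so is, in particular, essentially surjective on homotopy categories. Hence there exist a cofibration category $\mathscr D$ and a zig-zag of weak equivalences in $\cat{QCAT}_!$ (i.e.\ of categorical equivalences of cocomplete quasi-categories) connecting $\nerve_f(\mathscr D)$ to $\mathscr C$. Applying $\HOinf$ and invoking Proposition~\ref{prop:hoinf-we} at each arrow of this zig-zag yields an equivalence of prederivators $\HOinf(\nerve_f(\mathscr D))\simeq\HOinf(\mathscr C)$. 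Combining with Theorem~\ref{thm:comp-result} gives a further equivalence $\HOinf(\mathscr C)\simeq\HOcof(\mathscr D)$, and the right-hand side is a right derivator by Theorem~\ref{thm:cof-hoder}. Since the property of being a right derivator is invariant under equivalence of prederivators (by the lemma following Example~\ref{ex:repr-derivator}), we conclude that $\HOinf(\mathscr C)$ is a right derivator as well.

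For part (2) I would dualise. From the simplicial identity $(\mathscr C^\op)^{\nerve I}=(\mathscr C^{\nerve I^\op})^\op$ and the compatibility of $\h$ with taking opposites one verifies that $\HOinf(\mathscr C^\op)$ is canonically isomorphic to the opposite prederivator $\HOinf(\mathscr C)^\op$ defined by $I\mapsto\HOinf(\mathscr C)(I^\op)^\op$. If $\mathscr C$ is complete then $\mathscr C^\op$ is cocomplete, so part (1) shows that $\HOinf(\mathscr C)^\op$ is a right derivator. Unravelling Definition~\ref{defi:derivator} (using that $u^\op\downarrow b\cong(b\downarrow u)^\op$ and that a left adjoint of the opposite of a functor is the opposite of a right adjoint) yields that this is exactly the assertion that $\HOinf(\mathscr C)$ is a left derivator. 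Part (3) then follows by combining (1) and (2).

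The only genuinely deep input here is Theorem~\ref{thm:frame-equiv}, which gives the essential surjectivity of $\nerve_f$ up to categorical equivalence onto cocomplete quasi-categories. Once this is granted, the entire argument is bookkeeping: passing the zig-zag of equivalences through the main comparison theorem and then formally dualising in order to treat completeness. I do not expect any step to present a serious obstacle; the work was really already done in proving Theorem~\ref{thm:comp-result}.
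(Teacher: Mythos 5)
Your proposal is correct and follows essentially the same route as the paper: obtain a cofibration category $\mathscr B$ with $\mathscr C\simeq\nerve_f(\mathscr B)$ from Theorem~\ref{thm:frame-equiv}, transfer along Proposition~\ref{prop:hoinf-we} and Theorem~\ref{thm:comp-result} to reduce to Theorem~\ref{thm:cof-hoder}, and then deduce (2) by duality and (3) by combining the two. Your write-up is merely more explicit than the paper's about the zig-zag realizing the essential surjectivity of $\nerve_f$ and about the identification $\HOinf(\mathscr C^\op)\cong\HOinf(\mathscr C)^\op$ underlying the duality step.
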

\begin{proof}
For the first statement we use Theorem~\ref{thm:frame-equiv} to get a cofibration category $\mathscr B$ such that $\mathscr C\simeq\nerve_f(\mathscr B)$ and then note $\HOinf(\mathscr C)\simeq\HOinf\big(\nerve_f(\mathscr B)\big)\simeq\HOcof(\mathscr B)$ by Proposition~\ref{prop:hoinf-we} and Theorem~\ref{thm:comp-result}. Hence the claim follows from Theorem~\ref{thm:cof-hoder}.

From this the second statement follows by duality and the third one is obviously a consequence of the other two.
\end{proof}

Next we want to prove that $\HOcof(\mathscr C)$ is strong for any ABC~cofibration category $\mathscr C$. For this we study the case of quasi-categories first:

\begin{lemma}\label{lemma:smothering-1-skeletal}
Let $\mathscr C$ be a quasi-category and let $K$ be a $1$-skeletal simplicial set. Then the forgetful map
\begin{equation}
\h\left({\mathscr C}^K\right)\to(\h\mathscr C)^{\h K}\label{eq:incoherent}
\end{equation}
is full and (essentially) surjective.
\begin{proof}
We first show surjectivity. For this note that we have pushout squares
\begin{equation*}
\begin{tikzcd}
\coprod_{i\in I}\partial\Delta^1 \arrow[r, "\coprod\textup{incl}"] \arrow[dr, phantom, "\ulcorner", very near end] \arrow[d] & \coprod_{i\in I}\Delta^1 \arrow[d]\\
\coprod_{j\in J}\Delta^0 \arrow[r] & K
\end{tikzcd}
\quad\text{and}\quad
\begin{tikzcd}
\coprod_{i\in I}\partial[1] \arrow[r, "\coprod\textup{incl}"] \arrow[dr, phantom, "\ulcorner", very near end] \arrow[d] & \coprod_{i\in I}[1] \arrow[d]\\
\coprod_{j\in J}[0] \arrow[r] & \h K
\end{tikzcd}
\end{equation*}
for some sets $I,J$: namely, the first pushout comes from the assumption on $K$ and the second one comes from the fact that $\h$ is a left adjoint. Now the objects of the left hand side of (\ref{eq:incoherent}) are precisely simplicial maps $K\to\mathscr C$ and the objects of the right hand side are functors $\h K\to\h\mathscr C$. Hence by the above pushouts it suffices to consider the special case $K=\Delta^1$ (the preimages will automatically fit together since the vertices of $\mathscr C$ are precisely the objects of $\h\mathscr C$). However, an object of $(\h\mathscr C)^{[1]}$ is a morphism in $\h\mathscr C$ which is represented by an edge of $\mathscr C$, cf.~e.g.~\cite[Proposition 1.2.3.9]{htt}.

For fullness we note that both $\blank\times\Delta^1$ and $\blank\times[1]$ are left-adjoints, hence we have pushouts
\begin{equation}\label{diag:prod}
\!\!
\begin{tikzcd}[column sep=scriptsize]
\coprod_{i\in I}(\partial\Delta^1)\times\Delta^1 \arrow[r] \arrow[dr, phantom, "\ulcorner", very near end] \arrow[d] & \coprod_{i\in I}\Delta^1\times\Delta^1 \arrow[d]\\
\coprod_{j\in J}\Delta^0\times\Delta^1 \arrow[r] & K\times\Delta^1
\end{tikzcd}
\;
\begin{tikzcd}[column sep=scriptsize]
\coprod_{i\in I}(\partial[1])\times[1] \arrow[r] \arrow[dr, phantom, "\ulcorner", very near end] \arrow[d] & \coprod_{i\in I}[1]\times[1] \arrow[d]\\
\coprod_{j\in J}[0]\times[1] \arrow[r] & (\h K)\times[1]
\end{tikzcd}
\!\!
\end{equation}
Now simplicial maps $K\times\Delta^1\to\mathscr C$ describe precisely the edges of ${\mathscr C}^K$ and functors $(\h K)\times[1]\to\h\mathscr C$ form precisely the morphisms in the right hand side. Now let $f,g\colon K\to\mathscr C$ be simplicial maps and let $\tau\colon \h f\Rightarrow\h g$ be a natural transformation. We choose for each object $x$ of $\h K$ an edge $e_x$ in $\mathscr C$ representing $\tau_x$. Since any $1$-simplex of ${\mathscr C}^K$ defines a morphism in $\h(\mathscr C^K)$ it suffices to extend these together with $f$ and $g$ to a simplicial map $K\times\Delta^1\to\mathscr C$. By the pushouts (\ref{diag:prod}) it suffices to consider the case $K=\Delta^1$ again (compatibility of the preimages is guaranteed because we lifted on the boundary \emph{before}). But this precisely means, that we have to show that a ($1$-dimensional) diagram
\begin{equation*}
\begin{tikzcd}
a \arrow[r, "f"]\arrow[d, "g"'] & b \arrow[d, "h"]\\
c \arrow[r, "i"'] & d
\end{tikzcd}
\end{equation*}
in $\mathscr{C}$ that commutes in $\h\mathscr{C}$ extends to a square in $\mathscr{C}$. Indeed, choose an edge $e$ in $\mathscr{C}$ representing $[h][f]=[i][g]$. Then we have by construction $2$-simplices $(f,h,e)$ and $(g,i,e)$ which precisely provide the desired extension.
\end{proof}
\end{lemma}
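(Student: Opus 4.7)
The plan is to handle essential surjectivity and fullness separately, reducing each to the special case $K = \Delta^1$ via the pushout description of $1$-skeletal simplicial sets (and its counterpart in $\cat{Cat}$).

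First, because $K$ is $1$-skeletal it fits into a pushout
\[
\textstyle\coprod_I \partial\Delta^1 \hookrightarrow \coprod_I \Delta^1,\ \coprod_I \partial\Delta^1 \to \coprod_J \Delta^0 \Longrightarrow K
\]
in $\cat{SSET}$, and since $\h\colon\cat{SSET}\to\cat{Cat}$ is a left adjoint the analogous pushout presents $\h K$ in $\cat{Cat}$. Objects of $\h(\mathscr C^K)$ are simplicial maps $K\to\mathscr C$ and objects of $(\h\mathscr C)^{\h K}$ are functors $\h K\to\h\mathscr C$; by the universal property of these pushouts both reduce to compatible families on the pieces, and vertices match automatically since $0$-simplices of $\mathscr C$ coincide with objects of $\h\mathscr C$. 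Essential surjectivity therefore reduces to the case $K=\Delta^1$, which is precisely the classical statement that every morphism of $\h\mathscr C$ is represented by an edge of $\mathscr C$ (\cite[Proposition 1.2.3.9]{htt}).

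For fullness, morphisms in $\h(\mathscr C^K)$ are represented by simplicial maps $K\times\Delta^1\to\mathscr C$ (edges of $\mathscr C^K$), and natural transformations in the right-hand side correspond to functors $(\h K)\times[1]\to\h\mathscr C$. Since $(\blank)\times\Delta^1$ and $(\blank)\times[1]$ are again left adjoints, the same pushout argument describes $K\times\Delta^1$ and $(\h K)\times[1]$ as pushouts of the corresponding products, reducing the lifting problem to the case $K=\Delta^1$ provided one first lifts the components $\tau_x$ of the natural transformation to edges of $\mathscr C$ on the vertices of $K$ (using the surjectivity just established). The remaining problem reads: given a $1$-dimensional square in $\mathscr C$ whose image in $\h\mathscr C$ commutes, extend it to a simplicial map $\Delta^1\times\Delta^1\to\mathscr C$ compatible with prescribed boundary edges.

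The main obstacle is this last step. To resolve it I would pick an edge $e$ in $\mathscr C$ representing the common composite in $\h\mathscr C$, and then use the definition of $\h\mathscr C$ (again via \cite[Proposition 1.2.3.9]{htt}) to produce two $2$-simplices, one witnessing each of the two factorisations, whose faces along $e$ glue to the desired map $\Delta^1\times\Delta^1\to\mathscr C$ (this is a standard triangulation of the square). The rest is bookkeeping to check that the pieces assemble correctly under the pushouts. Note that the statement does \emph{not} claim faithfulness, which is consistent with the general failure of underlying diagram functors to be faithful mentioned earlier in the paper.
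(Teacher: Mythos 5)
Your proposal is correct and follows essentially the same route as the paper's proof: the same pushout presentations of $K$, $\h K$, $K\times\Delta^1$, and $(\h K)\times[1]$ to reduce both essential surjectivity and fullness to the case $K=\Delta^1$, the same appeal to representability of morphisms in $\h\mathscr C$ by edges, and the same final step of gluing two $2$-simplices along an edge representing the common composite to fill the square $\Delta^1\times\Delta^1\to\mathscr C$.
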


In order to apply this to our case we will use:

\begin{lemma}\label{lemma:nh-unit}
Let $K$ be a $1$-skeletal simplicial set. Then the unit $K\to \nerve(\h K)$ is a categorical equivalence.
\begin{proof}
While the above statement is a rather direct consequence of the Quillen equivalence \cite[Theorem 2.2.5.1]{htt}, one can prove by more elementary means that the map in question is even inner anodyne, which we shall do now.

We first remark that edges $X\to Y$ in $\nerve(\h K)$, i.e.~morphisms $X\to Y$ in $\h K$, correspond bijectively to sequences
\begin{equation*}
\begin{tikzcd}
X = X_0 \arrow[r, "e_1"] & X_1 \arrow[r, "e_2"] & \cdots \arrow[r, "e_r"]& X_r = Y
\end{tikzcd}
\end{equation*}
of adjacent non-degenerate edges in $K$; we call $r$ the \emph{rank} of the edge. More generally let us define the rank of any $n$-simplex $\sigma$ to be the rank of its \emph{long edge} $\sigma|_{\Delta^{\{0,n\}}}$ (with the convention that $\sigma|_{\Delta^{\{0,n\}}}$ is the degenerate edge at $\sigma$ if $n=0$).
The unit $K\to\nerve(\h K)$ can then be identified with the inclusion of the simplicial subset of simplices of rank at most $1$.

Let us call an $n$-simplex $\sigma\in\nerve(\h K)_n$ \emph{primitive} if all the edges $\sigma|_{\Delta^{\{i,i+1\}}}$ are given by non-degenerate edges of $K$, i.e.~have rank $1$.

\begin{claim}
Let $\sigma\in\nerve(\h K)_r$ be a simplex of rank $n$. Then there exists a unique pair of a primitive $n$-simplex $\tau$ and a monotone map $f\colon [r]\to[n]$ such that $\sigma=f^*\tau$.
\begin{proof}
We begin by recalling that a simplex of positive dimension in the nerve of any category is uniquely characterized by its restrictions to the edges $\Delta^{\{i,i+1\}}$. It follows in our special case that any simplex of $\nerve(\h K)$ is uniquely characterized by its long edge and the ranks of each of the edges $\Delta^{\{i,i+1\}}$. In particular, a primitive simplex is uniquely characterized by its long edge; conversely, it is obvious that any morphism of $\h K$ appears as long edge of some primitive simplex.

We can now prove uniqueness. Let $\tau$ be any primitive $n$-simplex through which $\sigma$ factors. Since the long edge of $\sigma$ has rank $n$ it has to coincide with the long edge of $\tau$. Hence the above implies the uniqueness of $\tau$. Now let $g\colon[s]\to[n]$ be any monotone map and $0\le i\le j\le n$. Then $(g^*\tau)|_{\Delta^{\{i,j\}}}$ is easily seen to have rank $g(j)-g(i)$. Applying this to $f\colon[r]\to[n]$ with $\sigma=f^*\tau$ implies that $f(r)-f(0)=n$ which is only possible if $f(0)=0$. But then the same argument shows that $f(i)=f(i)-f(0)$ equals the rank of $\sigma|_{\Delta^{\{0,i\}}}$ also~for $i>0$, proving uniqueness of $f$.

For the existence proof the above dictates what to do: we take $\tau$ to be the unique primitive $n$-simplex whose long edge is the long edge of $\sigma$. We moreover define $f\colon[r]\to[n]$ via $f(i)=\text{rank of $\sigma|_{\Delta^{\{0,i\}}}$}$. Then one immediately sees that $f^*\tau$ and $\sigma$ have the same long edge and that for each $0\le i<n$ the restrictions $(f^*\tau)|_{\Delta^{\{i,i+1\}}}$ and $\sigma|_{\Delta^{\{i,i+1\}}}$ have the same rank. Accordingly the above implies $f^*\tau=\sigma$, finishing the proof of the claim.
\end{proof}
\end{claim}

Let us now define $K^{(n)}\subset\nerve(\h K)$ to be the simplicial subset of those simplices of rank at most $n$. Then we have
\begin{equation*}
K\cong K^{(1)}\subset K^{(2)}\subset\cdots\subset K^{(n)}\subset\cdots\subset\nerve(\h K)
\end{equation*}
and moreover obviously $\nerve(\h K)=\bigcup_{n\ge 1} K^{(n)}$. Accordingly it suffices to prove that each of the inclusions $K^{(n-1)}\to K^{(n)}$ is inner anodyne.

Denote by $X_n$ the set of primitive $n$-simplices of $K^{(n)}$. We now claim that the square
\begin{equation}\label{diag:pushout-primitive}
\begin{tikzcd}
X_n\times\Lambda^{\{1,\dots,n-1\}}[n]\arrow[d]\arrow[r] & X_n\times\Delta^n\arrow[d]\\
K^{(n-1)} \arrow[r] & K^{(n)},
\end{tikzcd}
\end{equation}
where the horizontal maps are the inclusions and the vertical maps are the tautological ones, is a pushout. Here $\Lambda^{\{1,\dots,n-1\}}[n]$ is one of the generalized inner horns in the sense of \cite[Section 2.2.1]{joyal}, namely the simplicial subset of $\Delta^n$ given by all simplices not containing the long edge.

Indeed, denote the pushout of the above span by $P$. Then the induced map $\alpha\colon P\to K^{(n)}$ is surjective: let $\sigma$ be any simplex of $K^{(n)}$. If it has rank strictly less than $n$, then $\sigma\in K^{(n-1)}$ by definition. Otherwise the claim provides us with a primitive $n$-simplex $\tau$ such that $\sigma$ is in the image of $\Delta^n_\tau$, the copy of $\Delta^n$ corresponding to $\tau\in X_n$.

But $\alpha$ is also injective: indeed, let $\sigma_1,\sigma_2\in P_m$ be such that $\alpha(\sigma_1)=\alpha(\sigma_2)$. If both of them lie in $K^{(n-1)}\subset P$, we are done because the restriction of $\alpha$ to $K^{(n-1)}$ is injective by construction. Accordingly we may assume without loss of generality that $\sigma_1\notin K^{(n-1)}$. 
 By construction of the pushout this means that there exists a $\tau_1\in X_n$ and a monotone map $f_1\colon[m]\to[n]$ such that $\sigma_1$ is given by $f_1^*\Delta^n_{\tau_1}$, and moreover $\sigma_1$ has to contain the long edge of $\Delta^n_{\tau_1}$. This implies that $\alpha(\sigma_1)$ contains the long edge of the primitive $n$-simplex $\tau_1$ of $K^{(n)}$. But $\alpha(\sigma_2)=\alpha(\sigma_1)$ and hence also $\alpha(\sigma_2)$ has to contain this edge. Since any edge of $K^{(n-1)}$ has rank strictly less than $n$, the same argument as above yields $\tau_2\in X_n$ and a monotone map $f_2\colon[m]\to[n]$ such that $\sigma_2$ is given by $f_2^*\Delta^n_{\tau_2}$. But then $f_1^*\tau_1=\alpha(\sigma_1)=\alpha(\sigma_2)=f_2^*\tau_2$ in $K^{(n)}$. Since $\alpha(\sigma_1)=\alpha(\sigma_2)$ is a simplex of rank $n$ and $\tau_1,\tau_2$ are primitive $n$-simplices, the claim allows us to conclude $\tau_1=\tau_2$ and $f_1=f_2$. It follows $\sigma_1=\sigma_2$ as desired.

Hence (\ref{diag:pushout-primitive}) is a pushout square. But its top map is inner anodyne by \cite[Proposition~2.12-(iv) and Theorem~2.17]{joyal} and hence so is $K^{(n-1)}\to K^{(n)}$ as a pushout of an inner anodyne map, finishing the proof.
\end{proof}
\end{lemma}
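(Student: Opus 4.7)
The plan is to establish the stronger claim that the unit map $\iota_K\colon K\to \nerve(\h K)$ is \emph{inner anodyne}, which immediately gives the categorical equivalence statement since inner anodyne maps are trivial cofibrations in the Joyal model structure on $\cat{SSET}$. The strategy is to filter $\nerve(\h K)$ by a sequence of subcomplexes starting at $K$ and show that each stage is added by attaching generalized inner horns.

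First I would exploit that $K$ being $1$-skeletal means there are no $2$-simplices to impose relations on $\h K$: it is canonically isomorphic to the free category on the quiver whose vertices are $K_0$ and whose edges are the non-degenerate $1$-simplices of $K$. Consequently every morphism of $\h K$ admits a canonical presentation as a finite sequence of composable non-degenerate edges of $K$, and I would define its \emph{rank} to be the length of this sequence (with degenerate edges of rank $0$). Extend this to higher simplices $\sigma\in \nerve(\h K)_n$ by setting $\mathrm{rank}(\sigma)\mathrel{:=}\mathrm{rank}(\sigma|_{\Delta^{\{0,n\}}})$. Let $K^{(n)}\subset \nerve(\h K)$ be the simplicial subset of simplices of rank at most $n$. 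One checks that $K\cong K^{(1)}$ (since rank-$1$ morphisms are exactly non-degenerate edges of $K$, and an $n$-simplex with all spine edges of rank $\le 1$ is in the image of $K$) and tautologically $\nerve(\h K)=\bigcup_n K^{(n)}$. Hence it suffices to prove each inclusion $K^{(n-1)}\hookrightarrow K^{(n)}$ is inner anodyne.

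The combinatorial heart of the argument is a unique factorization lemma for the \emph{primitive} $n$-simplices, i.e.~those $\tau$ whose spine edges $\tau|_{\Delta^{\{i,i+1\}}}$ all have rank exactly $1$: every rank-$n$ simplex $\sigma$ can be written uniquely as $f^*\tau$ with $\tau$ primitive of dimension $n$ and $f\colon[\dim\sigma]\to[n]$ monotone. For existence, take $\tau$ to be the unique primitive simplex with the same long edge as $\sigma$ (which exists because any morphism of $\h K$ is the long edge of some primitive simplex), and set $f(i)=\mathrm{rank}(\sigma|_{\Delta^{\{0,i\}}})$. Uniqueness uses that a simplex in a nerve is determined by its spine, combined with the additivity of rank under pullback along monotone maps. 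Once this is established, a simplex of $K^{(n)}\setminus K^{(n-1)}$ contains the long edge of a unique primitive $n$-simplex $\tau$, which yields the pushout square
\begin{equation*}
\begin{tikzcd}
X_n\times\Lambda^{\{1,\dots,n-1\}}[n]\arrow[d]\arrow[r] & X_n\times\Delta^n\arrow[d]\\
K^{(n-1)} \arrow[r] & K^{(n)},
\end{tikzcd}
\end{equation*}
where $X_n$ is the set of primitive $n$-simplices and $\Lambda^{\{1,\dots,n-1\}}[n]\subset\Delta^n$ is the generalized inner horn of simplices not containing the long edge.

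Since Joyal has shown that these generalized inner horn inclusions are inner anodyne, each $K^{(n-1)}\hookrightarrow K^{(n)}$ is inner anodyne as a pushout, and therefore so is the transfinite composition $\iota_K$, finishing the proof. I expect the main obstacle to be the uniqueness half of the factorization lemma: one needs careful bookkeeping to see that the rank decomposes additively under pullback, that a primitive simplex is rigidly pinned down by its long edge, and that these forces $f$ and $\tau$ to be unique. Once this combinatorial core is established, the assembly of the pushout and the appeal to Joyal's theorem are routine.
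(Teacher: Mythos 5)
Your proposal is correct and follows essentially the same route as the paper: the same rank filtration $K^{(n)}$ of $\nerve(\h K)$ exploiting that $\h K$ is free on the quiver of non-degenerate edges, the same unique factorization of rank-$n$ simplices through primitive $n$-simplices, the same pushout along $X_n\times\Lambda^{\{1,\dots,n-1\}}[n]\hookrightarrow X_n\times\Delta^n$, and the same appeal to Joyal's result that generalized inner horn inclusions are inner anodyne. No substantive differences to report.
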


\begin{prop}\label{prop:hoinf-strong}
Let $\mathscr{C}$ be a quasi-category. Then the prederivator $\HOinf(\mathscr{C})$ is strong.
\begin{proof}
Since $\HOinf(\mathscr{C})^A\cong\HOinf(\mathscr{C}^{\nerve A})$ for any small category $A$ it suffices to show that for each free category $F$ the underlying diagram functor $\diag\colon\HOinf(\mathscr{C})(F)\to\HOinf(\mathscr{C})(*)^F$ is full and essentially surjective. For this we observe that it factors as the composition
\begin{equation*}
\h\big(\mathscr{C}^{\nerve F})\to(\h\mathscr{C})^{\h\nerve F}\to (\h\mathscr{C})^F\cong\HOinf(\mathscr{C})(*)^F
\end{equation*}
where the left hand functor is the forgetful map and the middle map is given by restriction along the inverse of the counit of $\h\dashv\nerve$.

Now picking any isomorphism $\varphi\colon \h K\to F$ we get a commutative diagram
\begin{equation*}
\begin{tikzcd}
\h\big(\mathscr{C}^{\nerve F}\big) \arrow[d, "\cong", "(\nerve\varphi)^*"'] \arrow[r] \arrow[rr, "\diag", bend left=20] & (\h\mathscr{C})^{\h\nerve F} \arrow[r, "\cong"'] \arrow[d, "(\h\nerve\varphi)^*", "\cong"'] & \HOinf(\mathscr{C})(*)^F\\
\h\big(\mathscr{C}^{\nerve\h K}\big) \arrow[d,"\simeq", "\eta^*"'] & (\h\mathscr{C})^{\h\nerve\h K}\arrow[d, "(\h\eta)^*", "\simeq"']\\
\h\big(\mathscr{C}^K\big) \arrow[r] & (\h\mathscr{C})^{\h K}
\end{tikzcd}
\end{equation*}
where the lower vertical maps are equivalences by Lemma~\ref{lemma:nh-unit}. Since the lower horizontal map is full and essentially surjective by Lemma~\ref{lemma:smothering-1-skeletal}, the claim follows.
\end{proof}
\end{prop}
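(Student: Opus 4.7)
The plan is to reduce the strongness of $\HOinf(\mathscr C)$ to a concrete property of diagrams in $\mathscr C$ indexed by $1$-skeletal simplicial sets. First I would exploit the isomorphism $\mathscr{C}^{\nerve(A\times F)} \cong (\mathscr{C}^{\nerve A})^{\nerve F}$ together with the fact that $\mathscr{C}^{\nerve A}$ is again a quasi-category (a standard consequence of the lifting characterisation of quasi-categories) to fold $A$ into $\mathscr{C}$. This reduces the problem to showing that for every small free category $F$ the underlying diagram functor $\h(\mathscr{C}^{\nerve F}) \to (\h\mathscr{C})^F$ is full and essentially surjective. Writing $F \cong \h K$ for some $1$-skeletal $K$ (via the equivalent characterisations in the paper), this functor factors as $\h(\mathscr{C}^{\nerve F}) \to \h(\mathscr{C}^K) \to (\h\mathscr{C})^{\h K} \cong (\h\mathscr{C})^F$, where the first map is restriction along the unit $K \to \nerve\h K$ and the second is the canonical forgetful comparison.

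\emph{Step A.} For any quasi-category $\mathscr{D}$ and any $1$-skeletal simplicial set $K$, the forgetful map $\h(\mathscr{D}^K) \to (\h\mathscr{D})^{\h K}$ is full and essentially surjective. Both $K$ and $\h K$ are pushouts of the analogous spans built from $\coprod\Delta^0, \coprod\partial\Delta^1, \coprod\Delta^1$ respectively $\coprod[0], \coprod\partial[1], \coprod[1]$, so essential surjectivity reduces via these pushouts to $K = \Delta^1$, where it is the standard fact that every morphism in $\h\mathscr{D}$ lifts to an edge of $\mathscr{D}$. For fullness the same trick works after pairing with $\Delta^1$ respectively $[1]$, since $K\times\Delta^1$ admits the analogous pushout presentation; one is left with the problem of filling a square in $\mathscr{D}$ whose image in $\h\mathscr{D}$ commutes, which is done by picking any edge representing the common diagonal composition and invoking the $2$-simplex characterisation of composition in $\h\mathscr{D}$.

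\emph{Step B.} The unit $\eta\colon K \to \nerve\h K$ should induce an equivalence $\h(\mathscr{C}^{\nerve \h K}) \to \h(\mathscr{C}^K)$. This is the step I expect to be the main obstacle, and I would prove the stronger assertion that $\eta$ is inner anodyne. The idea is to filter $\nerve\h K$ by the sub-simplicial sets $K^{(n)}$ of simplices whose ``long edge'' $\Delta^{\{0,n\}}$ is represented by a composite of at most $n$ non-degenerate edges of $K$; thus $K = K^{(1)}$ and $\nerve\h K = \bigcup_n K^{(n)}$. Calling a simplex \emph{primitive} if each of its edges $\Delta^{\{i,i+1\}}$ arises from a non-degenerate edge of $K$, I would prove that every simplex of $\nerve\h K$ factors uniquely through a primitive simplex, and then identify each inclusion $K^{(n-1)} \hookrightarrow K^{(n)}$ as a pushout of the generalised inner horn inclusions $\Lambda^{\{1,\ldots,n-1\}}[n] \hookrightarrow \Delta^n$ indexed by the primitive $n$-simplices, which are inner anodyne by a result of Joyal. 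The combinatorial bookkeeping needed to verify that these are genuine pushouts (both surjectivity and injectivity of the natural map from the pushout) is the delicate technical heart of the argument.

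Given Steps A and B the proposition follows at once: the composite $\h(\mathscr{C}^{\nerve F}) \xrightarrow{\simeq} \h(\mathscr{C}^K) \to (\h\mathscr{D})^{\h K} \cong (\h\mathscr{C})^F$ is an equivalence followed by a full and essentially surjective functor, hence itself full and essentially surjective, which is exactly the strongness of $\HOinf(\mathscr C)$.
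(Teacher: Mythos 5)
Your proposal is correct and follows essentially the same route as the paper: the same reduction via $\HOinf(\mathscr{C})^A\cong\HOinf(\mathscr{C}^{\nerve A})$, the same factorization through $\h(\mathscr{C}^K)$ for $K$ $1$-skeletal, and the same two key lemmas (your Step A is the paper's Lemma~\ref{lemma:smothering-1-skeletal}, proved by the same pushout reduction to $\Delta^1$ and the same square-filling argument; your Step B is the paper's Lemma~\ref{lemma:nh-unit}, proved by the same rank filtration, primitive simplices, and pushouts of generalized inner horns). No substantive differences.
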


With this we can prove:

\begin{cor}\label{cor:hocof-strong}
Let $\mathscr{C}$ be an ABC cofibration category (for example a model category). Then the prederivator $\HOcof(\mathscr{C})$ is strong.
\begin{proof}
Corollary~\ref{cor:cof-approx} and Theorem~\ref{thm:comp-result} provide equivalences 
\begin{equation*}
\HOcof(\mathscr{C})\simeq\HOcof(\mathscr{C}_c)\simeq\HOinf\big(\nerve_f(\mathscr{C}_c)\big)
\end{equation*}
and hence the claim follows from Proposition~\ref{prop:hoinf-strong}.
\end{proof}
\end{cor}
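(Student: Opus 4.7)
The plan is to reduce this to the already established strongness of $\HOinf(\mathscr{D})$ for a quasi-category $\mathscr{D}$ (Proposition~\ref{prop:hoinf-strong}) by transporting the property across a chain of equivalences of prederivators.

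First I would pass from $\mathscr{C}$ to its full subcategory of cofibrant objects $\mathscr{C}_c$, which is an honest cofibration category (not merely an ABC cofibration category). By Corollary~\ref{cor:cof-approx} the inclusion $\mathscr{C}_c \hookrightarrow \mathscr{C}$ induces an equivalence $\HOcof(\mathscr{C}_c) \to \HOcof(\mathscr{C})$ of prederivators. Since the main comparison Theorem~\ref{thm:comp-result} is stated for cofibration categories, this reduction is exactly what is needed to access it.

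Next I would apply Theorem~\ref{thm:comp-result} to $\mathscr{C}_c$, producing an equivalence $\HOcof(\mathscr{C}_c) \simeq \HOinf(\nerve_f(\mathscr{C}_c))$ of prederivators. Since $\nerve_f(\mathscr{C}_c)$ is a (cocomplete) quasi-category by Theorem~\ref{thm:frame-equiv}, Proposition~\ref{prop:hoinf-strong} tells us that $\HOinf(\nerve_f(\mathscr{C}_c))$ is strong.

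Finally I would invoke the lemma (stated just after the definition of strongness) that strongness is invariant under equivalences of prederivators, and chase this back along the zig-zag
\begin{equation*}
\HOcof(\mathscr{C}) \simeq \HOcof(\mathscr{C}_c) \simeq \HOinf\bigl(\nerve_f(\mathscr{C}_c)\bigr)
\end{equation*}
to conclude that $\HOcof(\mathscr{C})$ is itself strong. There is no real obstacle here: all the hard work has been done in Theorem~\ref{thm:comp-result} and Proposition~\ref{prop:hoinf-strong}, and this corollary is essentially a formal assembly of the preceding results.
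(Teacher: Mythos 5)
Your proposal is correct and follows exactly the paper's own argument: reduce to $\mathscr{C}_c$ via Corollary~\ref{cor:cof-approx}, apply Theorem~\ref{thm:comp-result}, and transport strongness from $\HOinf\bigl(\nerve_f(\mathscr{C}_c)\bigr)$ (Proposition~\ref{prop:hoinf-strong}) back along the equivalences using the invariance lemma. No gaps.
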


Finally we note:

\begin{cor}\label{cor:we-diag}
The functor $\HOcof$ preserves and reflects weak equivalences of cofibration categories.
\begin{proof}
Weak equivalences are reflected by definition. For the second statement it suffices to note that both $\HOinf$ (by Proposition~\ref{prop:hoinf-we}) and $\nerve_f$ (as an exact functor) preserve weak equivalences, and then apply Theorem~\ref{thm:comp-result} again.
\end{proof}
\end{cor}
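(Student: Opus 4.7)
The plan is to bootstrap this corollary from the comparison result (Theorem~\ref{thm:comp-result}) together with the corresponding fact that $\HOinf$ is homotopical on quasi-categories. There are two directions to verify; neither should present real difficulty once Theorem~\ref{thm:comp-result} is available.

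For the reflection direction, the idea is that by the remark following Lemma~\ref{lemma:der-quasi-inverse}, a morphism of prederivators is an equivalence if and only if it is a levelwise equivalence. So if $\HOcof(F)$ is an equivalence, then in particular its value at the terminal category $*$ is an equivalence of categories. But $\HOcof(F)(*)=\Ho(F)$ by construction, so $F$ is a weak equivalence of cofibration categories directly from Definition~\ref{defi:we-cof-cat}.

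For the preservation direction, I would chase the pseudonatural equivalence $\HOcof\simeq\HOinf\circ\nerve_f$ from Theorem~\ref{thm:comp-result}. Given a weak equivalence $F\colon\mathscr{C}\to\mathscr{D}$, pseudonaturality implies that $\HOcof(F)$ is an equivalence of prederivators if and only if $\HOinf(\nerve_f(F))$ is. Now $\nerve_f\colon\cat{COFCAT}\to\cat{QCAT}_!$ is a weak equivalence of fibration categories by Theorem~\ref{thm:frame-equiv}, and as an exact functor of fibration categories it is in particular homotopical (the dual of Ken Brown's Lemma). Hence $\nerve_f(F)$ is a weak equivalence of quasi-categories, and then Proposition~\ref{prop:hoinf-we} yields that $\HOinf(\nerve_f(F))$ is an equivalence of prederivators, as required.

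There is no genuine obstacle to this argument: the comparison result does all the heavy lifting, and what remains is a two-step reduction (cofibration category $\to$ quasi-category via $\nerve_f$, then quasi-category $\to$ prederivator via $\HOinf$) using results that have already been established. The only thing to be slightly careful about is ensuring that the notion of ``weak equivalence of prederivators'' is unambiguous; here I simply take it to mean an equivalence in the $2$-category of prederivators, which by the comments after Lemma~\ref{lemma:der-quasi-inverse} is the same as a levelwise equivalence.
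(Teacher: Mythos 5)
Your argument is correct and follows essentially the same route as the paper: reflection comes for free from evaluating at $*$ and Definition~\ref{defi:we-cof-cat}, while preservation combines the exactness (hence homotopicity) of $\nerve_f$, Proposition~\ref{prop:hoinf-we}, and the pseudonatural equivalence of Theorem~\ref{thm:comp-result}. The only difference is that you spell out the dual Ken Brown step and the levelwise-equivalence criterion explicitly, which the paper leaves implicit.
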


We think that the above should have already been known before, but we do not know of an explicit reference. Here is a sketch of an alternative proof of the non-trivial part: by \cite[Corollaire 3.20${}^\op$]{approximation} the above is true when we restrict ourselves to finite direct categories as index categories and the proof actually works for finite direct categories with weak equivalences. In the presence of the additional two axioms it generalizes further to all small direct categories with weak equivalences. Now one can use Proposition~\ref{prop:cof-hoder-models}-(\ref{item:p-star}) to conclude as above.

\begin{rk}
Also the functor $\HOinf$ reflects equivalences between (arbitrary) quasi-categories, but the proof of this requires different means.
\end{rk}

\frenchspacing
\bibliographystyle{alpha}
\bibliography{literature.bib}
\end{document}